\documentclass{amsart}     
\usepackage{amssymb} 
\usepackage{amsmath,amscd,eucal,latexsym} 
\usepackage{graphicx}

\newtheorem{theorem}{Theorem}[section]
\newtheorem{lemma}[theorem]{Lemma}
\newtheorem{cor}[theorem]{Corollary}
\newtheorem{prop}[theorem]{Proposition}

\theoremstyle{definition} 
\newtheorem*{rem}{Remark}

\newtheorem*{conv}{Convention}
\newtheorem*{nota}{Notation}
\newtheorem{example}[theorem]{Example}
\newtheorem{defn}[theorem]{Definition}

\newtheorem*{*assump}{Assumption}
\newtheorem*{*hyp}{Temporary Hypothesis}

\DeclareMathOperator{\id}{id}

\newcommand{\bd}{\partial}

\newcommand{\sm}{\smallsetminus}

\newcommand{\GG}{\mathcal{G}}

\newcommand{\LL}{\mathcal{L}}

\newcommand{\Z}{\mathbb{Z}}

\newcommand{\D}{\mathbb{D}}

\renewcommand{\ss}{\subset}

\newcommand{\0}{\emptyset}

\newcommand{\wt}{\widetilde} 
\renewcommand{\tilde}{\wt}
\newcommand{\ol}{\overline} 
\newcommand{\wh}{\widehat}

\newcommand{\Si}{S^{1}_{\infty}} 
 
\newcommand{\Se}{S_{\infty}} 
 
\renewcommand{\tilde}{\wt}

\renewcommand{\phi}{\varphi}

\renewcommand{\epsilon}{\varepsilon}

\begin{document}

\title{Homeomorphisms of surfaces of finite type}

\author[J. Cantwell]{John Cantwell}
\address{Department of Mathematics\\ St. Louis University\\ St. 
Louis, MO 
63103}
\email{CANTWELLJC@SLU.EDU}


\subjclass{MSC 37E30}

\keywords{homeomorphism, lamination, hyperbolic, standard surface, surface of finite type}

\begin{abstract}
We   give a proof  of the Neilsen-Thurston classification theorem of a homeomorphism $f$ of a standard surface of finite type  as either periodic, pseudo-Anosov, or reducible.  In the periodic case, we show that there exists an integer $n>0$ such that $f$ is isotopic to $h$ with $h^{n}$  isotopic to the identity. This is the weaker version of the Nielsen-Thurston theorem.
\end{abstract}

\maketitle

\section{Introduction}

A surface $L$ is a \emph{standard surface} if $L$ admits a complete hyperbolic metric which contains no hyperbolic  halfplanes and in which all the boundary components are geodesic~\cite[Definition~2]{cc:epstein}. A surface is of \emph{finite type} if it is of finite genus and has at most finitely many punctures and circle boundary components. 

Throughout this paper $L$ is a fixed, standard hyperbolic surface of finite type and $f:L\to L$ is a fixed homeomorphism. Thus,  $L$ is assumed to have a complete hyperbolic structure, contains no hyperbolic halfplanes,  has finite genus, and at most finitely many cusps and geodesic circle boundary components.   We allow the surface $L$ to be nonorientable and the homeomorphism $f$ to be orientation reversing.

Let $\Gamma'$ be the collection of simple closed  geodesics $\gamma$  in $L\sm\bd L$ such that $f_{*}^{n}(\gamma) = \gamma$ for some $n>0$ (see Definition~\ref{fstar} for the definition of $f_{*}$). Let $\Gamma$ consist of those elements of $\Gamma'$ that are isolated in the sense that they are disjoint from all other elemants of $\Gamma'$. By the definition of $\Gamma$ and since $L$ has finite genus, $\Gamma$ consists of finitely many disjoint geodesics.  Since $f_{*}(\Gamma') = \Gamma'$, it follows that $f_{*}(\Gamma) = \Gamma$.

We prove, 

\begin{theorem}\label{NTthm}

If $L$ be a connected, standard surface  of finite type and $f:L\to L$ a homeomorphism, then there exists a homeomorphism $h:L\to L$ isotopic to $f$ such that either, 

\begin{enumerate}

\item  $h^{n}$ is isotopic to the identity for some integer $n>0$\upn{;}\label{NT1}

\item $h$ permutes a pair of transverse geodesic lamination $\Lambda_{\pm}$ which itersect every closed two sided  geodesic in $L\sm\bd L$ and are minimal in the sense that every leaf of each lamination is dense in the lamination\upn{;}\label{NT2}

\item If $S$ is the internal completion \emph{(Definition~\ref{intcompl})} of a component $U$ of $L\sm|\Gamma|$, then there is a least integer $n_{S}>0$ such that $h^{n_{S}}(S) = S$.  Further, $h^{n_{S}}|S:S\to S$ satisfies \emph{(\ref{NT1})} or \emph{(\ref{NT2})}.\label{part3}

\end{enumerate}

\end{theorem}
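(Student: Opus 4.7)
The plan is to use the finite invariant set $\Gamma$ to drive a reduction: if $\Gamma\neq\emptyset$, one proves conclusion~(\ref{part3}) and handles each piece by induction on topological complexity, while if $\Gamma=\emptyset$ one proves (\ref{NT1}) or (\ref{NT2}) directly on $L$. For the reduction, since $\Gamma$ is finite and $f_{*}$-invariant, some power $f_{*}^{M}$ fixes every element of $\Gamma$ individually; standard surface-topology arguments then allow one to isotope $f^{M}$ to a homeomorphism carrying each $\gamma\in\Gamma$ to itself, and I may assume the isotoped representative $h_{0}$ permutes the components $U$ of $L\sm|\Gamma|$. Given such $U$ with internal completion $S$ (Definition~\ref{intcompl}), the orbit length $n_{S}$ is finite, $h_{0}^{n_{S}}$ restricts to a self-homeomorphism of $S$, and by the design of internal completion $S$ is again a standard surface of finite type, so an induction on topological complexity is available. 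The key point is that the analog of $\Gamma$ for $(S,h_{0}^{n_{S}}|S)$ is empty: any periodic simple closed geodesic in $S$ that were isolated from the others would push forward to an element of $\Gamma'$ in $L$ disjoint from all other elements of $\Gamma'$, placing it in $\Gamma$ and contradicting the fact that we have already cut along all of $|\Gamma|$.

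It then remains to treat the base case $\Gamma=\emptyset$ on $L$. First suppose $\Gamma'\neq\emptyset$: non-isolation means every $\gamma\in\Gamma'$ is accumulated by other elements of $\Gamma'$, so the closure of $|\Gamma'|$ in $L\sm\bd L$ is a nonempty $f_{*}$-invariant geodesic lamination $\Lambda$; replacing $\Lambda$ by a minimal sublamination and producing a transverse partner (by extracting unstable directions along accumulated leaves, or by the limiting procedure described below) yields the pair $(\Lambda_{+},\Lambda_{-})$ of~(\ref{NT2}). Now suppose $\Gamma'=\emptyset$, so there are no periodic simple closed geodesics at all. Here I would invoke the classical Thurston limiting construction: fix a filling system of simple closed geodesics $\{\alpha_{i}\}$, pass to a projective limit of $[f^{n}\alpha_{i}]$ in the compact space $\PP\MM\LL(L)$ of projective measured laminations, and extract a projective class $[\Lambda_{+}]$ fixed by $f_{*}$, with an analogous class $[\Lambda_{-}]$ obtained from $f^{-n}$. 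If the associated transverse measures scale by a factor $\lambda>1$, the representative $h$ is pseudo-Anosov and (\ref{NT2}) holds; if no expansion occurs, a compactness and equicontinuity argument applied to the iterates yields a periodic representative and (\ref{NT1}) holds.

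The principal obstacle is the verification that the pair $(\Lambda_{+},\Lambda_{-})$ produced by the limiting procedure is transverse, minimal, and together fills $L$: one must exclude a shared leaf, a complementary region containing an essential simple closed curve, or a leaf of one lamination that fails to intersect some two-sided closed geodesic, since each of these events would produce, by iteration and passage to limits, an element of $\Gamma'$ and contradict the hypothesis $\Gamma=\emptyset$. A secondary subtlety is that $L$ is only assumed to be a standard surface of finite type --- possibly non-orientable, with cusps and geodesic circle boundary --- so the Thurston limiting machinery in $\PP\MM\LL(L)$ and the cutting-and-internal-completion step in the reduction must both be verified in this setting; the role of Definition~\ref{intcompl} is precisely to ensure that the standard-surface property descends to each piece, so that the induction on complexity is well-founded and the recursion terminates.
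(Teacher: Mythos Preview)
Your proposal contains a genuine gap rooted in a misreading of the definition of $\Gamma$. In the paper, ``isolated'' means \emph{disjoint from every other element of $\Gamma'$}, not ``not accumulated by other elements''. So when $\Gamma=\emptyset$ and $\Gamma'\neq\emptyset$, what you know is that every periodic simple closed geodesic \emph{crosses} some other one, not that each is a limit of others. Consequently your sentence ``the closure of $|\Gamma'|$ in $L\sm\bd L$ is a nonempty $f_{*}$-invariant geodesic lamination'' is false: the elements of $\Gamma'$ intersect one another, so their union (let alone its closure) is not a lamination, and the step ``replacing $\Lambda$ by a minimal sublamination'' has no meaning. This case therefore has no argument in your outline. (In fact, once the construction of Section~\ref{constrLambdapm} is in hand, Corollary~\ref{corcomp} shows that on a pseudo-Anosov piece \emph{every} simple closed geodesic has infinite $f_{*}$-orbit, so $\Gamma=\emptyset$ with $\Gamma'\neq\emptyset$ forces the piece to be periodic; but establishing that dichotomy is exactly the hard content you are trying to bypass.)

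More broadly, your strategy --- induct on complexity and, in the irreducible base case, appeal to Thurston's compactness of $\PP\MM\LL(L)$ --- is essentially the classical proof, and is quite different from what the paper does. The paper performs a single cut along $|\Gamma|$, with no induction, and on each pseudo-Anosov component builds $\Lambda_{\pm}$ \emph{directly} as Hausdorff limits of the iterates $f_{*}^{n}(\sigma)$ of a fixed simple closed geodesic (Definition~\ref{defnLL}, Theorem~\ref{proplams}). Transversality, minimality, and filling are obtained by elementary hyperbolic geometry: analysis of the induced dynamics of $\wh f_{*}^{p}$ on the circle at infinity (Lemma~\ref{perleaf}, Proposition~\ref{bothsides}) and the combinatorics of crown sets, rather than by measured-lamination or $\PP\MM\LL$ arguments. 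The map $h$ is then manufactured leafwise \`a la Casson--Bleiler. Your proposal would, even if repaired, import substantial outside machinery (compactness of $\PP\MM\LL$, structure of fixed projective classes, the expansion dichotomy) that the paper deliberately avoids, and whose validity in the nonorientable, bounded, cusped setting you flag but do not verify.
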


If $L$ is orientable,  then this is~\cite[Theorem~0.2]{ha:th}. We do not prove~\cite[Theorem~0.1]{ha:th} whose proof  uses different  ideas. We concentrate on constructing the laminations $\Lambda_{\pm}$. Our proof borrows  from Handel and Thurston~\cite{ha:th} and Casson and Bleiler~\cite{bca} but is substantially different from either proof. 

\begin{rem}

If $S$ is the internal completion of a component $U$ of $L\sm|\Gamma|$, then $\bd S = S\sm U$  consists of geodesics. The inclusion map $U\to L$ extends in a natural way to an immersion $S\to L$  that sends these geodesics to geodesics in $\Gamma$. This immersion may send two geodesics in $\bd S$ to the same geodesic in $\Gamma$ or, if the image geodesic in $\Gamma$ is one sided, it will be the two-to-one image of a geodesic in $\bd S$. 

\end{rem}

\begin{rem}[continued]

In Theorem~\ref{NTthm}~(\ref{part3}) we abuse notation and denote the natural extension of $h|U$ to $S$ again by $h$.

\end{rem}

\begin{defn}\label{supp}

If $\GG$ is a set of  geodesics in $L$, we let $|\GG|$ denote the \emph{support} of $\GG$, that is the union of the elements of $\GG$.

\end{defn}

\begin{defn}

If $U$ is   a component of $L\sm|\Gamma|$ which contains a simple closed geodesic $\sigma$ such that the geodesics $f_{*}^{n}(\sigma)$, $n\in\Z$, are distinct, then $U$ is called a \emph{pseuso-Anosov component}. Otherwise $U$ is called a \emph{periodic component}.

\end{defn}

Here $f_{*}$ is the geodesic tightening map (Definition~\ref{fstar}).

\begin{rem}

Since $f_{*}$ permutes the elements of $\Gamma$  we can extend the domain of definition of $f_{*}$ to  the components of $L\sm|\Gamma|$ so that $f_{*}$ permute the components of $L\sm|\Gamma|$. 

\end{rem}

\begin{conv}

If $\gamma$ a geodesic in $\Delta$ and $\{\gamma_{n}\}_{n\ge0}$ a sequence of geodesics in $\Delta$ we  use the notation $\gamma_{n}\to\gamma$ as $n\to \infty$ for uniform convergence in the Euclidean metric on $\Delta$.

\end{conv}

\begin{defn}\label{defnLL}

Suppose $U$ is a pseudo-Anosov component  of $L\sm|\Gamma|$ and $\sigma\ss U\sm\bd L$ is a simple closed  geodesic. Define $\LL^{\sigma}_{+}$ (respectively $\LL^{\sigma}_{-}$) to be the set of simple geodesics $\gamma\ss U$ such that for every integer $p>0$,  there exists a subsequence $\{\sigma_{n_{k}p}\}_{k\ge 0}$ of $\{\sigma_{n}\}_{n\ge 0}$ (respectively $\{\sigma_{n}\}_{n\le 0}$) such that for any lift $\wt\gamma$ of $\gamma$ there exists a lift $\{\wt\sigma_{n_{k}p}\}_{k\ge 0}$ of $\{\sigma_{n_{k}p}\}_{k\ge 0}$ such that $\wt \sigma_{n_{k}p}\to\wt\gamma$ as $k\to\infty$. 

\end{defn}

\begin{rem}

The full strength of Definition~\ref{defnLL} is needed in the proof of Lemma~\ref{perleaf}.

\end{rem}

In Section~\ref{constrLambdapm} we prove, 

\begin{theorem}\label{proplams}

If $U$ is a pseudo-Anosov component  of $L\sm|\Gamma|$ and $\sigma\ss U\sm\bd L$ is a simple closed  geodesic, then $\sigma_{n} = f_{*}^{n}(\sigma)$, $n\in\Z$, are distinct, $\0\ne|\LL^{\sigma}_{\pm}|\ss U$,  and the geodesics in $\LL^{\sigma}_{\pm}$ are  homeomorphic to the reals.   Further,

\begin{enumerate}

\item There exists a unique minimal geodesic laminations $\Lambda_{\pm}\ss\LL^{\sigma}_{\pm}$\upn{;}

\item $\Lambda_{+}$ and $\Lambda_{-}$ are transverse\upn{;}

\item The components of $U\sm(|\Lambda_{+}|\cup|\Lambda_{-}|)$ consist  of open rectangles and finitely many open disks and/or open or half open annuli\upn{;}

\item The laminations $\Lambda_{\pm}$ are independent of the choice of $\sigma$\upn{;}

\item $\LL^{\sigma}_{\pm}\sm\Lambda_{\pm}$ are each finite sets\upn{;}


\end{enumerate}

\end{theorem}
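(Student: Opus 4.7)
The plan is to exploit compactness of the space of geodesics in the universal cover $\D$ to extract limits of the orbit $\{\sigma_n\}_{n\in\Z}$ under the geodesic tightening $f_*$, and to verify that these limits assemble into two transverse minimal laminations with the desired structure.

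\textbf{Existence of $\LL^\sigma_\pm$.} Distinctness of the $\sigma_n$ is built into the definition of a pseudo-Anosov component, and since $L$ contains only finitely many closed geodesics of length at most any given bound, $\ell(\sigma_n)\to\infty$. Fix a lift $\wt\sigma_0$ of $\sigma$ and choose lifts $\wt\sigma_n$ of $\sigma_n$ meeting a fixed compact fundamental set in $\D$. By compactness of $\Se\x\Se$ minus the diagonal together with a diagonal extraction in $p$, for every $p>0$ there is a subsequence $\{\sigma_{n_kp}\}_{k\ge0}$ whose chosen lifts converge uniformly in $\oD$ to a complete geodesic $\wt\gamma$; varying the initial lift by deck transformations shows every lift of $\gamma$ arises this way. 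The projection $\gamma\ss L$ is simple, because a transverse self-intersection of $\gamma$ would lift to a transverse crossing of two distinct translates of some $\wt\sigma_{n_kp}$, contradicting simplicity of $\sigma_{n_kp}$. Thus $\gamma\in\LL^\sigma_+$, giving $|\LL^\sigma_+|\ne\0$, and the same construction with negative iterates produces $\LL^\sigma_-$. The inclusion $|\LL^\sigma_\pm|\ss U$ follows because each $\sigma_n\ss U$ and any crossing of $\gamma$ with some $\delta\in\Gamma$ would be approximated by an impossible crossing of $\sigma_{n_kp}$ with $\delta$. A shadowing argument rules out closed leaves: were $\gamma$ closed, the stabilizer of $\wt\gamma$ would produce infinitely many distinct lifts of $\sigma_{n_kp}$ crowded near $\wt\gamma$, incompatible with simplicity of $\sigma_{n_kp}$ and $\ell(\sigma_{n_kp})\to\infty$. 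Hence each $\gamma\in\LL^\sigma_\pm$ is homeomorphic to $\R$.

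\textbf{Minimal sublaminations and transversality.} For any $\gamma,\gamma'\in\LL^\sigma_+$, Definition~\ref{defnLL} applied to a common $p$ yields a single subsequence $\{\sigma_{n_kp}\}$ with lifts simultaneously approximating $\wt\gamma$ and $\wt\gamma'$; since distinct lifts of a simple closed geodesic are disjoint, $\wt\gamma$ and $\wt\gamma'$ cannot cross transversely. Hence the closure of $\LL^\sigma_+$ is a geodesic lamination in $U$, and Zorn's lemma produces a minimal sublamination $\Lambda_+$. Uniqueness of $\Lambda_+$ follows from the dynamical fact that every minimal sublamination of $\LL^\sigma_+$ coincides with the common Hausdorff attractor of the forward iterates $\sigma_n$. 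The backward construction yields $\Lambda_-\ss\LL^\sigma_-$. Transversality of $\Lambda_+$ and $\Lambda_-$ comes from a dynamical asymmetry: a common leaf would be approximated by $\sigma_n$ both as $n\to+\infty$ and as $n\to-\infty$, forcing the closed-leaf behavior ruled out above.

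\textbf{Complementary regions, independence, finiteness.} Parts~(3)--(5) follow from the structure theory of two transverse minimal geodesic laminations on a finite-type surface. An Euler characteristic count decomposes $U\sm(|\Lambda_+|\cup|\Lambda_-|)$ into open rectangles together with finitely many principal regions (open disks or open/half-open annuli), giving~(3). For~(4), another simple closed geodesic $\sigma'\ss U\sm\bd L$ with infinite $f_*$-orbit produces $\Lambda'_\pm$; both $\Lambda_\pm$ and $\Lambda'_\pm$ are minimal sublaminations of the common forward/backward attractor of $f_*$-iterates of any simple closed geodesic in $U$, forcing $\Lambda_\pm=\Lambda'_\pm$. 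For~(5), each leaf of $\LL^\sigma_\pm\sm\Lambda_\pm$ lies in a complementary component of $|\Lambda_\pm|$ in $U$, and finiteness of principal regions forces $\LL^\sigma_\pm\sm\Lambda_\pm$ to be finite. The hardest steps will be establishing transversality of $\Lambda_+$ and $\Lambda_-$, proving uniqueness of the minimal sublamination, and ruling out closed leaves in $\LL^\sigma_\pm$; each requires a careful interplay between Hausdorff convergence of lifts in $\oD$ and the behavior of $f_*$ on long orbits of simple closed geodesics.
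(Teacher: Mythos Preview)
Your outline identifies the right hard points but omits the mechanism that actually resolves them, and one of your sketched arguments is incorrect as stated.

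The ``shadowing argument'' for ruling out closed leaves does not work. If $\gamma\in\LL^\sigma_+$ were closed and $T$ generates the stabilizer of $\wt\gamma$, then the translates $T^j(\wt\sigma_{n_kp})$ are indeed many lifts of $\sigma_{n_kp}$ near $\wt\gamma$, but nothing prevents them from being pairwise disjoint; a simple closed curve can wind arbitrarily many times near $\gamma$ without self-intersecting. What the paper does instead (Section~\ref{simplegeo}) is show, via the perpendicular-projection estimate of Proposition~\ref{spirals}, that the $\sigma_{n_k}$ must eventually cross $\gamma$ transversely, then use a counting argument on $\#(\sigma_{n_k}\cap\sigma_{n_k+r})$ to conclude $f_*^r(\gamma)=\gamma$ for some $r$, and finally (Corollary~\ref{transto}) that any closed geodesic transverse to $\gamma$ has infinite $f_*$-orbit. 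This forces $\gamma\in\Gamma$, contradicting $\sigma\ss U$. The same machinery is what proves that \emph{every} simple closed geodesic in $U\sm\bd L$ has distinct iterates (Corollary~\ref{corcomp}); this is not built into the definition of a pseudo-Anosov component, which only asserts the existence of one such geodesic.

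More seriously, your sketches for transversality and uniqueness are placeholders for an argument that is absent. The paper's engine is Lemma~\ref{perleaf} and Proposition~\ref{bothsides}: one lifts a semi-isolated leaf $\lambda\in\Lambda^\sigma_+$ and analyzes the fixed points of $\wh f_*^p$ on $\Se$, proving that the endpoints $a_0,a_1$ of $\wt\lambda$ are attracting and that each complementary arc contains exactly one repelling fixed point $b_i$. The geodesics joining consecutive $b_i$'s are then shown (Proposition~\ref{neglam}) to be leaves of a minimal lamination in $\LL^\sigma_-$; this is how $\Lambda^\sigma_-$ is pinned down relative to $\Lambda^\sigma_+$, and transversality (Corollary~\ref{trlams}) and uniqueness (Proposition~\ref{lamuniq}) follow because the attracting/repelling dichotomy on $\Se$ prevents a leaf from belonging to both laminations and forces any candidate minimal lamination to contain the specific leaf $\gamma^+_1$. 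Your assertion that a common leaf would be ``approximated by $\sigma_n$ both as $n\to+\infty$ and as $n\to-\infty$, forcing closed-leaf behavior'' has no content without this fixed-point analysis; a non-closed geodesic can perfectly well be a Hausdorff limit from both directions. Until you supply an argument at the level of the boundary dynamics of $\wh f_*^p$, the transversality and uniqueness claims remain open.
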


\section{Definitions and preliminary results}\label{defres}

We take   the open unit disk $\Delta$ with the Poincar\'e metric as hyperbolic plane. The closed unit disk $\D^{2} = \Delta\cup\Si$, where the  unit circle $\Si$ is called the \emph{circle at infinity}.

Throughout, $L$ is a connected, standard surface  of finite type. We fix a lift $\wt L$ to $\Delta$ and let $\Se = \bd\wt L\ss\D^{2}$. We denote a lift of $f:L\to L$ by $\wt f:\wt L\to\wt L$.  We denote by $\wh L$ the closure of $\wt L$ in $\D^{2}$ and refer to $E = \wh L\cap\Si$ as the \emph{ideal boundary} of $L$. By~\cite[Theorem~2]{cc:epstein}, the map   $\wt f:\wt L\to\wt L$ has a natural extension $\wh f:\wh L\to\wh L$.

\subsection{Surfaces of finite type}\label{fintype}

The following propositions are well known result in hyperbolic geometry.

\begin{prop}\label{areahypsur}

If $L$ is a standard hyperbolic of finite type with $c\ge0$ cusps, $m\ge0$ cross caps, $b\ge 0$ circle boundary components, and $g\ge 0$ handles, then $${\rm area}\,(L) = 2\pi(c+m+b+2g-2).$$

\end{prop}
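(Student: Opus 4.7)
The plan is to derive the formula as a direct application of the Gauss--Bonnet theorem, with the noncompactness caused by cusps handled through a standard truncation argument. First I would exhaust $L$ by compact subsurfaces $L_{\tau}$ obtained by cutting off each cusp along a horocycle at depth $\tau$. Each cusp neighborhood deformation retracts onto its bounding horocycle, so $\chi(L_{\tau}) = \chi(L)$, and $\text{area}(L_{\tau})\nearrow\text{area}(L)$ as $\tau\to\infty$.

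Next I would apply Gauss--Bonnet to $L_{\tau}$:
\[
\int_{L_{\tau}} K\,dA \;+\; \int_{\bd L_{\tau}} \kappa_{g}\,ds \;=\; 2\pi\chi(L_{\tau}).
\]
Since $L$ is standard hyperbolic, $K\equiv -1$, so the first term equals $-\text{area}(L_{\tau})$. The boundary $\bd L_{\tau}$ consists of the $b$ geodesic boundary circles of $L$ (which contribute $0$ to the geodesic-curvature integral) together with the $c$ truncating horocycles. In the standard Fuchsian model of a cusp as the quotient of a horoball in $\Delta$ by a parabolic isometry, these horocycles have constant geodesic curvature $\pm 1$ and length proportional to $e^{-\tau}$, so their total contribution is $O(e^{-\tau})$ and vanishes as $\tau\to\infty$.

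Passing to the limit yields $-\text{area}(L) = 2\pi\chi(L)$. By the classification of surfaces, a connected surface with $g$ handles, $m$ cross caps, $b$ boundary circles, and $c$ punctures has Euler characteristic $\chi(L) = 2 - 2g - m - b - c$ (obtained from a sphere by adding each feature, which decreases $\chi$ by $2$, $1$, $1$, and $1$ respectively). Substituting and rearranging produces the claimed formula $\text{area}(L) = 2\pi(c + m + b + 2g - 2)$.

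The only real subtlety is verifying the asymptotic vanishing of the horocycle boundary integrals, but this is immediate from the explicit local geometry of a hyperbolic cusp; the remainder is bookkeeping with the Euler characteristic.
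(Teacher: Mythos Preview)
Your proof is correct; this is the standard Gauss--Bonnet argument for complete finite-area hyperbolic surfaces. The paper does not actually prove this proposition---it is listed among ``well known results in hyperbolic geometry'' and stated without proof---so there is no paper argument to compare against. Your truncation-and-limit approach, with the horocycle boundary contributions vanishing as $O(e^{-\tau})$ and the Euler characteristic bookkeeping $\chi(L)=2-2g-m-b-c$, is exactly the conventional derivation.
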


\begin{rem}

By~\cite[Theorem~8]{cc:epstein}, if $L$ is a standard hyperbolic surface,  then $$c+m+b+2g\ge3.$$

\end{rem}
 
\begin{prop}

Each cusp of $L$ has a neighborhood such that no simple closed geodesic in $L$ meets that neighborhood.\label{cuspnhb}

\end{prop}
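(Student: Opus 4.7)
The plan is to work in the upper half plane model of the hyperbolic plane, placing the cusp under consideration at $\infty$ and arranging by conjugation that the maximal parabolic subgroup of the deck group of $\wt L\to L$ fixing $\infty$ is generated by $p\colon z\mapsto z+1$. The horoballs $H_h=\{\operatorname{Im} z>h\}$ project, for $h$ sufficiently large, to standard horocyclic cusp neighborhoods $N_h\ss L$. I will show that $N_h$ is disjoint from every simple closed geodesic in $L$ as soon as $h>1/2$.

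The general fact I need is that if $\gamma\ss L$ is a simple closed geodesic, then the distinct components of the preimage of $\gamma$ in $\wt L$ are pairwise disjoint complete geodesics. Indeed, $\gamma$ is an embedded $1$-submanifold of $L$ and $\wt L\to L$ is a local homeomorphism, so the preimage is a $1$-submanifold of $\wt L$ whose components are complete geodesics in $\Delta$; any two distinct complete geodesics in $\Delta$ meet in at most one point, which forces disjointness of distinct components.

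Now suppose $\gamma$ is a simple closed geodesic meeting $N_h$, and let $\wt\gamma$ be a lift meeting $H_h$. Since $\gamma$ is closed, $\wt\gamma$ has no endpoint at the cusp $\infty$, so $\wt\gamma$ is a Euclidean semicircle of some radius $R$ centred on the real axis, and attains maximum height $R$. The image $p(\wt\gamma)=\wt\gamma+1$ is another lift of $\gamma$; it is distinct from $\wt\gamma$ because the stabilizer of $\wt\gamma$ in the deck group is generated by the hyperbolic element whose axis is $\wt\gamma$, which contains no parabolic element. The disjointness of distinct lifts therefore gives $\wt\gamma\cap(\wt\gamma+1)=\0$, and an elementary Euclidean calculation on the two semicircles forces $R\le 1/2$. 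Hence $\wt\gamma$ cannot enter $H_h$ for any $h>1/2$, a contradiction.

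The heart of the argument is the disjointness of lifts of a simple curve together with the translation bound $R\le 1/2$. The main bookkeeping obstacle is to confirm that the cusp subgroup really does contain the parabolic $p$ of translation length one even when $L$ is nonorientable (where the full cusp stabilizer might include an orientation-reversing glide), so that the normalization $p\colon z\mapsto z+1$ is legitimate and the horoballs $H_h$ project injectively into $L$ for all $h$ sufficiently large.
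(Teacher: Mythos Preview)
Your argument is correct and is the standard proof of this fact. The paper itself gives no proof: Proposition~\ref{cuspnhb} is stated alongside Proposition~\ref{areahypsur} under the heading ``The following propositions are well known result[s] in hyperbolic geometry'' and is simply invoked later. So there is nothing in the paper to compare against beyond noting that you have supplied what the paper omits.

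Two minor remarks. First, your quantitative claim that $N_h$ avoids all simple closed geodesics for every $h>1/2$ is stronger than what the proposition asks and requires that $H_h$ actually embed in $L$; you acknowledge this in your final paragraph, and since the proposition only asks for \emph{some} neighborhood, taking $h$ large enough to guarantee both the embedding and $h>1/2$ suffices. Second, the step ``since $\gamma$ is closed, $\wt\gamma$ has no endpoint at $\infty$'' is correct but rests on the (standard) fact that in a discrete group a hyperbolic element and a parabolic element cannot share a fixed point; it would not hurt to say this explicitly, especially since the paper allows $L$ to be nonorientable and to have geodesic boundary (the boundary lifts are also axes of hyperbolic elements, so the same reasoning keeps them away from $\infty$ and your horoball picture near the cusp is unaffected).
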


\subsection{Pseudo-geodesics and geodesic tightening}

\begin{defn}[pseudo-geodesic]\label{pseudogeodesic}
A curve  $\gamma\ss L$ is a  \emph{pseudo-geodesic} if either some (hence every) lift $\wt\gamma$ has two distinct, well defined endpoints on $\Si$  or $\gamma$ is a properly embedded, boundary incompressible  compact arc.
\end{defn}

Remark that geodesics and  essential embedded circles in $L$ that do not bound cusps   are pseudo-geodesics and that if $f$ is continuous and $\gamma$ is a geodesic, then by~\cite[Theorem~2]{cc:epstein}  $f(\gamma)$ is a pseudo-grodesic.

\begin{defn}

If $\gamma$ is a pseudo-geodesic, then the \emph{geodesic tightening} of $\gamma$ is the unique geodesic whose lifts have the same endpoints on $\Se$ as the lifts of $\gamma$.

\end{defn}

\begin{defn}\label{fstar}

If $\gamma$ is a geodesic, we will demote the geodesic tightening of $f(\gamma)$ by $f_{*}(\gamma)$. If $\GG$ is a set of geodesics, we define
$$f_{*}(\GG) = \{f_{*}(\gamma)\ |\ \gamma\in\GG\}.$$

\end{defn}

The next lemma follow immediately from~\cite[Theorem~2]{cc:epstein}. 

\begin{lemma}\label{f*seq}

If $\gamma_{k}$, $k\ge0$, and $\gamma$ are geodesics with lifts $\wt\gamma_{k}$, $k\ge0$, and $\wt\gamma$ such that $\wt\gamma_{k}\to\wt\gamma$ as $k\to\infty$ and $\wt f_{*}$ is a lift of $f_{*}$, then $\wt f_{*}(\gamma_{k})\to\wt f_{*}(\wt\gamma)$ as $k\to\infty$.

\end{lemma}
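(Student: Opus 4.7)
The plan is to unpack the definition of $\wt f_{*}$ via the extension $\wh f:\wh L\to\wh L$ from~\cite[Theorem~2]{cc:epstein}, then use continuity of $\wh f$ on $\Se$ together with the elementary fact that convergence of endpoints on $\Si$ implies uniform Euclidean convergence of the corresponding geodesics.

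First I would record the description of $\wt f_{*}$. A geodesic $\alpha$ in $L$ lifts to a collection of geodesics in $\wt L$, each having two well-defined endpoints on $\Se$. If $\wt\alpha$ is such a lift with endpoints $p,q\in\Se$, then $\wt f_{*}(\alpha)$ is, by definition of the tightening together with the chosen lift $\wh f$, the geodesic in $\Delta$ joining $\wh f(p)$ and $\wh f(q)$; this makes sense because by~\cite[Theorem~2]{cc:epstein} the map $\wh f$ is a homeomorphism, so $\wh f(p)\ne\wh f(q)$.

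Next, from $\wt\gamma_{k}\to\wt\gamma$ uniformly in the Euclidean metric on $\Delta$, the endpoints $p_{k},q_{k}\in\Se$ of $\wt\gamma_{k}$ satisfy $p_{k}\to p$ and $q_{k}\to q$, where $p,q\in\Se$ are the endpoints of $\wt\gamma$. By continuity of $\wh f:\wh L\to\wh L$ asserted by~\cite[Theorem~2]{cc:epstein}, we get $\wh f(p_{k})\to\wh f(p)$ and $\wh f(q_{k})\to\wh f(q)$ in $\Si$. Since $\wh f(p)\ne\wh f(q)$, for all sufficiently large $k$ the pairs $(\wh f(p_{k}),\wh f(q_{k}))$ stay uniformly bounded away from the diagonal of $\Si\x\Si$.

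Finally, I would invoke the standard fact that the map sending an ordered pair of distinct points of $\Si$ to the (Euclidean closure of the) hyperbolic geodesic in $\oD$ joining them is continuous with respect to uniform Euclidean convergence, provided the pairs stay off the diagonal. Applying this to the pairs $(\wh f(p_{k}),\wh f(q_{k}))\to(\wh f(p),\wh f(q))$ yields $\wt f_{*}(\gamma_{k})\to\wt f_{*}(\gamma)$ in the Euclidean metric on $\Delta$, as required. There is no real obstacle here; the only slightly delicate point is verifying that the image endpoints remain distinct in the limit, which is immediate from injectivity of $\wh f$.
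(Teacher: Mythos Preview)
Your argument is correct and is exactly the approach the paper has in mind: the paper simply states that the lemma follows immediately from~\cite[Theorem~2]{cc:epstein}, and your proposal is the natural unpacking of that reference via continuity of $\wh f$ on $\Se$ together with continuous dependence of geodesics on their ideal endpoints.
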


\subsection{Laminations}

\begin{defn}

A \emph{lamination} $\Lambda$ is a set of simple geodesics in $L$ whose support $|\Lambda|$  is closed in $L$.  A geodesic in $\Lambda$ is called a \emph{leaf} of $\Lambda$.

\end{defn}

\begin{defn}

A nonempty lamination which properly contains no other lamination is said to be \emph{minimal}.

\end{defn}

\begin{defn}[semi-isolated]\label{defnsemiis}
If a leaf $\lambda\in\Lambda$ is approached by points of  $|\Lambda|$ on at most one side, we say $\lambda$ is \emph{semi-isolated}.
\end{defn}

\begin{rem}

Note that our definition of semi-isolated includes all isolated leaves.  

\end{rem}

\subsection{Internal completion}

Let $\Lambda$ be a lamination in $L$. A component $V$  of $L\sm|\Lambda|$ has two different hyperbolic metrics, the first defined by geodesics of $L$ and the second defined by segments of  geodesics of $L$ contained in $V$.

\begin{defn}\label{intcompl}

Define the \emph{internal completion} of a component $V$ of $L\sm|\Lambda|$ to be the metric completion of $V$ in the metric defined  by segments of  geodesics of $L$ contained in $V$.

\end{defn}

\begin{defn}

If $V$ is a component of $L\sm|\Lambda|$, define a \emph{border component} of $v$  to be a boundary component of the internal completion of $V$. Let $\delta V$ denote the set of border compoents of $V$.    The set $|\delta V|$ is called the \emph{border} of $V$.

\end{defn}

\begin{lemma}

The point $x\in|\delta V|$ if and only if there exists a segment $[x,y]$ of a geodesic of $L$ such that $(x,y]\ss V$ and $x\notin V$.

\end{lemma}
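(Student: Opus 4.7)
The plan is to prove both implications. Throughout I use that the internal metric $d_{\rm int}$ on $V$ satisfies $d_{\rm int}\ge d_L$, so internal convergence implies ambient convergence, and the inclusion $V\hra L$ extends continuously to the internal completion, sending border points into $L\sm V$.

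For ($\Leftarrow$), suppose $[x,y]$ is a segment of a geodesic of $L$ with $(x,y]\ss V$ and $x\notin V$. Parametrize $[x,y]$ by $L$-arclength from $y$, let $\ell=d_L(x,y)$, and let $p_t$ denote the point at distance $t$ from $y$. For $s,t\in(0,\ell)$ the subsegment $[p_s,p_t]$ is itself a geodesic segment of $L$ lying in $V$, so $d_{\rm int}(p_s,p_t)\le|s-t|$. Thus for any $t_n\nearrow\ell$ the sequence $\{p_{t_n}\}\ss V$ is Cauchy in $d_{\rm int}$; its limit in the internal completion projects to $x\in L\sm V$ and is therefore a border point, identified with $x\in|\delta V|$.

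For ($\Ra$), suppose $x\in|\delta V|$ is represented by a Cauchy sequence $\{x_n\}\ss V$; then $x_n\to x$ in $L$. Choose a geodesically convex embedded disk $D$ in $L$ centered at $x$ small enough that $D\cap|\Lambda|$ is a closed disjoint union of near-geodesic arcs crossing $D$, and, if $x\in\bd L$, small enough that $D\cap\bd L$ is a single geodesic arc through $x$. By the Cauchy property, for large $m,n$ the path in $V$ realizing $d_{\rm int}(x_m,x_n)$ up to arbitrarily small error is shorter than the $L$-radius of $D$, so it cannot escape $D$. Hence the late $x_n$ lie in a common component $W$ of $D\sm|\Lambda|$, which is contained in $V$ and has $x$ in its frontier in $D$. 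Since at most one leaf $\lambda_0$ of $\Lambda$ passes through $x$, choose a geodesic segment $\tau$ of $L$ through $x$ transverse to $\lambda_0$ (or to $\bd L$) entering the side of $W$; a short enough sub-segment $[x,y]\ss\tau$ avoids all other leaves and all of $\bd L$, giving $(x,y]\ss W\ss V$.

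The main obstacle is the ($\Ra$) direction, particularly the production of the segment $[x,y]$. Once one knows that the Cauchy sequence settles into a single complementary component $W$ of the lamination in a small disk and that $W\ss V$, the existence of a transverse geodesic segment of $L$ entering $W$ from $x$ follows from the local product structure of a geodesic lamination in a surface and the geodesic convexity of small hyperbolic disks.
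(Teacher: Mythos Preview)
The paper states this lemma without proof, so there is nothing to compare your argument against directly; your write-up supplies what the paper omits, and it is essentially correct.

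The $(\Leftarrow)$ direction is clean: the half-open geodesic segment $(x,y]\ss V$ makes $\{p_{t_n}\}$ Cauchy for $d_{\rm int}$, its $L$-limit is $x\notin V$, and your observation that the $1$-Lipschitz inclusion $V\hra L$ extends to the completion and cannot send a border point back into the open set $V$ is exactly what is needed.

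For $(\Ra)$, the key step---that short $d_{\rm int}$-paths between late $x_n$ are trapped in the convex disk $D$, forcing the tail of the sequence into a single component $W$ of $D\sm|\Lambda|$---is correct and is the heart of the matter. Two small points deserve one extra sentence each. First, you should note explicitly that $x\in|\Lambda|$: since the frontier of $V$ in $L$ is contained in $|\Lambda|$, and $x\in\ol W\sm V\ss\ol V\sm V$, some leaf $\lambda_0$ does pass through $x$. Second, the phrase ``a short enough sub-segment $[x,y]\ss\tau$ avoids all other leaves'' needs the remark that $\lambda_0$ is \emph{not} accumulated by $|\Lambda|$ from the $W$-side at $x$; this follows from what you already have, since the open set $W$ touches $\lambda_0$ at $x$, which is impossible if leaves accumulate on $\lambda_0$ from that side. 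With that said, your final appeal to the local product structure is legitimate: the transverse geodesic from $x$ into the $W$-side lies in $W$ on a short initial segment, giving the desired $[x,y]$.
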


There is a natural map from the internal completion of a component $V$ of $L\sm|\Lambda|$ into $L$ taking $\delta V$ to a set of semi-isolated leaves of $\Lambda$. An isolated leaf of $\lambda$ might be the image of two border components of $V$ under this map but  a semi-isolated leaf of $\Lambda$ which is not isolated is the image of at most one.

\begin{rem}

In abuse of notation we will often identify $\delta V$ with its image under this map and consider $\delta V$ to be a subset of $\Lambda$.

\end{rem}

\begin{rem}

For more information on border and internal completion see~\cite[Section~5.1]{cc:hm}.

\end{rem}

\section{Simple closed geodesics in $L$}\label{simplegeo}

Let $\sigma$ be a simple closed geodesic in $L$. Note that since $\sigma$ is a geodesic, $\sigma$ is essential and can not bound a cusp.  Throughout this paper, we use the notation,

\begin{nota}

 $\sigma_{n} = f_{*}^{n}(\sigma)$, $n\in\Z$.
 
 \end{nota}

\begin{lemma}

If there exists $n\in\Z$ and $k>0$ such that $\sigma_{n} = \sigma_{n+k}$,
then $\sigma_{m} = \sigma_{m+k}$, all $m\in\Z$.

\end{lemma}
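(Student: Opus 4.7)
The plan is to show that the geodesic tightening map $f_*$ acts bijectively on the set of geodesics in $L$, with inverse $(f^{-1})_*$; given this, the lemma follows from a double induction starting at $m=n$.

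To establish invertibility, recall that a lift of $f(\gamma)$ is obtained as $\wt f(\wt\gamma)$ for some choice of lift $\wt f$ of $f$ and some lift $\wt\gamma$ of $\gamma$, and by \cite[Theorem~2]{cc:epstein} the two endpoints of $\wt f(\wt\gamma)$ on $\Se$ are the images under $\wh f$ of the endpoints of $\wt\gamma$. Thus the lifts of $f_*(\gamma)$ are precisely the geodesics in $\Delta$ whose pair of endpoints has the form $(\wh f(p),\wh f(q))$ for $(p,q)$ the endpoints of some lift of $\gamma$. Applying the same description to the homeomorphism $f^{-1}$ and using $\wh{f^{-1}} = (\wh f)^{-1}$ (since $\wh f$ is a homeomorphism of $\wh L$), the lifts of $(f^{-1})_*(f_*(\gamma))$ have the same endpoints at infinity as the lifts of $\gamma$. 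Since a geodesic is determined by its endpoints at infinity, $(f^{-1})_*(f_*(\gamma)) = \gamma$, and symmetrically $f_*((f^{-1})_*(\gamma)) = \gamma$, so $f_*$ is invertible with inverse $(f^{-1})_*$.

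Given the hypothesis $\sigma_n = \sigma_{n+k}$, applying $f_*$ to both sides and using $f_*(\sigma_j) = \sigma_{j+1}$ yields $\sigma_{n+1} = \sigma_{n+k+1}$; iterating shows $\sigma_m = \sigma_{m+k}$ for every $m \ge n$. Applying $(f_*)^{-1}=(f^{-1})_*$ to both sides of the hypothesis yields $\sigma_{n-1} = \sigma_{n+k-1}$, and iterating handles every $m \le n$. The two cases combine to give $\sigma_m = \sigma_{m+k}$ for all $m\in\Z$.

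The only nontrivial ingredient is the invertibility of $f_*$ on geodesics, which ultimately rests on the fact that the boundary extension $\wh f$ of $\wt f$ is a homeomorphism of $\wh L$ — precisely what \cite[Theorem~2]{cc:epstein} guarantees. Everything else in the argument is formal, so I do not anticipate a genuine obstacle beyond making sure the endpoint-tracking identity for $f_*$ and $(f^{-1})_*$ is written down cleanly.
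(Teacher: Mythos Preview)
Your proof is correct and follows essentially the same approach as the paper. The paper's proof is the single line $\sigma_{m} = f_{*}^{m-n}\circ f_{*}^{n}(\sigma) = f_{*}^{m-n}\circ f_{*}^{n+k}(\sigma) = \sigma_{m+k}$, which implicitly uses the invertibility of $f_*$ (already built into the notation $\sigma_n = f_*^n(\sigma)$ for $n\in\Z$); you make this invertibility explicit and replace the application of $f_*^{m-n}$ by an equivalent double induction, but the underlying idea is identical.
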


\begin{proof}

$\sigma_{m} = f_{*}^{m-n}\circ f_{*}^{n}(\sigma) = f_{*}^{m-n}\circ f_{*}^{n+k}(\sigma) = \sigma_{m+k}.$
\end{proof}

Thus, one of the following two possibilities holds,

\begin{enumerate}

\item $\sigma_{k} = \sigma$ some least integer $k>0$;

\item $\sigma_{n}$, $n\in\Z$, are distinct.

\end{enumerate}

The following lemma is clear.

\begin{lemma}

If $\gamma$ is a simple closed geodesic and $\tau$ is any simple geodesic, then $\tau$ spirals on $\gamma$ if and only if there are lifts $\wt\gamma$ of $\gamma$ and $\wt\tau$ of $\tau$ sharing an endpoint on $\Si$.

\end{lemma}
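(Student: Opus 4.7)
The plan is to work entirely in the universal cover $\Delta$, where ``$\tau$ spirals on $\gamma$'' translates into a lift of $\tau$ being trapped in a uniform tubular neighborhood of a lift of $\gamma$, and the latter condition is equivalent to sharing an ideal endpoint. First I would fix a lift $\wt\gamma$ of $\gamma$ with endpoints $p^{\pm}\in\Si$ and let $g\in\pi_{1}(L)$ be the primitive deck transformation stabilizing $\wt\gamma$ (a translation if $\gamma$ is two-sided, a glide reflection if $\gamma$ is one-sided); in either case its fixed point set on $\Si$ is exactly $\{p^{+},p^{-}\}$. For $\epsilon>0$ small, the $\epsilon$-neighborhood $N_{\epsilon}$ of $\gamma$ is an embedded annular (or M\"obius) tube, and its preimage in $\wt L$ is the disjoint union of uniform $\epsilon$-tubes, one around each lift of $\gamma$; write $\wt N_{\epsilon}$ for the tube around $\wt\gamma$, whose two boundary equidistants are asymptotic to $\wt\gamma$ and share the endpoints $p^{\pm}$.

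For the forward implication, assume $\tau$ spirals on $\gamma$. Then a cofinal subray of one end of $\tau$ lies in $N_{\epsilon}$. Lifting gives a geodesic subray of some lift of $\tau$ contained in a single lift of $N_{\epsilon}$, which, after applying a deck transformation, I may take to be $\wt N_{\epsilon}$. The resulting lift $\wt\tau$ is a complete geodesic of $\Delta$ whose ray is trapped in the bounded-width tube $\wt N_{\epsilon}$, so the endpoint of that ray on $\Si$ must lie in $\ol{\wt N_{\epsilon}}\cap\Si=\{p^{+},p^{-}\}$. Thus $\wt\tau$ and $\wt\gamma$ share an endpoint.

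For the converse, suppose lifts $\wt\tau$ and $\wt\gamma$ share an endpoint, say $p^{+}$. Two hyperbolic rays with a common ideal endpoint are asymptotic, so $d(\wt\tau(t),\wt\gamma)\to 0$ along the ray of $\wt\tau$ ending at $p^{+}$; projecting, the corresponding end of $\tau$ is eventually contained in $N_{\epsilon}$ for every small $\epsilon$. To upgrade mere proximity to actual spiraling, I would fix a short geodesic arc $c\subset L$ transverse to $\gamma$ at a point $y$ and a lift $\wt c$ through a point of $\wt\gamma$, and observe that the iterates $\{g^{n}\wt c\}_{n\in\Z}$ are pairwise disjoint lifts of $c$ accumulating on $\{p^{+},p^{-}\}$. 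The ray of $\wt\tau$ running to $p^{+}$ must therefore cross all but finitely many of these iterates, and projecting yields infinitely many intersections of $\tau$ with $c$ converging to $y$, which is precisely the definition of spiraling.

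The main point requiring care is the nonorientable case, in which $g$ is a glide reflection rather than a translation, so the tube $\wt N_{\epsilon}$ is stabilized by an orientation-reversing element and consecutive fundamental domains of $g$ along $\wt\gamma$ sit on alternating sides. However, $g$ still fixes exactly $p^{\pm}$ on $\Si$ and acts cocompactly on $\wt\gamma$, so both the ``trapped geodesic'' step in the forward direction and the transversal accumulation step in the converse go through without modification.
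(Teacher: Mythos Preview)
Your proof is correct. The paper itself offers no argument at all for this lemma; it is introduced with the sentence ``The following lemma is clear,'' so there is nothing to compare against on the paper's side.

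What you have supplied is exactly the kind of elementary hyperbolic-geometry argument one expects here: identifying spiraling with a subray trapped in an equidistant tube around a lift of $\gamma$, and then reading off the shared ideal endpoint from the tube's closure on $\Si$; and conversely, using asymptoticity of rays with a common ideal endpoint together with a periodic family of transversals $\{g^{n}\wt c\}$ to produce infinitely many crossings of $\tau$ with a short arc $c$ accumulating at a point of $\gamma$. The discussion of the nonorientable case (glide reflection stabilizer) is a nice touch, though nothing in the argument actually depends on whether $g$ preserves or reverses orientation, as you note. One small stylistic remark: since the paper never explicitly defines ``spirals on,'' you are implicitly fixing the definition as ``an end of $\tau$ is eventually contained in every tubular neighborhood of $\gamma$ and meets a transversal in points accumulating on $\gamma$''; this is the standard meaning and matches how the paper uses the term (e.g.\ in Proposition~\ref{spirals}), so no issue arises.
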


\begin{prop}\label{spirals}

If $\gamma$ is a simple closed geodesic, then there exists $\epsilon>0$ such that if  $\tau$ is a simple geodesic $\tau$ containing a point at a distance less than $\epsilon$ from $\sigma$, then either,

\begin{enumerate}

\item $\tau$ spirals on $\gamma$ or\upn{;}

\item $\gamma$ and $\tau$ intersect transversely.

\end{enumerate}

\end{prop}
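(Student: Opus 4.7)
The plan is to argue by contradiction. Suppose no such $\epsilon$ works; then for each $n$ we can pick a simple geodesic $\tau_{n}$ and a point $p_{n}\in\tau_{n}$ with $d(p_{n},\gamma)<1/n$ such that $\tau_{n}$ neither spirals on $\gamma$ nor meets $\gamma$ transversely. Since two distinct geodesics that meet must cross transversely, either $\tau_{n}=\gamma$ (in which case $\tau_{n}$ trivially spirals on $\gamma$) or $\tau_{n}\cap\gamma=\0$. We may therefore assume $\tau_{n}\cap\gamma=\0$ and $\tau_{n}\ne\gamma$.

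Fix a lift $\wt\gamma$ of $\gamma$ in $\Delta$ with endpoints $a,b\in\Si$, and let $\phi$ be the orientation-preserving hyperbolic translation along $\wt\gamma$ lying in the deck group (the generator of the stabilizer of $\wt\gamma$ if $\gamma$ is two-sided, its square otherwise), with attracting fixed point $b$ and repelling fixed point $a$. Using deck transformations we choose a lift $\wt\tau_{n}$ of $\tau_{n}$ with $d(\wt\tau_{n},\wt\gamma)<1/n$, and then apply a suitable power of $\phi$ so that the foot on $\wt\gamma$ of the common perpendicular between $\wt\gamma$ and $\wt\tau_{n}$ lies in a fixed fundamental domain for $\phi$ on $\wt\gamma$. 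By the preceding lemma, non-spiraling and disjointness force $\wt\tau_{n}$ to be ultraparallel to $\wt\gamma$. Passing to a subsequence, the foot converges to some $q_{\infty}\in\wt\gamma$ and $\wt\tau_{n}$ lies on a fixed side of $\wt\gamma$, so $\wt\tau_{n}\to\wt\gamma$ in the uniform sense and the endpoints $a_{n},b_{n}$ of $\wt\tau_{n}$ on $\Si$ (labelled so that $a_{n}\to a$ and $b_{n}\to b$) lie on one arc of $\Si\sm\{a,b\}$.

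The contradiction now comes from $\phi\wt\tau_{n}$. If $\phi\wt\tau_{n}=\wt\tau_{n}$, then $\wt\tau_{n}$ is the axis of $\phi$, forcing $\wt\tau_{n}=\wt\gamma$ and $\tau_{n}=\gamma$, a contradiction. Hence $\wt\tau_{n}$ and $\phi\wt\tau_{n}$ are distinct lifts of the simple geodesic $\tau_{n}$, so they are disjoint and their endpoints on $\Si$ are not interleaved. Parametrize the arc of $\Si\sm\{a,b\}$ containing $a_{n},b_{n}$ by $t\in(0,1)$ with $t=0$ at $a$ and $t=1$ at $b$; then $\phi$ acts on the arc monotonically with $\phi(0)=0$, $\phi(1)=1$, and $\phi(t)>t$ for $t\in(0,1)$. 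So $\phi(a_{n})>a_{n}$ with $\phi(a_{n})$ still near $0$, and $\phi(b_{n})>b_{n}$ with $\phi(b_{n})$ still near $1$. For $n$ large the order on the arc is therefore $a_{n}<\phi(a_{n})<b_{n}<\phi(b_{n})$, an interleaved configuration that contradicts disjointness of $\wt\tau_{n}$ and $\phi\wt\tau_{n}$.

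The main obstacle is the normalization step yielding a convergent subsequence $\wt\tau_{n}\to\wt\gamma$: in particular, arranging that the foot of the common perpendicular remain in a compact subset of $\wt\gamma$ rather than drifting to infinity. This is handled by translating along $\wt\gamma$ via powers of $\phi$. The heart of the argument is the interleaving observation: a simple geodesic accumulating on $\gamma$ without either crossing $\gamma$ or spiraling on it would necessarily have two distinct lifts whose endpoints interlace on $\Si$, violating simplicity of $\tau_{n}$.
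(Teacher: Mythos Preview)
Your argument is correct and rests on the same core idea as the paper's: a lift $\wt\tau$ of $\tau$ that comes close to $\wt\gamma$ without crossing it or sharing an endpoint must intersect its translate $T_{\wt\gamma}(\wt\tau)$ under the deck transformation along $\wt\gamma$, contradicting simplicity of $\tau$. The execution differs, however. The paper proceeds directly rather than via a sequence: it fixes the translation length $d$ of $T_{\wt\gamma}$ and chooses $\epsilon$ so small that any ultraparallel geodesic passing within $\epsilon$ of $\wt\gamma$ has perpendicular projection onto $\wt\gamma$ of length greater than $d$; this immediately forces $T_{\wt\gamma}(\wt\tau)\cap\wt\tau\ne\emptyset$. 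Your route reaches the same contradiction by taking a limit and reading off the interleaving of endpoints on $\Si$. The paper's version is shorter and yields an explicit $\epsilon$ in terms of $d$, while your endpoint-order argument is perhaps more transparent about \emph{why} the two lifts must cross; but the two are really the same geometric fact viewed from inside $\Delta$ versus from the boundary $\Si$.
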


\begin{proof}

Compare~ the proof of \cite[Lemma~4.6]{bca}.   Choose a lift $\wt\gamma$ of $\gamma$. Let   $T_{\wt\gamma}$ be an orientation preserving deck transformation with $\wt\gamma$ as axis and $d$ the hyperbolic distance a point of $\wt\gamma$ is moved by $T_{\wt\gamma}$. Choose $\epsilon>0$ such that if $\sigma$ is any geodesic in $\Delta$ disjoint from $\wt\gamma$, not sharing an endpoint on $\Si$ with $\wt\gamma$, and     containing a point at a distance less than $\epsilon$ from $\wt\gamma$, then the perpendicular projection (see~\cite[Figure~4.5]{bca})  of $\sigma$ on $\wt\gamma$ has length greater than $d$.

Suppose that $\tau$ is a simple geodesic that does not spiral on $\gamma$, does not intersect $\gamma$, and does contain a point at a distance  less than $\epsilon$ from $\gamma$. Then $\tau$ has a lift $\wt\tau$ containing a point at a distance less than $\epsilon$ from $\wt\gamma$, does not intersect $\wt\gamma$, and does not share an endpoint on $\Si$ with $\wt\gamma$. It follows that the perpendicular projection of $\wt\tau$ on $\wt\gamma$ has length greater than $d$. Thus, $T_{\wt\gamma}(\wt\tau)$ meets $\wt\tau$ contradicting the fact that $\tau$ is simple. It follows that either $\tau$ intersects $\gamma$ or that $\tau$ spirals on $\gamma$.

The proof of the converse is immediate.
\end{proof}

For the rest of Section~\ref{simplegeo} we make the following assumption.

\begin{*assump}\label{assumeone}

Assume that,

\begin{enumerate}

\item $\gamma$ and $\sigma$ are simple closed geodesics\upn{;}

\item There exist a subsequence $\{\sigma_{n_{k}}\}_{k\ge0}$ of $\{\sigma_{n}\}_{n\ge0}$ and lifts $\wt\gamma$ of $\gamma$ and $\wt\sigma_{n_{k}}$ of $\sigma_{n_{k}}$ such that $\wt\sigma_{n_{k}}\to\wt\gamma$ as $k\to\infty$. 

\end{enumerate}

\end{*assump}

By Proposition~\ref{spirals}, we can assume that all the $\sigma_{n_{k}}$ intersect $\gamma$ transversely,

\begin{cor}\label{spiral}

If $\gamma$ and $\sigma$ satisfy  the above \emph{Assumption}, then there exists a subsequence $\{\sigma_{n_{\ell}}\}_{\ell\ge0}$ of  $\{\sigma_{n_{k}}\}_{k\ge0}$ and a geodesic $\tau$ such that each $\sigma_{n_{\ell}}$ has lift $\wt\sigma'_{n_{\ell}}$ with $\wt\sigma'_{n_{\ell}}\to\wt\tau$, a lift of $\tau$, and $\tau$ spirals on $\gamma$.

\end{cor}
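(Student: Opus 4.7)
The plan is to exploit the dynamics on $\Si$ of the deck transformation $T = T_{\wt\gamma}$ (with axis $\wt\gamma$ and translation length $d$) to rescale the lifts $\wt\sigma_{n_k}$, producing a convergent sequence whose limit is distinct from $\wt\gamma$ but asymptotic to it. Let $p^\pm$ be the repelling and attracting fixed points of $T$ on $\Si$, i.e.\ the endpoints of $\wt\gamma$. Using a coordinate on $\Si\sm\{p^-,p^+\}$ in which $p^-=0$, $p^+=\infty$, and $T$ acts by multiplication by $e^d$ (concretely, the upper half plane chart with $\wt\gamma$ the positive imaginary axis), denote the endpoints of $\wt\sigma_{n_k}$ by $a_k$ and $b_k$. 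Pass to a first subsequence so that $a_k\to p^-$ and $b_k\to p^+$ from fixed sides. Because each $\sigma_{n_k}$ meets $\gamma$ transversely, $\sigma_{n_k}\ne\gamma$, and so $a_k,b_k\ne p^\pm$ (distinct simple closed geodesics have no lifts with a common endpoint, by the lemma preceding Proposition~\ref{spirals}).

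The core step is, for each large $k$, to choose the unique integer $m_k$ with $T^{m_k}(a_k)\in(1,e^d]$; since $|a_k|\to 0$ this forces $m_k\to+\infty$. Set $\wt\sigma'_{n_k}=T^{m_k}(\wt\sigma_{n_k})$, which is still a lift of $\sigma_{n_k}$. One endpoint $T^{m_k}(a_k)$ is pinned in the compact interval $(1,e^d]$; the other endpoint $T^{m_k}(b_k)=e^{m_k d}b_k$ tends to $\infty=p^+$ because $|b_k|$ is bounded below and $m_k\to+\infty$. Passing to a further subsequence indexed by $\ell$, $T^{m_\ell}(a_{n_\ell})\to c\in[1,e^d]$, and therefore $\wt\sigma'_{n_\ell}$ converges, uniformly in the Euclidean metric on $\Delta$, to the geodesic $\wt\tau$ joining $c$ to $p^+$. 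Since $c\ne p^\pm$, $\wt\tau$ is non-degenerate and $\wt\tau\ne\wt\gamma$, yet $\wt\tau$ shares the endpoint $p^+$ with $\wt\gamma$. The lemma preceding Proposition~\ref{spirals} then gives that $\tau$ spirals on $\gamma$ (simplicity of $\tau$ follows from simplicity of each $\sigma_{n_\ell}$ together with the uniform convergence).

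The main obstacle is avoiding a degenerate limit. The naive move --- translating each $\wt\sigma_{n_k}$ so that its crossing with $\wt\gamma$ falls in a fixed fundamental segment of $\wt\gamma$ --- fails, because $\wt\sigma_{n_k}\to\wt\gamma$ forces the crossing angles to zero and so collapses the limit back onto $\wt\gamma$. The remedy in the plan above is to base the choice of $m_k$ on the rate at which the endpoint $a_k$ approaches $p^-$, not on the crossing point: compactness of $(1,e^d]$ keeps $c$ bounded away from both $p^\pm$, while the attracting dynamics of $T^{m_k}$ at $p^+$ simultaneously pull $T^{m_k}(b_k)$ onto $p^+$, producing a non-degenerate $\wt\tau\ne\wt\gamma$ sharing exactly one endpoint with $\wt\gamma$.
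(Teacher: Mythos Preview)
Your argument is correct and shares the paper's core idea: renormalize one endpoint of a lift of $\sigma_{n_k}$ into a fixed fundamental domain for $T_{\wt\gamma}$ acting on $\Si$, then extract a convergent subsequence. The paper differs from you only in how it controls the \emph{second} endpoint. Rather than computing $T^{m_k}(b_k)\to p^{+}$ directly from $m_k\to\infty$, the paper takes any lift of $\sigma_{n_k}$ that crosses $\wt\gamma$ and has one endpoint in a fundamental interval $[a,b)\ss\Si$, passes to a limit $\wt\tau$, and then argues that $\wt\tau$ cannot cross $\wt\gamma$: since lifts of the \emph{simple} curve $\sigma_{n_\ell}$ approach both $\wt\gamma$ and $\wt\tau$, a transverse intersection $\wt\tau\cap\wt\gamma$ would force $\sigma_{n_\ell}$ to self-intersect. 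As each $\wt\sigma'_{n_\ell}$ does meet $\wt\gamma$, the limit $\wt\tau$ must therefore share an ideal endpoint with $\wt\gamma$. Your explicit dynamical computation replaces this simplicity-based step; what it buys you is that you never need the chosen lifts to cross $\wt\gamma$, nor the auxiliary argument ruling out $\wt\tau\cap\wt\gamma\ne\0$. Incidentally, the ``naive move'' you warn against (pinning the crossing \emph{point} on $\wt\gamma$ rather than an endpoint on $\Si$) is not what the paper does either; the paper also pins an endpoint on $\Si$, so your remedy and the paper's coincide, with only the finishing arguments differing.
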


\begin{proof}

Denote by $T_{\wt\gamma}$ an orientation preserving deck transformation with axis $\wt\gamma$ and let $[a,b]\ss\Si$ be such that $T_{\wt\gamma}(a) = b$. Each $\sigma_{n_{k}}$ has a lift meeting $\wt\gamma$ with an endpoint in $[a,b)$ so there exists a subsequence $\{\sigma_{n_{\ell}}\}_{\ell\ge0}$ of $\{\sigma_{n_{k}}\}_{k\ge0}$ and a geodesic $\tau$ such that each $\sigma_{n_{\ell}}$ has lift $\wt\sigma'_{n_{\ell}}$ meeting $\wt\gamma$ with an endpoint in $[a,b)$ and $\wt\sigma'_{n_{\ell}}\to\wt\tau$, a lift of $\tau$ with endpoint in $[a,b]$.

The geodesics $\tau$ and $\gamma$ can not intersect since each $\sigma_{n_{\ell}}$ is simple and there exist sequences of lifts of the $\sigma_{n_{\ell}}$ approaching both $\wt\tau$ and $\wt\gamma$.  Since each $\wt\sigma'_{n_{\ell}}$ meets $\wt\gamma$ and $\wt\sigma'_{n_{\ell}}\to\wt\tau$, it follows that $\wt\gamma$ and $\wt\tau$ share an endpoint on $\Si$. Thus $\tau$ spirals on $\gamma$.
\end{proof}

\begin{nota}

The notation $\#(A)$ denotes the number of points in the finite set $A$.

\end{nota}

\begin{cor}

If $\gamma$ and $\sigma$ satisfy  the above \emph{Assumption}, then  for each integer $r>0$, $f^{r}_{*}(\gamma)$ and $\gamma$ are either disjoint or coincide.

\end{cor}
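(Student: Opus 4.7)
The approach is by contradiction: I would suppose $\gamma\neq f^{r}_{*}(\gamma)$ while $\gamma\cap f^{r}_{*}(\gamma)\neq \0$. Since both are simple closed geodesics, they must then cross transversely in $L$. I would lift the picture so that $\wt\gamma$ is the particular lift furnished by the Assumption, pick a lift $\wt\mu$ of $f^{r}_{*}(\gamma)$ crossing $\wt\gamma$ transversely at some $\wt p\in\Delta$, and let $\wt{f^{r}_{*}}$ denote the lift of $f^{r}_{*}$ satisfying $\wt{f^{r}_{*}}(\wt\gamma)=\wt\mu$. By Lemma~\ref{f*seq}, $\wt{f^{r}_{*}}(\wt\sigma_{n_{k}})$ is a lift of $\sigma_{n_{k}+r}$ converging to $\wt\mu$ in the Euclidean metric on $\Delta$.

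The plan is to force $\#(\sigma_{n_{k}}\cap\sigma_{n_{k}+r})$ to grow without bound, which contradicts the fact that geodesic tightening (being induced by a homeomorphism) preserves geometric intersection numbers, so that $\#(\sigma_{n_{k}}\cap\sigma_{n_{k}+r})=\#(\sigma\cap\sigma_{r})$ is a fixed finite constant. To produce many intersections, I would let $T$ be the hyperbolic deck transformation with axis $\wt\gamma$ and translation length $d$ equal to the length of $\gamma$. For each $j\in\Z$, the geodesic $T^{j}(\wt\mu)$ is again a lift of $f^{r}_{*}(\gamma)$ crossing $\wt\gamma$ transversely at $T^{j}(\wt p)$, while $T^{j}\wt{f^{r}_{*}}(\wt\sigma_{n_{k}})$ is a lift of $\sigma_{n_{k}+r}$ converging to $T^{j}(\wt\mu)$ as $k\to\infty$. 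Because transverse crossing of two geodesics is an open condition on the linking of their endpoint pairs on $\Si$, for $k$ sufficiently large (depending on $j$) $\wt\sigma_{n_{k}}$ will cross $T^{j}\wt{f^{r}_{*}}(\wt\sigma_{n_{k}})$ at some point $\wt q_{j}$ near $T^{j}(\wt p)$.

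Finally, the stabilizer of $\wt\sigma_{n_{k}}$ in the deck group is cyclic, generated by a hyperbolic isometry $T_{n_{k}}$ of translation length $\ell_{k}$ equal to the length of $\sigma_{n_{k}}$. Since consecutive points $\wt q_{j}$ lie at separation approximately $d$ along $\wt\sigma_{n_{k}}$ (because $\wt\sigma_{n_{k}}$ is close to $\wt\gamma$ and consecutive $T^{j}(\wt p)$'s are distance $d$ apart along $\wt\gamma$), two of them $\wt q_{j_{1}},\wt q_{j_{2}}$ descend to the same point of $\sigma_{n_{k}}\cap\sigma_{n_{k}+r}$ only when $|j_{1}-j_{2}|d$ is comparable to $\ell_{k}$. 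Thus $\wt q_{0},\wt q_{1},\ldots,\wt q_{\lfloor \ell_{k}/d\rfloor-1}$ project to $\lfloor \ell_{k}/d\rfloor$ distinct intersections in $L$, so $\#(\sigma_{n_{k}}\cap\sigma_{n_{k}+r})\ge \lfloor \ell_{k}/d\rfloor$. Since the $\sigma_{n_{k}}$'s are distinct simple closed geodesics (the case in which the Corollary has substance), the discreteness of the length spectrum of $L$ forces $\ell_{k}\to\infty$, yielding the desired contradiction. The main obstacle is the bookkeeping in this last step: verifying that the $\wt q_{j}$'s genuinely contribute independently modulo $\langle T_{n_{k}}\rangle$, which rests on comparing the $d$-scale spacing of the $\wt q_{j}$'s along $\wt\gamma$ with the much larger length scale $\ell_{k}$, and on choosing $k$ large enough so that both the convergences $T^{j}\wt{f^{r}_{*}}(\wt\sigma_{n_{k}})\to T^{j}(\wt\mu)$ apply simultaneously for all $j$ in the range $0\le j<\lfloor\ell_{k}/d\rfloor$.
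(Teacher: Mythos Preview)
Your approach is essentially correct and reaches the same contradiction as the paper (the fixed intersection number $N_{r}=\#(\sigma\cap\sigma_{r})=\#(\sigma_{n_{k}}\cap\sigma_{n_{k}+r})$ versus an unbounded count), but the route is genuinely different. The paper does \emph{not} argue directly with $\wt\gamma$ and $\wt\mu$; instead it first invokes Corollary~\ref{spiral} to replace $\gamma$ by the noncompact geodesic $\tau$ that spirals on $\gamma$ and is approached by a further subsequence $\sigma_{n_{\ell}}$. Then $f_{*}^{r}(\tau)$ spirals on $f_{*}^{r}(\gamma)$, so a single transverse crossing of $\gamma$ with $f_{*}^{r}(\gamma)$ already forces $\tau\cap f_{*}^{r}(\tau)$ to be infinite \emph{in $L$}. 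Choosing $N_{r}+1$ of those points with disjoint neighbourhoods $V_{j}\subset L$ and letting $\ell\to\infty$ yields a point of $\sigma_{n_{\ell}}\cap\sigma_{n_{\ell}+r}$ in each $V_{j}$, hence $N_{r}+1$ distinct intersections in $L$. The passage through $\tau$ buys exactly what costs you effort: because $\tau$ is noncompact, the infinitely many crossings are already separated downstairs, so there is no need to analyse deck--orbits, no appeal to $\ell_{k}\to\infty$, and no discreteness of the length spectrum.

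Your argument works once the circular dependency in the final step is removed. You ask for crossings $\wt q_{j}$ for all $0\le j<\lfloor\ell_{k}/d\rfloor$, but this range grows with $k$ while the required closeness $T^{j}\wt{f_{*}^{r}}(\wt\sigma_{n_{k}})\approx T^{j}(\wt\mu)$ only holds, for a given $k$, on a range of $j$ that you have not controlled uniformly. The fix is that you never need that many: set $J=N_{r}+1$ once and for all. Since $\ell_{k}\to\infty$ (as you note, this uses that the $\sigma_{n}$ are pairwise distinct, which the Assumption forces since lifts of finitely many closed geodesics are locally finite in $\Delta$), pick $k$ so large that $\ell_{k}>(J-1)d+1$ and so that each $\wt q_{j}$ lies within hyperbolic distance $\tfrac12$ of $T^{j}(\wt p)$ for $0\le j<J$. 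Then any two $\wt q_{i},\wt q_{j}$ lie within distance $(J-1)d+1<\ell_{k}$ of each other on $\wt\sigma_{n_{k}}$, hence differ by no nontrivial power of $T_{n_{k}}$; and since $\sigma_{n_{k}}\ne\sigma_{n_{k}+r}$, any deck transformation identifying them must lie in $\langle T_{n_{k}}\rangle$. So the $J=N_{r}+1$ points are distinct in $L$, giving the contradiction.
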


\begin{proof}

Let $N_{r} = \#(\sigma\cap  \sigma_{r}) = \#(\sigma_{n_{\ell}}\cap \sigma_{r+n_{\ell}})$ and $\tau$ be the geodesic constructed in Corollary~\ref{spiral}.  If $\gamma$ and $f_{*}^{r}(\gamma)$ intersect transversly, then $\tau$ and $f_{*}^{r}(\tau)$ have infinitely many points of transverse intersection.  Choose disjoint neighborhoods $V_{j}$, $1\le j\le N_{r}+1$, of $N_{r}+1$ points of transverse intersetion of $\tau$ and $f_{*}^{r}(\tau)$. By taking $\ell$ sufficiently large one can find a point of $\sigma_{n_{\ell}}\cap \sigma_{r+n_{\ell}}$ in each $V_{j}$. Since there are $N_{r}$ points in $\sigma_{n_{\ell}}\cap \sigma_{r+n_{\ell}}$ and $N_{r}+1$ of the $V_{j}$ this is a contradiction and the lemma follows.
\end{proof}

The next corollary follows since $L$ has finite genus.

\begin{cor}\label{inGamma}

If $\gamma$ and $\sigma$ satisfy  the above \emph{Assumption},  then for some least integer $r>0$, $f^{r}_{*}(\gamma) = \gamma$, $f_{*}^{r}$ fixes  the orientation of $\gamma$, and $f^{r}$ is orientation preserving.

\end{cor}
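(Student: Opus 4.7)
The plan is to first produce some positive integer $N$ with $f_{*}^{N}(\gamma) = \gamma$ via a finiteness argument, then adjust to the two orientation conditions by taking a suitable multiple of $N$, and finally take the least integer satisfying all three properties.

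For the first step, the previous corollary asserts that for every positive integer $r$, the geodesics $\gamma$ and $f_{*}^{r}(\gamma)$ either coincide or are disjoint. Since $f$ is a homeomorphism, $f_{*}$ is a bijection on the set of simple closed geodesics in $L$, so the same dichotomy (disjoint or equal) holds between any two terms of the sequence $\{f_{*}^{k}(\gamma)\}_{k\ge 0}$. Because $L$ has finite area by Proposition~\ref{areahypsur}, and every essential free homotopy class contains a unique simple closed geodesic, $L$ admits only finitely many pairwise disjoint simple closed geodesics (a pair-of-pants decomposition argument, or the standard Euler-characteristic bound on the number of components of a multicurve, makes this explicit). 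Pigeonholing an infinite sequence into this finite collection produces integers $0\le i<j$ with $f_{*}^{j}(\gamma) = f_{*}^{i}(\gamma)$, whence $f_{*}^{j-i}(\gamma) = \gamma$.

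For the second step, let $N$ be the least positive integer with $f_{*}^{N}(\gamma) = \gamma$. The restriction $f_{*}^{N}|\gamma$ is a self-homeomorphism of the circle $\gamma$ and therefore either preserves or reverses its orientation; in the latter case we replace $N$ by $2N$. Similarly, when $L$ is orientable $f^{N}$ either preserves or reverses the orientation of $L$, and a further doubling handles this case (when $L$ is nonorientable the analogous statement is verified on a two-sided regular neighborhood of $\gamma$ or on an orientation cover). After at most two doublings the resulting exponent satisfies all three conditions, and taking the least such gives the desired $r$.

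The main obstacle is the finiteness step: verifying that a standard hyperbolic surface of finite type supports only finitely many pairwise disjoint simple closed geodesics. This is exactly where the finite-genus hypothesis enters, via the area formula of Proposition~\ref{areahypsur} and the uniqueness of the geodesic representative in each nontrivial isotopy class. Once this is granted, the rest of the argument is a routine pigeonhole followed by a doubling of exponents to enforce orientation consistency.
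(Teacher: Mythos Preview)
Your proposal is correct and follows essentially the same approach as the paper, which simply states that the corollary ``follows since $L$ has finite genus.'' You have filled in precisely the details the paper leaves implicit: the pairwise disjoint-or-equal dichotomy among the $f_{*}^{k}(\gamma)$, the pigeonhole via the finite bound on disjoint simple closed geodesics in a finite-type surface, and the doubling of the exponent to secure the two orientation conditions.
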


Let $c,d$ be the endpoints of $\wt\gamma$ and $\wt f_{*}^{r}$ be a lift of $f_{*}^{r}$ fixing $\wt\gamma$ and thus both $c$ and $d$.

\begin{cor}\label{transto}

If $\gamma$ and $\sigma$ satisfy  the above \emph{Assumption} and  $\alpha$ is any simple closed geodesic transverse to $\gamma$, then $f^{n}_{*}(\alpha)$, are distinct, $n\in\Z$.

\end{cor}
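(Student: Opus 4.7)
The plan is to argue by contradiction along the lines of the preceding corollary. Suppose the geodesics $f_*^n(\alpha)$, $n\in\Z$, are not all distinct; then by the first lemma of this section there is a least integer $p>0$ with $f_*^p(\alpha)=\alpha$, so the orbit $\{f_*^j(\alpha)\mid j\in\Z\}$ has only $p$ distinct elements. Set $N=\max_{0\le j<p}\#(\sigma\cap f_*^j(\alpha))$. Since $f_*$ preserves the number of intersections of two simple closed geodesics (the same invariance used to write $N_r=\#(\sigma\cap\sigma_r)=\#(\sigma_{n_\ell}\cap\sigma_{r+n_\ell})$ in the preceding corollary), one has
$$\#(\sigma_{n_\ell}\cap\alpha)=\#(\sigma\cap f_*^{-n_\ell}(\alpha))\le N$$
for every $\ell\ge 0$.

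Next, let $\tau$ be the geodesic produced by Corollary~\ref{spiral} and pick a lift $\wt\tau$ sharing an endpoint $c\in\Si$ with $\wt\gamma$. I would show that $\tau\cap\alpha$ is infinite as follows: since $\alpha$ is transverse to $\gamma$, iterating the deck transformation $T_{\wt\gamma}$ on a single lift of $\alpha$ crossing $\wt\gamma$ produces infinitely many lifts of $\alpha$ crossing $\wt\gamma$ and accumulating at $c$. Those sufficiently close to $c$ must also cross $\wt\tau$, and the resulting intersection points on $\wt\tau$ project to distinct points of $\tau\cap\alpha$ because the deck stabilizer of $\wt\tau$ is trivial (as $\tau$ is an open geodesic that spirals on $\gamma$ and is in particular not closed).

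Finally, choose $N+1$ points of transverse intersection of $\tau$ and $\alpha$, and pairwise disjoint open neighborhoods $V_1,\ldots,V_{N+1}$ around them. For $\ell$ sufficiently large, $\wt\sigma'_{n_\ell}$ approximates $\wt\tau$ closely enough that each $V_j$ contains at least one point of $\sigma_{n_\ell}\cap\alpha$, so $\#(\sigma_{n_\ell}\cap\alpha)\ge N+1$, contradicting the bound above. The step I expect to need the most care is verifying the infinitude of $\tau\cap\alpha$, and in particular that distinct crossings in $\Delta$ really do project to distinct points of $L$; once this is in hand, the remainder of the argument parallels the preceding corollary essentially verbatim.
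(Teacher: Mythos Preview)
Your argument is correct and takes a genuinely different route from the paper's. You follow the template of the immediately preceding corollary: produce the spiraling geodesic $\tau$ from Corollary~\ref{spiral}, observe that $\tau\cap\alpha$ is infinite, and then use the bound $\#(\sigma_{n_\ell}\cap\alpha)\le N$ to reach a contradiction. The paper instead invokes Corollary~\ref{inGamma} to find $r>0$ with $f_*^r(\gamma)=\gamma$, enlarges $r$ so that a chosen lift $\wt f_*^r$ also fixes every lift of $\alpha$ meeting $\wt\gamma$, and then shows that any lift of any $\sigma_n$ meeting $\wt\gamma$ is trapped between two fixed lifts $T_{\wt\gamma}^s(\wt\alpha)$ and $T_{\wt\gamma}^s(\wt\alpha')$ of $\alpha$ for some $s\in\Z$; this sandwiching prevents $\wt\sigma_{n_k}\to\wt\gamma$, contradicting the Assumption directly. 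Your approach is shorter and reuses the machinery of Corollary~\ref{spiral}, while the paper's approach bypasses $\tau$ entirely and exploits the joint $f_*$-periodicity of $\gamma$ and $\alpha$ to control the lifts of $\sigma_n$ along $\wt\gamma$.

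One small point on the step you flagged: to conclude that the crossings on $\wt\tau$ project to distinct points of $L$ you need not only that the deck stabilizer of $\wt\tau$ is trivial but also that $\tau$ is \emph{simple}; otherwise a nontrivial deck transformation could carry $\wt\tau$ to a different lift meeting $\wt\tau$, and the projection $\wt\tau\to\tau$ need not be injective. Simplicity of $\tau$ is immediate here---it is a limit of the simple geodesics $\sigma_{n_\ell}$---by the same argument used in Corollary~\ref{spiral} to show $\tau\cap\gamma=\emptyset$.
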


\begin{proof}

Suppose $f^{n}_{*}(\alpha) = \alpha$ for some $n>0$   and       choose the integer $r$ and lifts $\wt\gamma$ and $\wt f_{*}^{r}$ as above . By choosing $r$ larger, we can further assume that $\wt f^{r}_{*}(\wt\alpha) = \wt\alpha$ for every lift $\wt\alpha$ of $\alpha$ meeting $\wt\gamma$. By Proposition~\ref{spirals}, we can assume that $\wt\sigma_{n_{k}}\cap\wt\gamma\ne0$ all $k\ge 1$. Let $T_{\wt\gamma}$ be aa orientation preserving deck transformation with axis $\wt\gamma$. Choose $x\in\wt\gamma$ and let $y = T_{\wt\gamma}(x)$. Let $\GG$ be the finite set of lifts of the geodesics $\sigma_{0},\ldots,\sigma_{r-1}$ meeting the interval $[x,y]\ss\wt\gamma$. Since $\GG$ is finite, there exists lifts $\wt\alpha$ and $\wt\alpha'$ of $\alpha$ meeting $\wt\gamma$ such that all the geodesic in $\GG$ lie between $\wt\alpha$ and $\wt\alpha'$ in $\Delta$. 

Consider $\sigma_{n}$ where $n = i + jr$ for integers $0\le i\le r-1$ and $j\ge 0$. Then, $\sigma_{n} = f_{*}^{jr}(\sigma_{i})$. If $\beta$ is a lift of $\sigma_{n}$ that meets $\wt\gamma$, let $\rho =  \wt  f_{*}^{-jr}(\beta)$. Then $\rho$ is a lift of $\sigma_{i}$  that meets $\wt\gamma$. Thus, there exists an integer $s\in\Z$, such that $\rho = T_{\wt\gamma}^{s}(\zeta)$ with $\zeta\in\GG$ a lift of $\sigma_{i}$ meeting the interval $[x,y]\ss\wt\gamma$. Since $\zeta$ is between  the lifts $\wt\alpha$ and $\wt\alpha'$   of $\alpha$, it follows that $\rho$ is between   the lifts $T_{\wt\gamma}^{s}(\wt\alpha)$ and $T_{\wt\gamma}^{s}(\wt\alpha')$  of $\alpha$. Thus, $\beta = \wt  f_{*}^{jr}(\rho)$ is between $\wt  f_{*}^{jr}\circ T_{\wt\gamma}^{s}(\wt\alpha) = T_{\wt\gamma}^{s}(\wt\alpha)$ and $\wt  f_{*}^{jr}\circ T_{\wt\gamma}^{s}(\wt\alpha') = T_{\wt\gamma}^{s}(\wt\alpha')$. That is,  any lift of $\sigma_{n}$, $n\in\Z$, meeting   $\wt\gamma$ lies  between $T_{\wt\gamma}^{s}(\wt\alpha)$ and $T_{\wt\gamma}^{s}(\wt\alpha')$, for some $s\in\Z$. 

Therefore, given $k\ge 0$, there exsts an integer $s_{k}\in\Z$ such that $\wt\sigma_{n_{k}}$ lies between $T_{\wt\gamma}^{s_{k}}(\wt\alpha)$ and $T_{\wt\gamma}^{s_{k}}(\wt\alpha')$, some $s_{k}\in\Z$. Thus, the sequence $\{\wt\sigma_{n_{k}}\}_{k\ge0}$ does not converge to $\wt\gamma$. This contradicts our Assumption and the  corollary follows.
\end{proof}

\section{The  laminations $\Lambda^{\sigma}_{\pm}$}\label{constrLambdapm}

\begin{*assump}\label{assumetwo}

Throughout Section~\ref{constrLambdapm},   $U$ is a fixed component of $L\sm|\Gamma|$ and $\sigma\ss U$ is a fixed geodesic such that the geodesics $\sigma_{n} = f_{*}^{n}(\sigma)$, $n\in\Z$, are distinct.

\end{*assump}

\subsection{$\LL^{\sigma}_{+}$ and the positive  lamination $\Lambda^{\sigma}_{+}$}\label{poslam}

In this and the next section we explicitly prove results for $\LL^{\sigma}_{+}$ and the positive lamination $\Lambda^{\sigma}_{+}$. The same results hold for $\LL^{\sigma}_{-}$ and the negative lamination $\Lambda^{\sigma}_{-}$ with the obvious modifications.

\begin{nota}

Let $n_{U}$ denote the least positive integer such that $f_{*}^{n_{U}}(U) = U$.

\end{nota}

\begin{lemma}

$\LL^{\sigma}_{+}\ne\0$.

\end{lemma}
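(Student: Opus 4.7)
The plan is a compactness-plus-finite-intersection argument in the space of geodesics of $\Delta$.

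First, since $L$ has finite type, Proposition~\ref{cuspnhb} places every simple closed geodesic of $L$ in a fixed compact ``thick part.''  I would choose a compact set $F$ contained in a fixed lift $\wt U$ of $U$ such that every simple closed geodesic $\tau \subset U$ has some lift in $\wt U$ meeting $F$ (intersect a fundamental domain for the stabilizer of $\wt U$ with the preimage of the thick part).  The set $\MM$ of geodesics of $\Delta$ meeting $F$ is compact in the endpoint topology on $\Si \times \Si$ (equivalently, the uniform-Euclidean topology on $\Delta$), since $F$ is bounded away from $\Si$ and so these endpoints stay bounded away from the diagonal, with the limit geodesic itself still meeting the compact set $F$.

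Next, let $n_U$ be the period of $U$.  For each $p > 0$ the set $M_p = \{n \geq 1 : n_U \mid np\}$ is infinite, and for $n \in M_p$ the geodesic $\sigma_{np}$ lies in $U$, so it has a lift $\wt\sigma_{np} \subset \wt U$ meeting $F$.  Let $\Omega_p \subset \MM$ denote the (nonempty, closed) set of subsequential limits of $\{\wt\sigma_{np}\}_{n \in M_p}$.  The crucial observation is $\Omega_{pq} \subseteq \Omega_p \cap \Omega_q$, since any subsequence of $\{\sigma_{kpq}\}_k$ is also a subsequence of $\{\sigma_{mp}\}_m$ via $m = kq$.  Hence the family $\{\Omega_p\}_{p \geq 1}$ has the finite intersection property in the compact space $\MM$, so $\bigcap_{p \geq 1} \Omega_p \neq \emptyset$; pick $\wt\gamma$ in this intersection, and note it is a genuine (nondegenerate) geodesic.

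Finally, I would verify that $\gamma$, the projection of $\wt\gamma$ to $L$, lies in $\LL^{\sigma}_+$.  Membership $\wt\gamma \in \Omega_p$ gives the required convergent subsequence of lifts for each $p$, and translating by deck transformations handles every other lift of $\gamma$.  Simplicity is routine: if two lifts of $\gamma$ crossed transversely, then the corresponding lifts of $\sigma_{n_k p}$ would cross for large $k$, contradicting simplicity of $\sigma_{n_k p}$.  The main obstacle, and the only step that uses the geometry non-trivially, is showing $\gamma \subset U$ rather than on $\partial U$: if $\wt\gamma$ either coincided with or transversely met a lift of some $\gamma' \in \Gamma$, then for large $k$ Proposition~\ref{spirals} would force $\sigma_{n_k p}$ to spiral on or cross $\gamma'$; neither is possible, since $\sigma_{n_k p}$ is compact (ruling out spiraling) and lies in a component of $L \setminus |\Gamma|$ disjoint from $\gamma'$ (ruling out transverse intersection).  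Thus Proposition~\ref{spirals} is what keeps the limit strictly inside $U$, which is the key geometric content beyond the abstract compactness machinery.
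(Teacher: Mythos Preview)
Your argument is correct and runs parallel to the paper's, though the packaging is different.  The paper works downstairs with \emph{points}: it forms the nested sets
\[
X^{\sigma}_{+}(p)=\bigcap_{n\ge1}\overline{\bigcup_{k\ge n}\sigma_{p!\,n_{U}k}}
\]
inside a compact thick part $L'\subset L$, takes a point $x$ in their intersection, and then manufactures a geodesic through $x$ by passing to a subsequence of unit tangent vectors.  You instead work upstairs in the space of geodesics, replacing the $p!$ nesting device by the finite-intersection observation $\Omega_{pq}\subset\Omega_{p}\cap\Omega_{q}$.  Your route is a bit more direct, since it produces the limiting geodesic in one step rather than point-then-direction, and it lets you fold the verification that $\gamma\subset U$ into the same argument (the paper defers this to Proposition~\ref{homeoR} and Corollary~\ref{ssU}).

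One small gap to patch: as written, $\Omega_{p}$ depends on a \emph{choice} of lift $\wt\sigma_{np}$ for each $n\in M_{p}$, and the inclusion $\Omega_{pq}\subset\Omega_{p}$ tacitly requires that the lift chosen for $\sigma_{kpq}$ in the definition of $\Omega_{pq}$ coincide with the lift chosen for $\sigma_{(kq)p}$ in the definition of $\Omega_{p}$.  The fix is immediate---choose the lift of $\sigma_{m}$ once and for all as a function of $m$ alone, not of the factorisation $m=np$---but it should be stated.
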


\begin{proof}
By Proposition~\ref{cuspnhb} there exists a compact subsurface $L'\ss L$   such that every simple closed geodesic in $L$ is contained in $L'$. Define, 
$$X^{\sigma}_{+}(p) = \bigcap_{n=1}^{\infty}\ol{\bigcup_{k=n}^{\infty}f_{*}^{p!n_{U}k}(\sigma)}\ss L'\cap U.$$
Note that,
$$X^{\sigma}_{+}(1)\supset X^{\sigma}_{+}(2)\supset\cdots\supset X^{\sigma}_{+}(p)\supset\cdots.$$
Define,
$$X^{\sigma}_{+} = \bigcap_{p=1}^{\infty}X^{\sigma}_{+}(p)\ss L'\cap U.$$
The set $X^{\sigma}_{+}\ne\0$ by the compactness of $L'$. We now show that $X^{\sigma}_{+}\ss|\LL^{\sigma}_{+}|$.

Suppose $x\in X^{\sigma}_{+}$ and $p>0$ is an integer. We will show that there exists a simple geodesic $\beta\in\LL^{\sigma}_{+}$ containing $x$.

Since $x\in X^{\sigma}_{+}$, there exists a subsequence $\{\sigma_{n_{k}p}\}_{k\ge 0}$ of $\{\sigma_{p!n_{U}n}\}_{n\ge 0}$ and points $x_{k}\in\sigma_{n_{k}p}$ such that $x_{k}\to x$. Let $v_{k}$ be the unit tangent vector to $\sigma_{n_{k}p}$ at $x_{k}$. By again passing to a subsequence if neccesary we can assume that $v_{k}\to v$, a unit tangent vector to $L$ at $x$. Let $\beta$ be the unique geodesic through $x$ with tangent vector $v$ and $\wt\beta$ any lift of $\beta$. Let $\wt x$ be the lift of $x$ in $\wt\beta$ and choose lifts $\wt x_{k}$ of $x_{k}$ with $\wt x_{k} \to \wt x$. Choose lifts $\wt\sigma_{n_{k}p}$ of $\sigma_{n_{k}p}$ containing $\wt x_{k}$, $k\ge 0$.   Then $\wt \sigma_{n_{k}p}\to \wt\beta$ so $\beta\in\LL^{\sigma}_{+}$. 

If $\beta$ is not simple, then $\beta$ has lifts $\wt\beta$ and $\wt\beta'$ such that $\wt\beta\cap\wt\beta'\ne\0$. Then for $k$ sufficiently large, $\sigma_{n_{k}p}$ has lifts $\wt\sigma_{n_{k}p}$ and $\wt\sigma_{n_{k}p}'$ such that $\wt\sigma_{n_{k}p}\cap\wt\sigma_{n_{k}p}'\ne\0$ so $\sigma_{n_{k}p}$ is not simple. This contradiction implies that $\beta$ is simple.

Thus, $x\in\beta\in\LL^{\sigma}_{+}$ so $x\in|\LL^{\sigma}_{+}|$.
\end{proof}

\begin{prop}\label{homeoR}

Every geodesic in $\LL^{\sigma}_{+}$ is homeomorphis to the reals.

\end{prop}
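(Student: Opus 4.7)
The argument splits into a structural remark and the real work. The structural remark is that any simple complete geodesic in the hyperbolic surface $L$ is homeomorphic to $\R$ or to $S^{1}$: a lift $\wt\gamma\ss\Delta$ is a bi-infinite hyperbolic geodesic, and the stabilizer of $\wt\gamma$ in $\pi_{1}(L)$ is either trivial (giving $\gamma\cong\R$) or infinite cyclic generated by a hyperbolic translation along $\wt\gamma$ (giving a simple closed geodesic). The proposition thus reduces to showing that no $\gamma\in\LL^{\sigma}_{+}$ is a simple closed geodesic.

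Suppose for contradiction $\gamma\in\LL^{\sigma}_{+}$ is a simple closed geodesic. Applying Definition~\ref{defnLL} with $p=1$ to any lift $\wt\gamma$ yields lifts $\wt\sigma_{n_{k}}\to\wt\gamma$, so the pair $(\gamma,\sigma)$ satisfies Assumption~\ref{assumeone}. Corollary~\ref{inGamma} then gives an integer $r>0$ with $f_{*}^{r}(\gamma)=\gamma$, so $\gamma\in\Gamma'$. But $\gamma\ss U$ and $U\cap|\Gamma|=\0$ force $\gamma\notin\Gamma$; hence $\gamma$ fails the isolation condition defining $\Gamma$, and there exists $\gamma'\in\Gamma'\sm\{\gamma\}$ with $\gamma\cap\gamma'\ne\0$.

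The finish is to show $\gamma'\ss U$ and apply Corollary~\ref{transto}. If $\gamma'$ met $|\Gamma|$ it would meet some $\gamma''\in\Gamma$; the case $\gamma''\ne\gamma'$ exhibits $\gamma''$ as non-isolated in $\Gamma'$, contradicting $\gamma''\in\Gamma$, while $\gamma''=\gamma'$ forces $\gamma'\ss|\Gamma|$, incompatible with $\0\ne\gamma\cap\gamma'\ss U$. Thus $\gamma'\cap|\Gamma|=\0$ and, by connectedness, $\gamma'\ss U$. Being distinct complete geodesics meeting in $U$, $\gamma$ and $\gamma'$ meet transversely, so Corollary~\ref{transto} with $\alpha=\gamma'$ yields that $\{f_{*}^{n}(\gamma')\}_{n\in\Z}$ are pairwise distinct, contradicting $\gamma'\in\Gamma'$. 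The only real obstacle is the bookkeeping between $\Gamma$, $\Gamma'$, and $U$; the analytic content is already packaged in Corollaries~\ref{inGamma} and~\ref{transto}.
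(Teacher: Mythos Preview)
Your proof is correct and uses the same two corollaries (Corollary~\ref{inGamma} and Corollary~\ref{transto}) as the paper's proof; the only difference is the direction in which you run the final contradiction. The paper argues that Corollary~\ref{transto} forces $\gamma\in\Gamma$ (since any $\gamma'\in\Gamma'$ transverse to $\gamma$ would have distinct iterates, contradicting $\gamma'\in\Gamma'$), and then contradicts this with the fact that the $\sigma_{n_k}$ meet $\gamma$ transversely (via Proposition~\ref{spirals}) while lying in $U$, disjoint from $|\Gamma|$. You instead start from $\gamma\ss U\Rightarrow\gamma\notin\Gamma$, produce the witness $\gamma'\in\Gamma'$ directly, and apply Corollary~\ref{transto} to it. These are two phrasings of the same logical content.

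One minor remark: your detour showing $\gamma'\ss U$ is unnecessary. Corollary~\ref{transto} applies to \emph{any} simple closed geodesic $\alpha$ transverse to $\gamma$, with no hypothesis that $\alpha\ss U$; so once you have $\gamma'\in\Gamma'$ with $\gamma'\ne\gamma$ and $\gamma'\cap\gamma\ne\0$, you can invoke Corollary~\ref{transto} immediately.
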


\begin{proof}

If  $\gamma\in\LL^{\sigma}_{+}$, then  there exists a lift $\wt\gamma$ of $\gamma$ and  a sequence $\{\sigma_{n_{k}}\}_{k\ge0}$ with lift $\{\wt\sigma_{n_{k}}\}_{k\ge0}$  such that $\wt \sigma_{n_{k}}\to\wt\gamma$ as $k\to\infty$. If $\gamma$ is closed, by Proposition~\ref{spirals}, we can assume that the $\sigma_{n_{k}}$ meet $\gamma$ transversely. Corollary~\ref{inGamma} then implies that $\gamma\in\Gamma'$. By Corollary~\ref{transto} it follows that $\gamma\in\Gamma$. Since $\sigma\ss U$, it follows that $\sigma$ and each $\sigma_{n}$, $n\in\Z$, are disjoint from each geodesic in $\Gamma$ and thus from $\gamma$. This contradiction implies the proposition.
\end{proof}

Since $\delta U$ consists of closed geodesics we have,

\begin{cor}\label{ssU}

$|\LL^{\sigma}_{+}|\ss U$.

\end{cor}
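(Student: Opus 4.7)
My plan is to reduce the claim to showing that $\gamma\cap|\Gamma|=\0$ for each $\gamma\in\LL^{\sigma}_{+}$, with the two pieces of input being the periodicity of $U$ under $f_{*}$ and the fact, from Proposition~\ref{homeoR}, that every leaf of $\LL^{\sigma}_{+}$ is homeomorphic to $\R$ and hence non-closed.

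Given $\gamma\in\LL^{\sigma}_{+}$, I would apply Definition~\ref{defnLL} with the specific choice $p=n_{U}$ to obtain a subsequence $\{\sigma_{n_{k}n_{U}}\}_{k\ge 0}$ of $\{\sigma_{n}\}_{n\ge 0}$ and lifts $\wt\sigma_{n_{k}n_{U}}\to\wt\gamma$ in $\Delta$. Because $f_{*}^{n_{U}}(U)=U$, each $\sigma_{n_{k}n_{U}}\ss U$; projecting the Euclidean limit down to $L$ shows every point of $\gamma$ is a limit of points of the $\sigma_{n_{k}n_{U}}$, so $\gamma\ss\ol U$ inside $L$.

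Next I would rule out $\gamma\cap|\Gamma|\ne\0$. If $x\in\gamma\cap\alpha$ for some $\alpha\in\Gamma$, Proposition~\ref{homeoR} forces $\gamma\ne\alpha$, since $\alpha$ is closed while $\gamma$ is homeomorphic to $\R$; since distinct complete geodesics meeting at a common point of $L$ must cross transversely there, I can select lifts $\wt\alpha$ of $\alpha$ and $\wt\gamma$ of $\gamma$ crossing transversely at a lift of $x$. Uniform Euclidean convergence $\wt\sigma_{n_{k}n_{U}}\to\wt\gamma$ then forces $\wt\sigma_{n_{k}n_{U}}$ to cross $\wt\alpha$ transversely for large $k$, whence $\sigma_{n_{k}n_{U}}$ meets $\alpha$ in $L$. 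This contradicts $\sigma_{n_{k}n_{U}}\ss U$ together with $\alpha\ss|\Gamma|$, since $U$ is a component of $L\sm|\Gamma|$.

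The conclusion $\gamma\cap|\Gamma|=\0$, combined with $\gamma\ss\ol U$ and the decomposition $\ol U\sseq U\cup|\Gamma|$, gives $\gamma\ss U$. The only genuinely delicate point is the choice $p=n_{U}$, which is needed so that every selected $\sigma_{n_{k}n_{U}}$ actually lies in $U$ itself rather than in some other translate $f_{*}^{j}(U)$; everything else is a routine consequence of uniform Euclidean convergence in $\Delta$ preserving transversality locally in the interior.
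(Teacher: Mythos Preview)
Your proof is correct and is essentially a fleshed-out version of the paper's one-line justification ``Since $\delta U$ consists of closed geodesics.'' The paper leaves implicit exactly the two steps you make explicit: that a limit of the $\sigma_{n}$ cannot cross any element of $\Gamma$ (your transversality argument), and that Proposition~\ref{homeoR} rules out the limit coinciding with an element of $\Gamma$; your choice $p=n_{U}$ to force the approximating curves into $U$ itself is the natural way to pin down the component, and this is the same mechanism implicit in the paper's construction of $X^{\sigma}_{+}$ via indices divisible by $n_{U}$.
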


Let $\beta\in\LL^{\sigma}_{+}$. Recall that the closure of a geodesic is the disjoint union of geodesics.  If $\Lambda_{\beta}$ denotes the lamination consisting of the geodesic in $\ol\beta$, then,

\begin{lemma}\label{convto}

 $\Lambda_{\beta}\ss\LL^{\sigma}_{+}$.

\end{lemma}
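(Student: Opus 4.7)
The plan is to verify Definition~\ref{defnLL} for $\gamma$, for each fixed $p > 0$, by a two-stage approximation: first approach a chosen lift $\wt\gamma$ of $\gamma$ by translates $\wt\beta_{i}$ of a fixed lift of $\beta$, then approach each $\wt\beta_{i}$ by lifts of $\sigma_{n_{k}p}$ using the hypothesis $\beta \in \LL^{\sigma}_{+}$, and finally diagonalize.

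For the first stage, fix $\wt\gamma$ and $\wt x \in \wt\gamma$. Since $\gamma \ss \ol\beta$, the projection $x$ of $\wt x$ is the limit of points $x_{i} \in \beta$. Lift to $\wt x_{i} \to \wt x$ and let $\wt\beta_{i}$ be the (unique, by simplicity of $\beta$) lift of $\beta$ through $\wt x_{i}$. After passing to a subsequence, the unit tangents to $\wt\beta_{i}$ at $\wt x_{i}$ converge to some unit tangent $v$ at $\wt x$; thus $\wt\beta_{i}$ converges uniformly on $\Delta$ to the geodesic $\wt\beta_{\infty}$ through $\wt x$ with tangent $v$. Its projection is a leaf of $\Lambda_{\beta}$ through $x$. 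Since leaves of $\Lambda_{\beta}$ are pairwise disjoint (a general property of the closure of a simple geodesic), the unique leaf through $x$ is $\gamma$; and since $\gamma$ is simple, its lifts through $\wt x$ are unique, so $\wt\beta_{\infty} = \wt\gamma$.

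For the second stage, the definition of $\LL^{\sigma}_{+}$ applied to $\beta$ with the fixed $p$ furnishes a subsequence $\{\sigma_{n_{k}p}\}_{k \ge 0}$ and, for each $i$, lifts $\wt\sigma^{(i)}_{n_{k}p} \to \wt\beta_{i}$ as $k \to \infty$. Choose $k_{i}$ strictly increasing so that $\wt\sigma^{(i)}_{n_{k_{i}}p}$ lies within Euclidean distance $1/i$ of $\wt\beta_{i}$; the diagonal sequence $\wt\sigma^{(i)}_{n_{k_{i}}p}$ then converges to $\wt\gamma$, and $\{\sigma_{n_{k_{i}}p}\}_{i \ge 0}$ is itself a subsequence of $\{\sigma_{n_{k}p}\}_{k \ge 0}$. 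For any other lift $T\wt\gamma$, applying the deck transformation $T$ to the diagonal lifts (which remain lifts of the same $\sigma_{n_{k_{i}}p}$) and invoking continuity of $T$ on $\D^{2}$ gives convergence to $T\wt\gamma$; so a single subsequence witnesses the convergence condition for every lift of $\gamma$. Since $p$ was arbitrary, $\gamma$ meets Definition~\ref{defnLL}; and $\gamma \ss U$ then follows exactly as in Corollary~\ref{ssU} via Proposition~\ref{homeoR}, because $\gamma \in \LL^{\sigma}_{+}$ forces $\gamma$ to be non-closed while $\delta U$ consists only of closed geodesics.

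The main obstacle is justifying $\wt\beta_{\infty} = \wt\gamma$ in the first stage: a priori the geodesic limit $\wt\beta_{\infty}$ could be the lift of a different leaf of $\Lambda_{\beta}$ through $x$, or it could share $\wt x$ with $\wt\gamma$ at a different tangent direction. Both possibilities are eliminated by disjointness of the leaves of $\Lambda_{\beta}$ (forcing $\gamma$ to be the \emph{only} leaf through $x$) combined with simplicity of $\gamma$ (forcing its lift through $\wt x$ to be unique). Once this identification is in place, everything else is a routine diagonal construction and deck-transformation equivariance.
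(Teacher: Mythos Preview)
Your proof is correct and follows essentially the same two-stage diagonalization as the paper: first approximate $\wt\gamma$ by lifts $\wt\beta_{i}$ of $\beta$, then approximate each $\wt\beta_{i}$ by lifts of the $\sigma_{n_{k}p}$, then diagonalize and transport to other lifts by deck transformations. The only difference is in how you obtain $\wt\beta_{i}\to\wt\gamma$: the paper simply asserts that, since $\gamma\ss\ol\beta$, one can choose lifts $\wt\beta_{\ell}$ whose endpoints on $\Si$ are within $1/\ell$ of those of $\wt\gamma$, whereas you argue via convergence of unit tangents and then identify the limit with $\wt\gamma$ using disjointness of leaves of $\Lambda_{\beta}$ and simplicity of $\gamma$. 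Both are valid; the paper's endpoint formulation is a bit more direct, while your tangent-vector argument makes the identification $\wt\beta_{\infty}=\wt\gamma$ fully explicit. Your final sentence, verifying $\gamma\ss U$ via Proposition~\ref{homeoR}, addresses a point the paper leaves implicit.
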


\begin{proof}

Suppose $\gamma\in\Lambda_{\beta}$, $\wt\gamma$ is a lift of $\gamma$,  and $p>0$ is an integer.  Since $\beta\in\LL^{\sigma}_{+}$, there exists a subsequence $\{\sigma_{n_{k}p}\}_{k\ge0}$ of $\{\sigma_{n}\}_{n\ge0}$ such that if $\wt\beta$ is a lift of $\beta$ there exists a lift $\{\wt\sigma_{n_{k}p}\}_{k\ge0}$  of $\{\sigma_{n_{k}p}\}_{k\ge0}$ such that $\wt\sigma_{n_{k}p}\to\wt\beta$ as $k\to\infty$. Fix the sequence $\{\sigma_{n_{k}p}\}_{k\ge0}$  but not the sequence $\{\wt\sigma_{n_{k}p}\}_{k\ge0}$  of lifts.

Since $\gamma\ss\ol\beta$, we can choose lifts $\wt\beta_{\ell}$ of $\beta$ such that each endpoint on $\Si$ of $\wt\beta_{\ell}$ is at a distance less than $1/\ell$ from the corresponding endpoint of $\wt\gamma$. Inductively, suppose integers $k_{1}<\cdots<k_{\ell-1}$ and lfts $\wt\sigma_{n_{k_{i}}p}$ of $\sigma_{n_{k_{i}}p}$ 
have been chosen so that the corresponding endpoints on $\Si$ of $\wt\beta_{i}$ and $\wt\sigma_{n_{k_{i}}p}$ are at a distance less than $1/i$, $1\le i\le\ell-1$. Since each $\wt\beta_{\ell}$ is a lift of $\beta$, there exists  an integer $k_{\ell}>k_{\ell}-1$ and lift $\wt\sigma_{n_{k_{\ell}}p}$ of $\sigma_{n_{k_{\ell}}p}$ such that   the corresponding endpoints on $\Si$ of $\wt\beta_{\ell}$ and $\wt\sigma_{n_{k_{\ell}}p}$ are at a distance less than $1/\ell$. Then $\wt\sigma_{n_{k_{\ell}}p}\to\wt\gamma$ as $\ell\to\infty$. Let $\wt\gamma'$ be any lift of $\gamma$. Then there exists a deck transformation $T$ so that $\wt\gamma' = T(\wt\gamma)$. Then $\{T(\wt\sigma_{n_{k_{\ell}}p})\}_{\ell\ge0}$ is a   lift of the sequence $\{\sigma_{n_{k_{\ell}}p}\}_{\ell\ge0}$ and $T(\wt\sigma_{n_{k_{\ell}}p})\to T(\wt\gamma) = \wt\gamma'$ as $\ell\to\infty$ so $\gamma\in\LL^{\sigma}_{+}$ as desired.
\end{proof}

By compactness of $L'$ we have,

\begin{cor}

There exist minimal laminations that are subsets of $\LL^{\sigma}_{+}$.

\end{cor}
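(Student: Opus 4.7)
The plan is to apply Zorn's lemma to the family $\mathcal{F}$ of laminations that are subsets of $\LL^{\sigma}_{+}$, partially ordered by reverse inclusion. The family is nonempty: the preceding lemma produces some $\beta\in\LL^{\sigma}_{+}$, and Lemma~\ref{convto} gives $\Lambda_{\beta}\sseq\LL^{\sigma}_{+}$, so $\Lambda_{\beta}\in\mathcal{F}$.

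Given a chain $\{\Lambda_{i}\}_{i\in I}$ in $\mathcal{F}$, I produce an upper bound (in reverse inclusion) by defining $\Lambda_{\infty}=\bigcap_{i}\Lambda_{i}$, so that its support is $|\Lambda_{\infty}|=\bigcap_{i}|\Lambda_{i}|$. Each $|\Lambda_{i}|$ is closed in $L$, and each $|\Lambda_{i}|$ lies in the compact subsurface $L'$ introduced in the proof that $\LL^{\sigma}_{+}\ne\0$, because every leaf of $\LL^{\sigma}_{+}$ is a limit of simple closed geodesics of $L$ and such geodesics are all contained in $L'$. Compactness of $L'$ therefore forces the nested intersection $|\Lambda_{\infty}|$ to be nonempty. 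To see that $\Lambda_{\infty}$ is in fact a lamination, observe that any $x\in\bigcap_{i}|\Lambda_{i}|$ lies on a unique leaf of each $\Lambda_{i}$, and the nesting of the chain forces these leaves to coincide in a single geodesic $\lambda$ belonging to every $\Lambda_{i}$, hence to $\Lambda_{\infty}$. Thus $|\Lambda_{\infty}|$ is closed and is the union of the leaves of $\Lambda_{\infty}$, and clearly $\Lambda_{\infty}\sseq\Lambda_{i}\sseq\LL^{\sigma}_{+}$, so $\Lambda_{\infty}\in\mathcal{F}$.

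Zorn's lemma then yields a $\Lambda\in\mathcal{F}$ that is minimal under inclusion. Any lamination properly contained in $\Lambda$ is automatically a subset of $\LL^{\sigma}_{+}$, so it again lies in $\mathcal{F}$, contradicting the minimality of $\Lambda$ in $\mathcal{F}$. Hence $\Lambda$ is a minimal lamination in the sense of the paper's definition, completing the proof. The only delicate point—and the place where the hint about the compactness of $L'$ is used—is verifying that the intersection of a chain of sublaminations is again a nonempty lamination: leaf uniqueness at each point of the support takes care of the structural requirement, and the compactness of $L'$ takes care of nonemptiness.
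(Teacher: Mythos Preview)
Your proof is correct and is essentially a fleshed-out version of the paper's one-line justification ``By compactness of $L'$.''  The paper leaves the Zorn's lemma (or nested intersection) argument implicit; you spell it out.

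One small point worth tightening: your step ``any $x\in\bigcap_{i}|\Lambda_{i}|$ lies on a \emph{unique} leaf of each $\Lambda_{i}$'' relies on leaves of a lamination being pairwise disjoint.  The paper's formal definition of lamination does not state this explicitly, and later in the paper one sees that $\LL^{\sigma}_{+}$ itself can contain geodesics that cross (diagonals of a crown set), so an arbitrary set of geodesics from $\LL^{\sigma}_{+}$ need not have this property.  Disjointness of leaves is, however, the standard meaning of ``geodesic lamination'' and is used implicitly throughout the paper (e.g.\ in the notion of semi-isolated leaf and in ``transversely a Cantor set''), so your assumption is harmless in context.  If you want to avoid the issue entirely, simply apply Zorn's lemma to the poset of nonempty sublaminations of the single lamination $\Lambda_{\beta}$ rather than to all laminations inside $\LL^{\sigma}_{+}$: since $\Lambda_{\beta}$ is the closure of a simple geodesic, its leaves are automatically disjoint, and any minimal sublamination of $\Lambda_{\beta}$ is minimal in the absolute sense and lies in $\LL^{\sigma}_{+}$ by Lemma~\ref{convto}.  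This is almost certainly what the paper has in mind.
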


\begin{defn}

Let  $\Lambda^{\sigma}_{+}$ be a fixed minimal lamination which is a subset of $\LL^{\sigma}_{+}$.  

\end{defn}

We show in Proposition~\ref{lamuniq} that the choice of $\Lambda^{\sigma}_{+}$ is unique.

\begin{lemma}

The space  $|\Lambda^{\sigma}_{+}|$ has empty interior. The geodesics in $\Lambda^{\sigma}_{+}$  are the path components of $|\Lambda^{\sigma}_{+}|$. Further, $|\Lambda^{\sigma}_{+}| = \ol{\lambda}$ for any $\lambda\in\Lambda^{\sigma}_{+}$. 

\end{lemma}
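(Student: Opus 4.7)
The plan is to prove the three statements in the order $|\Lambda^{\sigma}_{+}|=\ol{\lambda}$, then empty interior, then path components, since each builds on the previous.

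For $|\Lambda^{\sigma}_{+}|=\ol{\lambda}$, fix any leaf $\lambda\in\Lambda^{\sigma}_{+}$; since $|\Lambda^{\sigma}_{+}|$ is closed, $\ol{\lambda}\sseq|\Lambda^{\sigma}_{+}|$. As noted before Lemma~\ref{convto}, $\ol{\lambda}$ is the disjoint union of simple geodesics and so supports a lamination $\Lambda_{\lambda}$. The key check is that each geodesic $\gamma\in\Lambda_{\lambda}$ is actually a leaf of $\Lambda^{\sigma}_{+}$: any point $y\in\gamma$ lies on some leaf $\mu\in\Lambda^{\sigma}_{+}$, and if $\mu\ne\gamma$ they would have distinct tangents at $y$, so arcs of $\lambda$ Hausdorff-converging to $\gamma$ would cross $\mu$ transversely near $y$. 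This contradicts either the disjointness of distinct leaves (if $\mu\ne\lambda$) or the simplicity of $\lambda$ (if $\mu=\lambda$). Hence $\Lambda_{\lambda}\sseq\Lambda^{\sigma}_{+}$, and minimality gives $\Lambda_{\lambda}=\Lambda^{\sigma}_{+}$, so $\ol{\lambda}=|\Lambda^{\sigma}_{+}|$.

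For empty interior, I would argue by contradiction: suppose $B\sseq|\Lambda^{\sigma}_{+}|$ is a nonempty open disk. Every point of $B$ lies on a leaf, and by the previous step the single leaf $\lambda$ is dense in $B$. A convergence argument parallel to the one in the preceding paragraph shows the leaves through $B$ have continuously varying tangent lines (any discontinuity would again yield two leaves through a common point), producing a continuous geodesic line field on $B$ whose integral curves are the leaves. The dense leaf $\lambda$ is then an integral curve whose closure covers an open disk. The main obstacle is producing a clean contradiction here: I would lift to $\wt{L}$ and show that infinitely many pairwise disjoint lifts of $\lambda$ must crowd into a compact region, contradicting proper discontinuity of the deck group in the style of the perpendicular-projection argument used in Proposition~\ref{spirals} and Corollary~\ref{transto}. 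Failing that, I would invoke the classical fact that the closure of any simple geodesic on a finite-area hyperbolic surface has Lebesgue measure zero, which rules out interior points immediately.

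For the path components, fix $\lambda\in\Lambda^{\sigma}_{+}$ and $x\in\lambda$, and choose a short geodesic arc $\tau$ through $x$ transverse to $\lambda$. By empty interior, $\tau\cap|\Lambda^{\sigma}_{+}|$ is a closed subset of $\tau$ with empty interior in $\tau$, hence totally disconnected. Any path in $|\Lambda^{\sigma}_{+}|$ beginning at $x$ must project to a locally constant function on $\tau\cap|\Lambda^{\sigma}_{+}|$ near $x$, so it remains in the leaf $\lambda$ locally; a standard connectedness argument propagates this along the entire path. Since distinct leaves of $\Lambda^{\sigma}_{+}$ are disjoint, $\lambda$ is exactly the path component of $x$, giving the last assertion.
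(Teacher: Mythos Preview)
Your arguments for $|\Lambda^{\sigma}_{+}|=\ol{\lambda}$ and for the identification of path components are correct and, if anything, more carefully spelled out than the paper's, which simply invokes minimality and the empty-interior statement, respectively.

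Where you diverge substantially from the paper is in the empty-interior argument, and here your primary approach has a genuine gap while your fallback, though correct, is heavy machinery. The proper-discontinuity idea does not yield a contradiction: a dense simple geodesic $\lambda$ will have infinitely many distinct lifts meeting any compact piece of $\wt L$, and this is perfectly compatible with a properly discontinuous deck action (it happens for every minimal geodesic lamination). Invoking the Birman--Series theorem that closures of simple geodesics have measure zero does settle the matter, but that result is far deeper than what is needed here. The paper's argument is a one-liner exploiting minimality directly: if $|\Lambda^{\sigma}_{+}|$ had nonempty interior, the frontier of that interior would support a proper nonempty sublamination, contradicting minimality; hence $|\Lambda^{\sigma}_{+}|$ would be an open (and closed) subsurface, so all of $L$, and $L$ would then carry a geodesic foliation, impossible since $L$ has negative Euler characteristic. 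This avoids any measure-theoretic input and stays within the elementary toolkit already in play.
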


\begin{proof}

If $|\Lambda^{\sigma}_{+}|$ has interior, then $|\Lambda^{\sigma}_{+}|$ is a surface without boundary  because the closure of any boundary component would be a  proper sublamination of $\Lambda^{\sigma}_{+}$ contradicting the fact that $\Lambda^{\sigma}_{+}$ is minimal. Thus, $|\Lambda^{\sigma}_{+}| = L$ is a foliated surface with negative Euler characteristic which is a contradiction. Thus, $|\Lambda^{\sigma}_{+}|$ has empty interior. It follows that the path componets of $|\Lambda^{\sigma}_{+}|$ are the geodesics in $\Lambda^{\sigma}_{+}$. That $|\Lambda^{\sigma}_{+}| = \ol{\lambda}$ for any $\lambda\in\Lambda^{\sigma}_{+}$ follows since $\Lambda^{\sigma}_{+}$ is minimal.
\end{proof}

The next corollary follows since $\Lambda^{\sigma}_{+}$ is a minimal lamination containing no  closed geodesic.

\begin{cor}

 $\Lambda^{\sigma}_{+}$ is transversely a Cantor set.

\end{cor}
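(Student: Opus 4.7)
The plan is to show that for any compact arc $T \ss L$ transverse to $|\Lambda^\sigma_+|$, the set $K = T \cap |\Lambda^\sigma_+|$ is a Cantor set, i.e.\ a nonempty compact, totally disconnected, perfect subset of $T$.

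Two of the three required properties come essentially for free. The set $K$ is compact because $|\Lambda^\sigma_+|$ is closed in $L$ and $T$ is a compact arc. By the preceding lemma, $|\Lambda^\sigma_+|$ has empty interior in $L$, so $K$ has empty interior in $T$; any closed subset of an arc with empty interior is totally disconnected (its connected components would otherwise be nondegenerate subarcs with nonempty interior).

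The substantive step is showing $K$ has no isolated points. Suppose, for contradiction, that some $p \in K$ is isolated in $K$. Choosing a lamination box around $p$ in which $T$ appears as a transverse section, the isolation of $p$ in $K$ says that the leaf $\lambda$ through $p$ is the unique leaf of $\Lambda^\sigma_+$ meeting a sufficiently small neighborhood of $p$. Propagating this condition along $\lambda$ via the local product structure of the lamination, one obtains an open neighborhood $V \supset \lambda$ in $L$ with $V \cap |\Lambda^\sigma_+| = \lambda$. Hence $\lambda$ is open in $|\Lambda^\sigma_+|$, and so $|\Lambda^\sigma_+| \sm \lambda$ is a closed sublamination of $\Lambda^\sigma_+$. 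Minimality of $\Lambda^\sigma_+$ forces $|\Lambda^\sigma_+| \sm \lambda = \0$, i.e.\ $|\Lambda^\sigma_+| = \lambda$, so $\lambda$ itself is closed in $L$.

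It remains to rule out this single-leaf configuration, and this is where the hypothesis that $\Lambda^\sigma_+$ contains no closed geodesic enters. The compact subsurface $L' \ss L$ introduced in the proof that $\LL^\sigma_+ \ne \0$ contains every simple closed geodesic of $L$, so $\pi^{-1}(L')$ contains every lift $\wt\sigma_n$ and, being closed, contains every uniform Euclidean limit of such lifts. Consequently every leaf of $\LL^\sigma_+$ lies in $L'$, so in particular $\lambda \ss L'$. Since $\lambda$ is a closed subset of the compact set $L'$, it is itself compact. But $\Lambda^\sigma_+ \ss \LL^\sigma_+$, so Proposition~\ref{homeoR} asserts that $\lambda$ is homeomorphic to $\R$, contradicting compactness. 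The main obstacle is exactly this last step: eliminating the single-leaf case requires coupling the no-closed-geodesic hypothesis (packaged in Proposition~\ref{homeoR}) with the containment $|\Lambda^\sigma_+| \ss L'$ to force a compact line, which cannot exist.
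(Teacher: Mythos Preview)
Your argument is correct and is essentially a detailed unpacking of the paper's one-line justification (``minimal lamination containing no closed geodesic''): compactness and total disconnectedness come from closedness of $|\Lambda^{\sigma}_{+}|$ and the empty-interior lemma, while perfectness is forced by minimality together with Proposition~\ref{homeoR}. Your use of the compact subsurface $L'$ to rule out the single-leaf case is a clean way to convert ``no closed leaf'' into a contradiction with $\lambda\cong\R$; the paper simply takes this implication for granted.
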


\begin{lemma}

Suppose $\wt \sigma_{n_{k}}\to\wt\lambda$ where $\ol\lambda = |\Lambda^{\sigma}_{+}|$.   Then $\wt \sigma_{n_{k}+r}\to\wt\lambda_{r} = \wt{f_{*}^{r}(\lambda)}$ and $\ol\lambda_{r} = |f_{*}^{r}(\Lambda^{\sigma}_{+})|$.

\end{lemma}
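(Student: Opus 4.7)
The first assertion is an iterated application of Lemma~\ref{f*seq}. Fix a lift $\wt f_{*}$ of $f_{*}$ and let $\wt f_{*}^{r}$ denote its $r$-fold composition; define $\wt\lambda_{r} := \wt f_{*}^{r}(\wt\lambda)$ and $\wt\sigma_{n_{k}+r} := \wt f_{*}^{r}(\wt\sigma_{n_{k}})$. These are lifts of $\lambda_{r} = f_{*}^{r}(\lambda)$ and $\sigma_{n_{k}+r} = f_{*}^{r}(\sigma_{n_{k}})$, respectively, and applying Lemma~\ref{f*seq} $r$ times propagates the hypothesis $\wt\sigma_{n_{k}} \to \wt\lambda$ to the conclusion $\wt\sigma_{n_{k}+r} \to \wt\lambda_{r}$.

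For the equality $\ol\lambda_{r} = |f_{*}^{r}(\Lambda^{\sigma}_{+})|$, the plan is to show that $f_{*}^{r}(\Lambda^{\sigma}_{+})$ is itself a minimal lamination with $\lambda_{r}$ as a leaf; then the previous lemma, applied to this new lamination, yields $|f_{*}^{r}(\Lambda^{\sigma}_{+})| = \ol{\lambda_{r}}$. Two items need to be verified: that the support of $f_{*}^{r}(\Lambda^{\sigma}_{+})$ is closed in $L$, and that $f_{*}^{r}(\Lambda^{\sigma}_{+})$ is minimal. For closedness, given a limit point $x$ of $|f_{*}^{r}(\Lambda^{\sigma}_{+})|$, lift nearby points of leaves $f_{*}^{r}(\mu_{k})$ to $\wt L$, extract a convergent subsequence of lifts, pull the convergence back through $\wt f_{*}^{-r}$ using Lemma~\ref{f*seq} to get a convergent sequence of lifts of the $\mu_{k} \in \Lambda^{\sigma}_{+}$, and observe that the limit must project into $|\Lambda^{\sigma}_{+}|$ because that set is closed; applying $\wt f_{*}^{r}$ back places $x$ on a leaf of the form $f_{*}^{r}(\mu)$. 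Minimality is immediate: $f_{*}^{r}$ is a bijection on simple geodesics with inverse $f_{*}^{-r}$, and both respect the relation ``sublamination of'', so proper sublaminations of $f_{*}^{r}(\Lambda^{\sigma}_{+})$ would correspond to proper sublaminations of $\Lambda^{\sigma}_{+}$, contradicting minimality of the latter.

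The principal subtlety is bookkeeping of lifts, specifically the compatibility of $\wt f_{*}^{r}$ with the deck transformation action. Because $\wt f_{*}^{r}$ is a lift of a self-map of $L$, it normalizes the deck group, so $\wt f_{*}^{r}\circ T = T' \circ \wt f_{*}^{r}$ for an appropriate deck transformation $T'$; this identity is what allows one to transfer statements about $\ol\lambda = |\Lambda^{\sigma}_{+}|$ on $L$ through the cover to statements about $\ol{\lambda_{r}}$, by converting translates of $\wt\lambda$ into translates of $\wt\lambda_{r}$. Once this compatibility is invoked, the rest of the argument is a routine transfer of the convergence hypothesis from $\lambda$ to $\lambda_{r}$ via Lemma~\ref{f*seq}.
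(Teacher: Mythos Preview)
The paper states this lemma without proof, evidently regarding both assertions as immediate from Lemma~\ref{f*seq}: $\wt f_{*}^{r}$ is a homeomorphism on the space of geodesics in $\Delta$ (via its continuous extension to $\Si$), so it carries the convergence $\wt\sigma_{n_{k}}\to\wt\lambda$ to $\wt\sigma_{n_{k}+r}\to\wt\lambda_{r}$ and transports the closure relation $\ol\lambda=|\Lambda^{\sigma}_{+}|$ to $\ol{\lambda_{r}}=|f_{*}^{r}(\Lambda^{\sigma}_{+})|$.

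Your argument is correct and fleshes out exactly this. Your final paragraph in fact contains the most direct route to the second assertion: because $\wt f_{*}^{r}$ normalizes the deck group, it takes the full preimage in $\Delta$ of $\lambda$ (respectively of $|\Lambda^{\sigma}_{+}|$) onto the full preimage of $\lambda_{r}$ (respectively of $|f_{*}^{r}(\Lambda^{\sigma}_{+})|$), and since it is a homeomorphism on the space of geodesics it preserves closures there; pushing down to $L$ gives $\ol{\lambda_{r}}=|f_{*}^{r}(\Lambda^{\sigma}_{+})|$ directly. The intermediate verification that $f_{*}^{r}(\Lambda^{\sigma}_{+})$ is a minimal lamination is valid but not needed for the stated conclusion.
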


\begin{lemma}\label{disjlams}

$\Lambda^{\sigma}_{+}$ and $f_{*}^{r}(\Lambda^{\sigma}_{+})$ have no transverse points of intersection.  Since $\Lambda^{\sigma}_{+}$ is minimal it follows that $\Lambda^{\sigma}_{+}$ and $f_{*}^{r}(\Lambda^{\sigma}_{+})$ are either disjoint or coincide.

\end{lemma}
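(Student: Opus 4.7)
The plan is to argue by contradiction, following the strategy of the earlier corollary that $f_{*}^{r}(\gamma)$ and $\gamma$ are disjoint or coincide. Suppose leaves $\alpha\in\Lambda^{\sigma}_{+}$ and $\beta\in f_{*}^{r}(\Lambda^{\sigma}_{+})$ cross transversely at some $x\in L$, and write $\beta=f_{*}^{r}(\mu)$ with $\mu\in\Lambda^{\sigma}_{+}$. By the preceding lemma $f_{*}^{r}(\Lambda^{\sigma}_{+})$ is again a minimal lamination, and by minimality of $\Lambda^{\sigma}_{+}$ the leaf $\mu$ is dense in $|\Lambda^{\sigma}_{+}|$. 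Because geodesic leaves that are close in position are close in tangent direction as well, some return of $\mu$ passes arbitrarily near $x$ nearly parallel to $\alpha$, and hence crosses $\beta$ transversely near $x$. Relabeling, I may therefore assume that $\lambda\in\Lambda^{\sigma}_{+}$ crosses $f_{*}^{r}(\lambda)$ transversely at some point $y\in L$.

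The main step is to upgrade this single transverse crossing to $N+1$ distinct crossings, where $N=\#(\sigma\cap\sigma_{r})$. Since $\lambda$ is dense in $|\Lambda^{\sigma}_{+}|$ and $y\in\lambda$, the curve $\lambda$ returns to every neighborhood of $y$ at infinitely many parameter values, each return nearly parallel to the tangent of $\lambda$ at $y$ by the local product (transversely Cantor) structure of a minimal geodesic lamination. Because $\lambda$ is simple, distinct returns meet $L$ at distinct points, and $f_{*}^{r}(\lambda)$, being transverse to $\lambda$ at $y$, crosses each nearby return transversely near $y$. This produces $N+1$ distinct transverse intersections $y_{1},\dots,y_{N+1}$ of $\lambda$ with $f_{*}^{r}(\lambda)$, sitting in disjoint neighborhoods $V_{1},\dots,V_{N+1}$ in $L$.

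Since $\lambda\in\LL^{\sigma}_{+}$, some subsequence has lifts $\wt\sigma_{n_{k}}\to\wt\lambda$, and by Lemma~\ref{f*seq} the corresponding lifts $\wt\sigma_{n_{k}+r}=\wt f_{*}^{r}(\wt\sigma_{n_{k}})\to\wt f_{*}^{r}(\wt\lambda)$. For each crossing $y_{j}$, by applying an appropriate deck transformation on each side I obtain lifts of $\sigma_{n_{k}}$ and of $\sigma_{n_{k}+r}$ converging to lifts of $\lambda$ and of $f_{*}^{r}(\lambda)$ that cross transversely above $y_{j}$. For $k$ large these approximating lifts cross transversely near $\wt y_{j}$, so in $L$ the curves $\sigma_{n_{k}}$ and $\sigma_{n_{k}+r}$ have a transverse crossing in each $V_{j}$, giving $\#(\sigma_{n_{k}}\cap\sigma_{n_{k}+r})\ge N+1$, contradicting $\#(\sigma_{n_{k}}\cap\sigma_{n_{k}+r})=\#(\sigma\cap\sigma_{r})=N$. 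The ``disjoint or coincide'' clause is then immediate: if the two laminations share a leaf $\lambda$, then by minimality of both $|\Lambda^{\sigma}_{+}|=\ol\lambda=|f_{*}^{r}(\Lambda^{\sigma}_{+})|$, forcing equality as laminations. The main obstacle is the density-to-many-crossings step in the second paragraph, which depends on the transverse Cantor structure of $\Lambda^{\sigma}_{+}$ to guarantee that the returns of $\lambda$ near $y$ are genuinely parallel and are therefore intersected transversely by $f_{*}^{r}(\lambda)$.
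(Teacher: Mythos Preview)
Your argument is correct and follows essentially the same route as the paper's proof: a single transverse crossing between $\Lambda^{\sigma}_{+}$ and $f_{*}^{r}(\Lambda^{\sigma}_{+})$ forces, by minimality, more than $N_{r}=\#(\sigma\cap\sigma_{r})$ such crossings, and near each one the approximating curves $\sigma_{n_{k}}$ and $\sigma_{n_{k}+r}$ (the latter via Lemma~\ref{f*seq}) must themselves cross, contradicting $\#(\sigma_{n_{k}}\cap\sigma_{n_{k}+r})=N_{r}$. The paper's version is slightly more streamlined: it does not bother reducing to a single leaf $\lambda$ crossing its own image $f_{*}^{r}(\lambda)$, but simply notes that one transverse intersection of the two minimal laminations yields infinitely many, picks $N_{r}+1$ of them in disjoint neighborhoods, and runs the same counting contradiction (relying on the preceding lemma, which fixes a dense leaf $\lambda$ with $\wt\sigma_{n_{k}}\to\wt\lambda$ and hence $\wt\sigma_{n_{k}+r}\to\wt f_{*}^{r}(\wt\lambda)$). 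Your first-paragraph reduction is thus harmless but unnecessary; once you know any leaf of $\Lambda^{\sigma}_{+}$ is dense and lies in $\LL^{\sigma}_{+}$, the deck-transformation step in your third paragraph already handles arbitrary crossings of the two laminations without first arranging that both sides come from the same leaf.
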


\begin{proof}

Let $N_{r} = \#(\sigma\cap  \sigma_{r}) = \#(\sigma_{n_{k}}\cap \sigma_{r+n_{k}})$. If $\Lambda^{\sigma}_{+}$ and $f_{*}^{r}(\Lambda^{\sigma}_{+})$ have one point of transverse intersection, they have infinitely many points of transverse intersection.  Choose disjoint neighborhoods $V_{j}$, $1\le j\le N_{r}+1$, of $N_{r}+1$ points of transverse intersetion of $\Lambda^{\sigma}_{+}$ and $f_{*}^{r}(\Lambda^{\sigma}_{+})$. By taking $k$ sufficiently large one can find a point of $\sigma_{n_{k}}\cap \sigma_{r+n_{k}}$ in each $V_{j}$. Since there are $N_{r}$ points in $\sigma_{n_{k}}\cap \sigma_{r+n_{k}}$ and $N_{r}+1$ of the $V_{j}$ this is a contradiction and the lemma follows.
\end{proof}

\subsection{Crown sets}\label{crsets}

If $V$ is a  component of $L\sm|\Lambda^{\sigma}_{+}|$, then a geodesic $\gamma^{+}_{0}\in\delta V$ determines a bi-infinite sequence $\{\gamma^{+}_{i}\}_{i\in\Z}\ss\delta V$ with lift $\{\wt\gamma^{+}_{i}\}_{i\in\Z}$ such that the terminal end of $\wt\gamma^{+}_{i}$ and initial end of $\wt\gamma^{+}_{i+1}$  share the endpoint $a_{i}$ on $\Si$.

\begin{lemma}

There are finitely many components $V$ of $L\sm|\Lambda^{\sigma}_{+}|$  and for each such $V$, $\delta V$ is a finite set.

\end{lemma}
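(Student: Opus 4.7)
The plan is to prove both finiteness assertions by a Gauss--Bonnet area computation applied to the internal completion $\widehat V$ of each component $V$ of $L\sm|\Lambda^{\sigma}_{+}|$. Each $\widehat V$ inherits a hyperbolic structure from $L$; its boundary $\delta V$ consists of complete geodesic leaves of $\Lambda^{\sigma}_{+}$; and, as described just before the statement of the lemma, these boundary geodesics assemble into bi-infinite chains of adjacent leaves meeting at ideal ``spike'' vertices $a_{i}\in\Si$. In addition, $\widehat V$ may contain cusps of $L$ (themselves ideal vertices) and closed geodesic boundary circles inherited from $\bd L$. Let $N(\widehat V)$ denote the total number of spike vertices of $\widehat V$. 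Gauss--Bonnet, with every spike contributing exterior angle $\pi$ and the rest of the boundary geodesic, yields
$${\rm area}(\widehat V) = \pi\,N(\widehat V) - 2\pi\,\chi(\widehat V).$$

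By Proposition~\ref{homeoR} every leaf of $\Lambda^{\sigma}_{+}$ is homeomorphic to $\R$, so no boundary component of $\widehat V$ coming from $\Lambda^{\sigma}_{+}$ can be a closed circle; each is a crown carrying at least one spike. A short case analysis on $\chi(\widehat V)$ --- when $\chi=1$, $\widehat V$ is an ideal polygon and must have $N\ge 3$, since two distinct simple geodesics cannot share both endpoints on $\Si$; when $\chi\le 0$ the identity already forces $N\ge 1$ --- yields the uniform bound ${\rm area}(\widehat V) \ge \pi$. The open regions $V$ are pairwise disjoint in $L$ and $|\Lambda^{\sigma}_{+}|$ has measure zero, so $\sum_{V}{\rm area}(\widehat V) \le {\rm area}(L) < \infty$ by Proposition~\ref{areahypsur}; the uniform lower bound then forces only finitely many components $V$. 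For each such $V$, solving the Gauss--Bonnet identity gives $N(\widehat V) = {\rm area}(\widehat V)/\pi + 2\,\chi(\widehat V) < \infty$, and since each border component of $\widehat V$ sits in a chain with only finitely many spikes, $|\delta V|$ is bounded in terms of $N(\widehat V)$ and hence is finite.

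The main obstacle is the uniform lower bound ${\rm area}(\widehat V)\ge\pi$, which requires excluding ``ideal bigon'' degeneracies; these are ruled out by the fact that two distinct simple geodesics of $L$ cannot share both endpoints on $\Si$. The remaining considerations --- cusps of $L$ and boundary circles of $\bd L$ sitting inside $V$, and the possibly nonorientable setting --- amount only to careful bookkeeping when writing down the Gauss--Bonnet formula, and do not affect the inequality.
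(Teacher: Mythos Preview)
Your argument is correct and rests on the same area idea as the paper's proof, but the technical route differs. The paper doubles the internal completion $X=\widehat V$ to obtain a standard hyperbolic surface $DX$ (citing \cite[Lemma~1]{cc:epstein}), and then invokes Proposition~\ref{areahypsur} together with the inequality $c+m+b+2g\ge 3$ to get ${\rm area}(DX)\ge 2\pi$, hence ${\rm area}(X)\ge\pi$; finiteness of $\delta V$ follows because each spike of $X$ becomes a cusp of $DX$, and infinitely many cusps would force infinite area. You instead apply Gauss--Bonnet directly to $\widehat V$ with its ideal spikes and do the case analysis on $\chi(\widehat V)$ by hand. The doubling trick buys the paper the ability to quote its already-stated area formula for standard surfaces, sidestepping the bigon/spike bookkeeping; your route is more self-contained but needs the explicit exclusion of ideal bigons and the observation that at least one spike is always present.

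One small wording issue: your phrase ``when $\chi\le 0$ the identity already forces $N\ge 1$'' is not what the Gauss--Bonnet identity says. The bound $N\ge 1$ comes from the fact that $V$ borders $|\Lambda^{\sigma}_{+}|$ and every border leaf is a copy of $\R$, hence contributes a spike; the identity then gives ${\rm area}\ge\pi$. The logic is fine, but the sentence should be rephrased.
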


\begin{proof}

Let $V$ be a component of $L\sm|\Lambda^{\sigma}_{+}|$,  $X$ be the internal completion of $V$, and  $DX$ denote the double of $X$. Then,  by~\cite[Lemma 1]{cc:epstein},  $DX$ is a standard hyperbolic surface. If $\delta V$ is an infinite set, then $DX$ has infintely many cusps and thus infinite area contradicting the fact that $L$ is of finite type.  By Proposition~\ref{areahypsur} and the remark in Section~\ref{fintype}, ${\rm area}\,(DX)\ge2\pi$ so ${\rm area}\,(X)\ge\pi$. Since ${\rm area}\,(L)$ is finite, it follows that there are finitely many components of the complement of $|\Lambda^{\sigma}_{+}|$.
\end{proof}

Let $V$ be a component of $L\sm|\Lambda^{\sigma}_{+}|$ and $\{\gamma^{+}_{i}\}_{i\in\Z}\ss\delta V$  with lift $\{\wt\gamma^{+}_{i}\}_{i\in\Z}$  be as above. Let $\wt V$ be the lift of $V$ with $\{\wt\gamma^{+}_{i}\}_{i\in\Z}\ss\delta\wt V$.  Then the covering projection $\wt V\to V$ maps  the closure of the convex hull of $\{\wt\gamma^{+}_{i}\}_{i\in\Z}$ onto a set $C_{+}\ss V$.

\begin{defn}

The set $C_{+}$    is called the \emph{crown set associated} to $\{\gamma^{+}_{i}\}_{i\in\Z}$. 

\end{defn}

\begin{lemma}

A crown set is either homeomorphic to,

\begin{enumerate}

\item A disk with $p$ points removed from the  boundary\upn{;}

\item A punctured disk with $p$ points removed from the  boundary\upn{;}

\item An annulus with $p$ points removed from one boundary\upn{;}

\item A M\"obius strip with $p$ points removed from its boundary.

\end{enumerate}

\end{lemma}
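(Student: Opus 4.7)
The plan is to split into two main cases according to whether the lifts $\{\wt\gamma^+_i\}_{i\in\Z}$ are all distinct in $\Delta$ or form a finite cycle, and in the bi-infinite case to analyze the stabilizer $G$ of the closure of the convex hull $C$ in the deck group of $\wt L\to L$.

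First, suppose $\wt\gamma^+_{i+p}=\wt\gamma^+_i$ for some least $p>0$; then $\wt\gamma^+_0,\ldots,\wt\gamma^+_{p-1}$ are distinct and form a cycle of geodesics in $\Delta$ meeting at the ideal vertices $a_0,\ldots,a_{p-1}$, so $C$ is a compact ideal $p$-gon (necessarily $p\ge 3$, since ideal bigons do not exist). Any isometry of $\Delta$ preserving such a polygon permutes its ideal vertices and hence lies in the finite dihedral symmetry group, consisting of rotations (elliptic, with fixed point the center of the polygon) and reflections (fixing a geodesic pointwise); neither type occurs in the deck group of a standard hyperbolic surface, which is torsion-free and acts freely on $\Delta$. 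So $C$ has trivial stabilizer and projects homeomorphically to $C_+$, a closed disk with the $p$ ideal vertices at infinity removed, giving case (1).

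Otherwise, the lifts are all distinct. Since $\delta V$ is finite by the preceding lemma, the downstairs sequence $\{\gamma^+_i\}$ must repeat; picking $\gamma^+_j=\gamma^+_k$ with $j<k$, let $T$ be the unique deck transformation with $T(\wt\gamma^+_j)=\wt\gamma^+_k$. I would argue $T$ acts on the index set as a shift rather than a shift-with-reversal: otherwise $T^2$ fixes the (infinitely many) ideal endpoints $a_i\in\Si$ setwise and pointwise, forcing $T^2=\id$ and contradicting torsion-freeness of the deck group. The same argument, applied to any element of $G$, shows $G$ contains no reversals, so $G=\langle T\rangle$ where $T$ is the shift by the least period $p>0$. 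Since $T$ acts freely on $\Delta$, it must be parabolic, hyperbolic, or a glide reflection.

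Finally, I would analyze each of the three possibilities for $T$. If $T$ is parabolic with fixed point $a_\infty\in\Si$, the $a_i$ accumulate only at $a_\infty$ and $C/\langle T\rangle$ is a punctured disk (cusp at $a_\infty$) with $p$ boundary points removed, case (2). If $T$ is hyperbolic with axis $\ell$ and fixed points $a_{\pm\infty}$, then $\ell\ss C$ (as a limit of geodesic segments with endpoints running to $a_{\pm\infty}$) and $C/\langle T\rangle$ is an annulus with the closed geodesic $\ell/\langle T\rangle$ as one boundary and the $p$ projected geodesics $\gamma^+_0,\ldots,\gamma^+_{p-1}$ meeting at $p$ missing ideal points as the other, case (3). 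If $T$ is a glide reflection along an axis $\ell$ the same picture holds but the orientation reversal produces a M\"obius strip with $p$ boundary points removed, case (4). The main obstacle lies in the structural analysis of $G$ in the previous paragraph -- using torsion-freeness to exclude reversals -- and then tracking how the infinite boundary structure of $C$ (the $\wt\gamma^+_i$ and the $a_i$) descends to the claimed quotient topology in each subcase.
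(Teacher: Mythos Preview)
Your argument is sound in cases (1)--(3) and follows the same finite-versus-infinite dichotomy as the paper, with more explicit group-theoretic detail. However, there is a genuine gap in case (4).

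The shift $T$ generating $G=\operatorname{Stab}(C)$ can never be a glide reflection. A glide reflection interchanges the two sides of its axis, and its only fixed points on $\Si$ are the endpoints of that axis; since $T$ shifts the chain, the $a_i$ accumulate exactly on those endpoints, so the axis would have to be $\wt\rho$. But the chain $\{\wt\gamma^+_i\}$, and hence $C\sm\wt\rho$, lies entirely on one side of $\wt\rho$, so a glide reflection along $\wt\rho$ sends $C$ off itself and cannot belong to $G$. Thus $T$ is always parabolic or hyperbolic, and $C/\langle T\rangle$ is a punctured disk or an annulus---never a M\"obius strip.

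The M\"obius case arises by a different mechanism, the one the paper actually invokes: when $T$ is hyperbolic with axis $\wt\rho$, one must ask whether $\rho=\pi(\wt\rho)$ is two-sided or one-sided in $L$. If $\rho$ is one-sided there is a glide reflection $S$ in the deck group with $S^2=T$; this $S$ stabilizes $\wt\rho\ss C$ but not $C$ itself. Consequently $\pi|_C:C\to C_+$ is \emph{not} the quotient by $\operatorname{Stab}(C)$: along $\wt\rho$ it identifies the larger $\langle S\rangle$-orbits, so $C_+$ is obtained from the annulus $C/\langle T\rangle$ by a further $2$-to-$1$ folding of the boundary circle $\wt\rho/\langle T\rangle$ onto $\rho$, producing a M\"obius strip (equivalently, $C_+=(C\cup S(C))/\langle S\rangle$). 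Your implicit identification $C_+\cong C/G$ is therefore the step that fails in this case.
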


\begin{proof}

If $\{\wt\gamma^{+}_{i}\}_{i\in\Z}$ is a finite set as in Figure~\ref{CSleft}, then $C_{+}$ is a disk with $p$ points removed from the  boundary. Otherwise, $\{\wt\gamma^{+}_{i}\}_{i\in\Z}$ is an infinite set. The points $a_{i}$, $i\in\Z$, limit on two points in $\Si$. If these two points coincide as in Figure~\ref{CScusp}, then $C_{+}$ is a punctured disk with $p$ points removed from the  boundary. If these two points are distinct as in Figure~\ref{CSright}, then $C_{+}$ is either an annulus with $p$ points removed from one boundary or a M\"obius strip with $p$ points removed from its boundary depending on whether the geodesic $\wt\rho\ss\Delta$ with endpoints the two limit points projects under the covering projection to a two sided or one sided simple closed curve  $\rho\ss L$.
\end{proof}

\begin{figure}[t]
\begin{center}
\begin{picture}(200,140)(-20,-140)
\rotatebox{270}{\includegraphics[width=140pt]{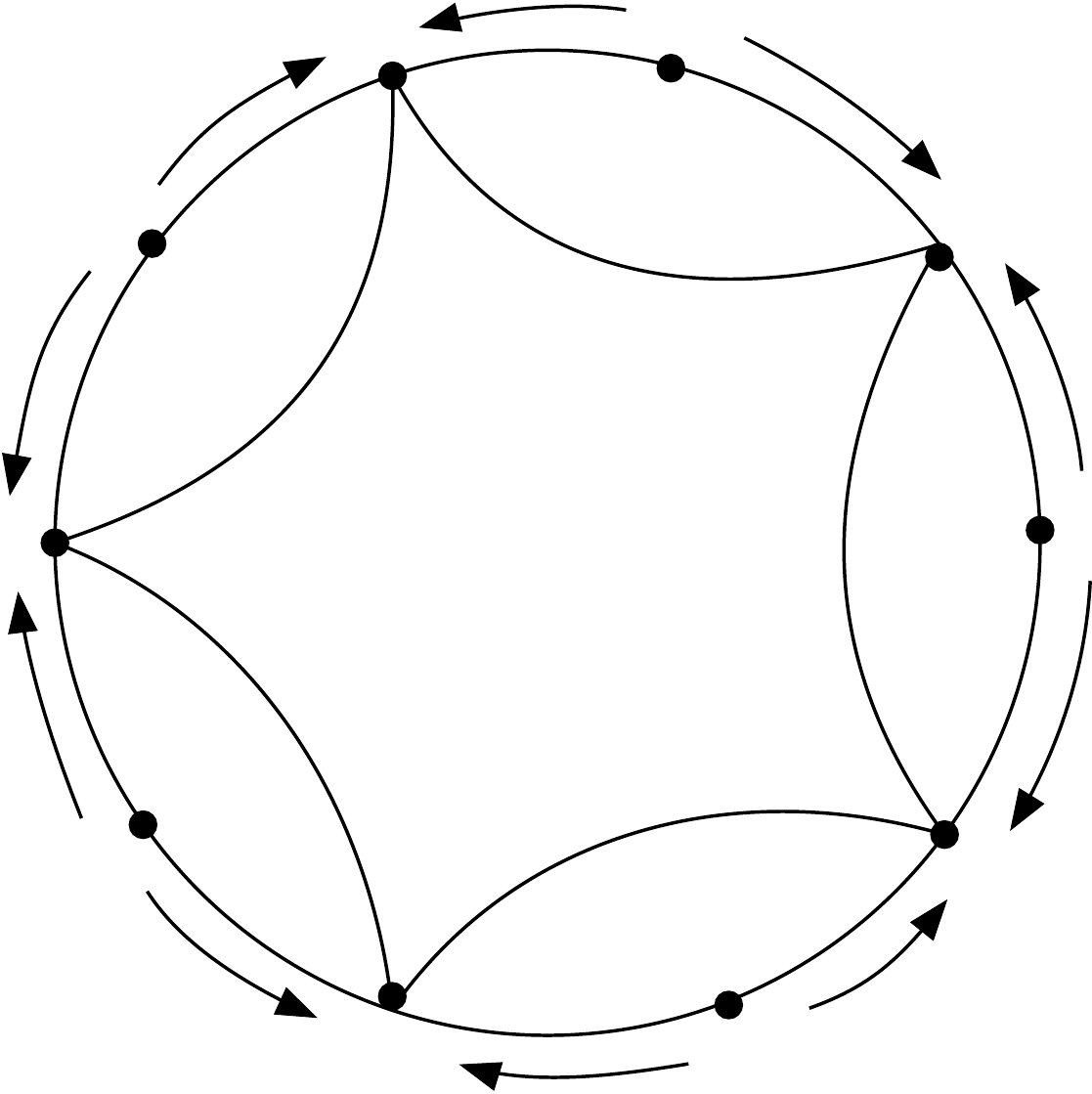}}

\put (-77,-55){\small$\wt C_{+}$}
\put (-102,-85){\small$\tilde{\gamma}^{+}_{0}$}
\put (-47,-85){\small$\tilde{\gamma}^{+}_{2}$}
\put (-74,-105){\small$\tilde{\gamma}^{+}_{1}$}
\put (-140,-96){\small$b_{0}$}
\put (-3,-90){\small$b_{2}$}
\put (-70,-145){\small$b_{1}$}
\put (-32,-128){\small$a_{1}$}
\put (-114,-129){\small$a_{0}$}

\end{picture}
\caption{Lift of simply connected crown set $C_{+}$}\label{CSleft}
\end{center}
\end{figure}

\begin{defn}

If the crown set $C_{+}$ is an annulus with $p$ points removed from one boundary, then $C_{+}$ is called a \emph{crown}~\cite[Figure~4.2]{bca} and the simple closed curve boundary component $\rho$ of $C_{+}$ is called the \emph{rim} of the crown $C_{+}$.

\end{defn}

\begin{rem}

In the case that $C_{+}$ is a M\"obius strip with $p$ points removed from its boundary, the geodesic  $\wt\rho\ss\Delta$ projects under the covering projection to the center geodesic of the M\"obius strip.

\end{rem}

\begin{figure}[h]
\begin{center}
\begin{picture}(180,40)(60,-40)
\rotatebox{270}{\includegraphics[width=50pt]{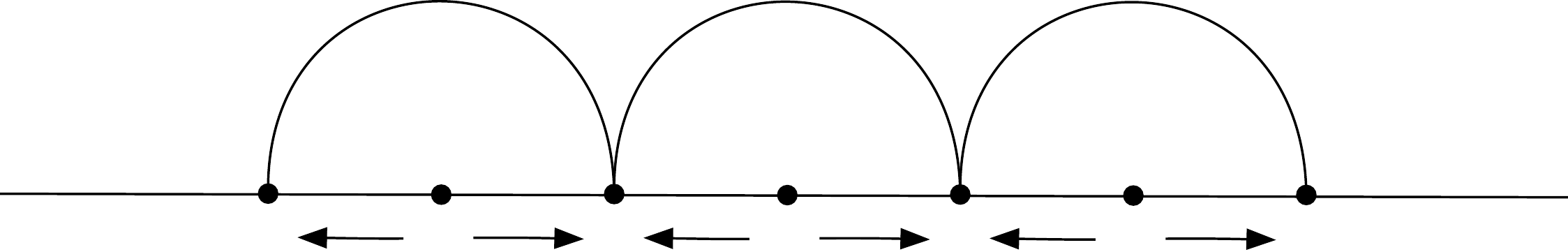}}

\put (-125,-3){\small$\tilde{C_{+}}$}
\put (-159,2){\small$\tilde{\gamma}^{+}_{1}$}
\put (-228,2){\small$\tilde{\gamma}^{+}_{0}$}
\put (-89,2){\small$\tilde{\gamma}^{+}_{2}$}
\put (-228,-49){\small$b_{0}$}
\put (-89,-49){\small$b_{2}$}
\put (-157,-49){\small$b_{1}$}
\put (-192,-49){\small$a_{0}$}
\put (-124,-49){\small$a_{1}$}
\put (-30,-30){$\dots$}
\put (-290,-30){$\dots$}

\end{picture}
\caption{Lift of crown set $C_{+}$ containing a cusp}\label{CScusp}
\end{center}
\end{figure}

Let $R^{\sigma}$ be the set consisting of all rims of  crowns of $\Lambda^{\sigma}_{+}$ and all center geodesics of   M\"obius strips that are crown sets.    Let $U^{\sigma}$ be the   component of $L\sm |R^{\sigma}|$ containing $\Lambda^{\sigma}_{+}$.  Then $R^{\sigma}_{k} = f_{*}^{k}(R^{\sigma})$ is the set consisting of all rims of crowns of $f_{*}^{k}(\Lambda^{\sigma}_{+})$ and all center geodesics of   M\"obius strips that are crown sets of $f_{*}^{k}(\Lambda^{\sigma}_{+})$. Let $U^{\sigma}_{k}$ be the component of $L\sm |R^{\sigma}_{k}|$ containing $f_{*}^{k}(\Lambda^{\sigma}_{+})$.

\begin{figure}[b]
\begin{center}
\begin{picture}(180,100)(60,-100)
\rotatebox{270}{\includegraphics[width=100pt]{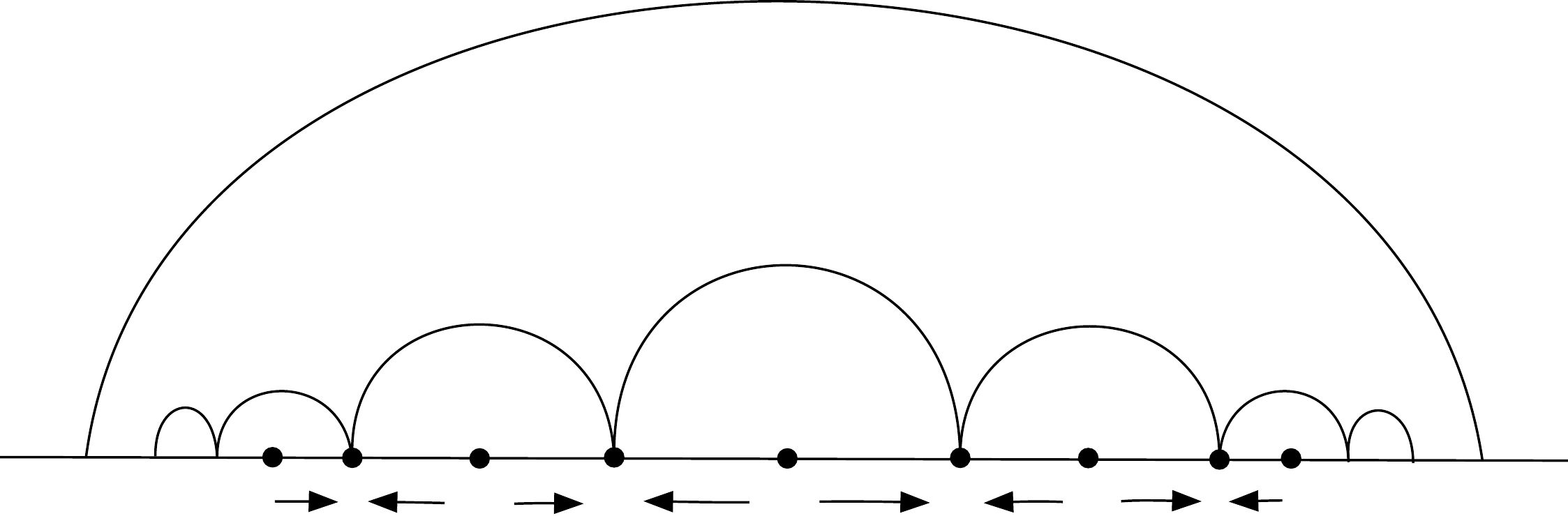}}

\put (-153,-49){\small$\tilde{\gamma}^{+}_{1}$}
\put (-215,-60){\small$\tilde{\gamma}^{+}_{0}$}
\put (-96,-60){\small$\tilde{\gamma}^{+}_{2}$}
\put (-58,-20){\small$\tilde{\rho}$}
\put (-215,-99){\small$b_{0}$}
\put (-96,-99){\small$b_{2}$}
\put (-153,-99){\small$b_{1}$}
\put (-187,-99){\small$a_{0}$}
\put (-121,-99){\small$a_{1}$}
\put (-120,-29){\small$\tilde{C_{+}}$}
\put (-30,-85){$\dots$}
\put (-286,-85){$\dots$}

\end{picture}
\caption{Lift of crown $C_{+}$ with rim $\rho$}\label{CSright}
\end{center}
\end{figure}

\begin{lemma}\label{minr}

For some minimal $r\ge 0$, $U^{\sigma} = U^{\sigma}_{r}$ and $\Lambda^{\sigma}_{+} = f_{*}^{r}(\Lambda^{\sigma}_{+})$.

\end{lemma}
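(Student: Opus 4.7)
The plan is first to reduce the lemma to producing some $r > 0$ with $f_*^r(\Lambda^{\sigma}_+) = \Lambda^{\sigma}_+$: the rims of crowns and the center geodesics of M\"obius-strip crown sets are intrinsic to the lamination (they do not involve $f$), so $f_*^r(\Lambda^{\sigma}_+) = \Lambda^{\sigma}_+$ forces $R^{\sigma}_r = f_*^r(R^{\sigma}) = R^{\sigma}$, whence $U^{\sigma}_r$ and $U^{\sigma}$ are each the unique component of $L \sm |R^{\sigma}|$ containing $\Lambda^{\sigma}_+$ and hence coincide. The minimal such $r$ then satisfies the lemma.

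To produce such an $r$ I would argue by contradiction: suppose $f_*^k(\Lambda^{\sigma}_+) \ne \Lambda^{\sigma}_+$ for every $k > 0$. By Lemma~\ref{disjlams} the iterates $\{f_*^k(\Lambda^{\sigma}_+)\}_{k\in\Z}$ are then pairwise disjoint, giving an infinite family of pairwise disjoint minimal geodesic laminations in $L$. Setting $|\Lambda_\infty| := \ol{\bigcup_{k\ge 0}|f_*^k(\Lambda^{\sigma}_+)|}$ and noting that a uniform limit in $\Delta$ of pairwise disjoint simple geodesics is again a simple geodesic either disjoint from or coincident with each of the others, $|\Lambda_\infty|$ is the support of a geodesic lamination $\Lambda_\infty$ in $L$ in which each $f_*^k(\Lambda^{\sigma}_+)$ appears as a minimal sublamination.

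The main obstacle is then to contradict the existence of infinitely many minimal sublaminations of the single geodesic lamination $\Lambda_\infty$ in the finite-type surface $L$. My plan is to adapt the area argument already used to prove each $\delta V$ finite: by doubling and Proposition~\ref{areahypsur} every complementary component of any geodesic lamination in $L$ has area at least $\pi$. Building $\Lambda_\infty$ by successive adjunctions $\Lambda_0 \subset \Lambda_0 \cup \Lambda_1 \subset \cdots$, each new $\Lambda_{k+1}$ is contained in a complementary component of $\Lambda_0 \cup \cdots \cup \Lambda_k$ and strictly increases the total topological complexity of the union's complement (for instance, the total number of border components and ideal vertices of complementary pieces), a quantity bounded in terms of $\mathrm{area}(L)/\pi$ via the $\pi$-area lower bound on each piece. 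The process must therefore terminate after finitely many steps, contradicting the infinite family. Hence some $r > 0$ satisfies $f_*^r(\Lambda^{\sigma}_+) = \Lambda^{\sigma}_+$, and the least such $r$ completes the proof.
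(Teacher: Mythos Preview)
Your argument is correct but takes a genuinely different route from the paper. The paper argues by a case split on whether $\Lambda^{\sigma}_+$ has any crowns. If not, every complementary crown set is a disk, punctured disk, or M\"obius strip, none of which can contain a minimal geodesic lamination with non-closed leaves; hence $f_*(\Lambda^{\sigma}_+)$ must meet $\Lambda^{\sigma}_+$, and Lemma~\ref{disjlams} gives equality already with $r=1$. If there is at least one crown, the paper observes that a transverse intersection between rims $\rho_i\in R^{\sigma}_i$ and $\rho_j\in R^{\sigma}_j$ would force a transverse intersection of $f_*^i(\Lambda^{\sigma}_+)$ and $f_*^j(\Lambda^{\sigma}_+)$, contradicting Lemma~\ref{disjlams}; so the closed geodesics in $\bigcup_j R^{\sigma}_j$ are pairwise disjoint-or-equal, and finite genus forces this family to be finite, giving $R^{\sigma}_r=R^{\sigma}$ and $U^{\sigma}_r=U^{\sigma}$ for some $r$. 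The ``no room in a non-annular crown set'' argument then gives $f_*^r(\Lambda^{\sigma}_+)=\Lambda^{\sigma}_+$.

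Your approach instead appeals to the general structural fact that a finite-area hyperbolic surface supports only boundedly many pairwise disjoint minimal geodesic laminations. This is more conceptual and avoids the case split, while the paper's argument is more self-contained: it uses only the crown-set classification already in hand and the elementary bound on disjoint simple closed curves coming from finite genus. Your complexity-count sketch for the finiteness is essentially right---each newly adjoined $\Lambda_{k+1}$ contributes at least one new ideal vertex between its own asymptotic semi-isolated leaves, and Gauss--Bonnet bounds the total number of ideal vertices across all complementary pieces by a fixed multiple of $\mathrm{area}(L)/\pi$---but making that step fully rigorous (in particular, checking that the new ideal vertices are genuinely new and not absorbed by asymptotics to leaves of earlier $\Lambda_j$) takes a little more care than you indicate.
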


\begin{proof}

If no crown set of $\Lambda^{\sigma}_{+}$ is a crown, then $L\sm\Lambda^{\sigma}_{+}$ consists of crown sets, so $\Lambda^{\sigma}_{+}\cap f_{*}(\Lambda^{\sigma}_{+})\ne\0$. By Lemma~\ref{disjlams}, $\Lambda^{\sigma}_{+} = f_{*}(\Lambda^{\sigma}_{+})$.

Otherwise, $\Lambda^{\sigma}_{+}$ has at least one crown. First note that if an element of $R^{\sigma}_{i}$ meets an element  of $R^{\sigma}_{j}$ transversely, it follows that  $f_{*}^{i}(\Lambda^{\sigma}_{+})$ meets  $f_{*}^{j}(\Lambda^{\sigma}_{+})$ transversely which contradicts Lemma~\ref{disjlams}. Thus, an element of $R^{\sigma}_{i}$ either coincides with or is disjoint from an element of $R^{\sigma}_{j}$. It follows that the set  $\bigcup_{j=0}^{\infty}R^{\sigma}_{j}$  is finite and there is a smallest $r\ge 0$ so that  $R^{\sigma}_{r }= R^{\sigma}$ and $U^{\sigma}_{r }= U^{\sigma}$. Thus, $\Lambda^{\sigma}_{+}\cap f_{*}^{r}(\Lambda^{\sigma}_{+})\ne\0$. Again by Lemma~\ref{disjlams}, $\Lambda^{\sigma}_{+} = f_{*}^{r}(\Lambda^{\sigma}_{+})$.
\end{proof}

Let $\lambda\in\Lambda^{\sigma}_{+}$ have lift $\wt\lambda$ with endpoints $a_{0}, a_{1}\in\Si$ as in Figure~\ref{b1b2}.  Let $[a_{0}, a_{1}]$ be one of the two intervals on $\Se$ with endpoints $a_{0}, a_{1}$. Let $p>0$ be an integer such that $f_{*}^{p}(\lambda) = \lambda$, $f_{*}^{p}$ preserves the orientation of $\lambda$, and $f^{p}$ is orientation preserving on $L$. Of necessity $p$ is a multiple of $r$. Let $\wt f_{*}^{p}$ be a lift of $f_{*}^{p}$ such that $\wt f_{*}^{p}(\wt\lambda) = \wt\lambda$.

\begin{figure}[h]
\begin{center}
\begin{picture}(200,140)(80,-140)
\rotatebox{270}{\includegraphics[width=145pt]{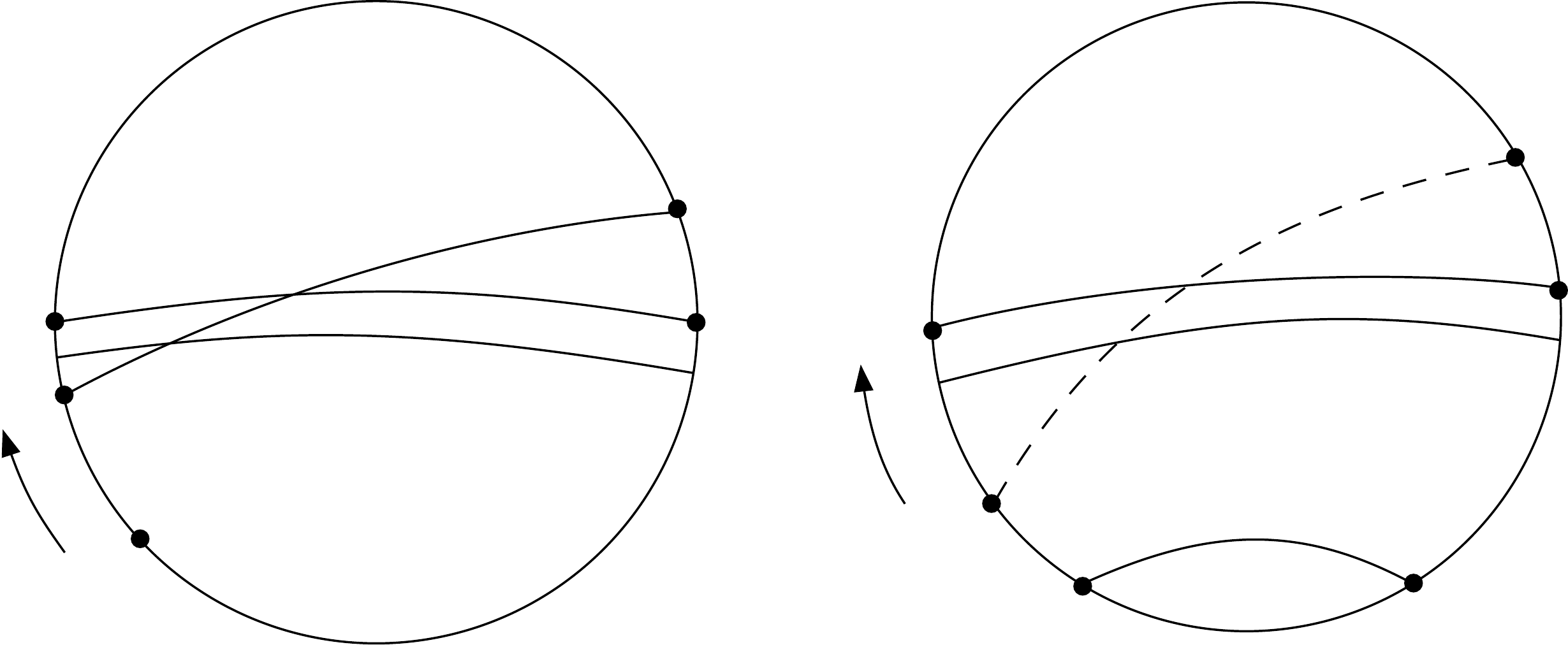}}

\put (-315,-65){\small$\tilde{\lambda}$}
\put (-250,-50){\small$\zeta$}
\put (-265,-87){\small$\tilde{\lambda}_{0}$}
\put (-355,-72){\small$a_{0}$}
\put (-193,-76){\small$a_{1}$}
\put(-335,-127){\small$A_{0}$}
\put(-350,-94){\small$A$}
\put (-196,-48){\small$B$}
\put(-143,-119){\small$A_{0}$}
\put (-8,-37){\small$B_{0}$}
\put (-175,-76){\small$A=a_{0}$}
\put (3,-67){\small$a_{1}$}
\put (-118,-138){\small$b_{1}$}
\put (-30,-138){\small$b'_{1}$}
\put (-120,-63){\small$\tilde{\lambda}$}
\put (-77,-85){\small$\tilde{\lambda}_{1}$}
\put (-77,-117){\small$\tilde{\lambda}_{2}$}
\put (-60,-45){\small$\tilde{\sigma}$}

\end{picture}
\caption{Lemma~\ref{perleaf}}\label{b1b2}
\end{center}
\end{figure}

\begin{lemma}\label{perleaf}

If $\wt\lambda$ as above is approached by lifts of geodesics in $\Lambda^{\sigma}_{+}$ having endpoints in $(a_{0}, a_{1})\cap\Si$, then,

\begin{enumerate}

\item The points $a_{0}, a_{1}$ are contracting fixed points of $\wh f_{*}^{p}$ in the interval $[a_{0}, a_{1}]$\upn{;}\label{partone}

\item $\wh f_{*}^{p}$ has exactly one fixed point $b_{1}\in(a_{0}, a_{1})\ss\Se$ which is expanding. Further $b_{1}\in\Si$.\label{parttwo} 

\end{enumerate}

\end{lemma}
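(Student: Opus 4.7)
Write $F=\wh f_{*}^{p}|_{[a_{0},a_{1}]}$. Since $f^{p}$ preserves the orientation of $L$ and $\wt f_{*}^{p}$ preserves the oriented geodesic $\wt\lambda$, the map $F$ is an orientation-preserving homeomorphism of the arc $[a_{0},a_{1}]\subset\Si$ fixing both endpoints. My plan is to analyze this one-dimensional dynamics by a \emph{leaf-interleaving} argument. The key recurring fact is that any two lifts of leaves of $\Lambda^{\sigma}_{+}$ are either equal or disjoint as geodesics in $\Delta$ (because the leaves are pairwise disjoint simple geodesics in $L$), whereas two geodesics in $\Delta$ cross if and only if their endpoints interleave on $\Si$. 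Moreover, Lemma~\ref{disjlams} applied to $\lambda\in\Lambda^{\sigma}_{+}\cap f_{*}^{p}(\Lambda^{\sigma}_{+})$ gives $f_{*}^{p}(\Lambda^{\sigma}_{+})=\Lambda^{\sigma}_{+}$, so $\wt f_{*}^{p}(\wt\mu)$ is again a lift of a leaf of $\Lambda^{\sigma}_{+}$ whenever $\wt\mu$ is.

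\textbf{Existence of an interior fixed point.} Suppose for contradiction that $F$ has no fixed point in $(a_{0},a_{1})$; then either $F(x)>x$ throughout $(a_{0},a_{1})$ or $F(x)<x$ throughout, and by replacing $F$ with $F^{-1}$ if necessary I assume $F(x)>x$. By the hypothesis I choose a lift $\wt\mu$ of a leaf $\mu\in\Lambda^{\sigma}_{+}$ with endpoints $B_{0}<B_{1}$ in $(a_{0},a_{1})$, with $B_{0}$ very close to $a_{0}$ and $B_{1}$ very close to $a_{1}$. Continuity of $F$ at $a_{0}$ together with $F(a_{0})=a_{0}$ forces $F(B_{0})<B_{1}$ once $B_{0}$ is close enough to $a_{0}$; combined with $F(B_{0})>B_{0}$ and $F(B_{1})>B_{1}$ the four endpoints lie on $\Si$ in the interleaved order $B_{0}<F(B_{0})<B_{1}<F(B_{1})$. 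Hence $\wt\mu$ and $\wt f_{*}^{p}(\wt\mu)$ cross in $\Delta$. Since they are lifts of leaves of $\Lambda^{\sigma}_{+}$ and must therefore be disjoint, this is a contradiction, and an interior fixed point $b_{1}\in(a_{0},a_{1})$ exists.

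\textbf{Dynamical type and uniqueness.} Now I analyze the signs of $F-\id$ on the two subintervals $(a_{0},b_{1})$ and $(b_{1},a_{1})$. Picking $\wt\mu$ with $B_{0}$ close to $a_{0}$ and $B_{1}$ close to $a_{1}$ (so $B_{0}\in(a_{0},b_{1})$ and $B_{1}\in(b_{1},a_{1})$), the two ``consistent-sign'' configurations (both signs positive or both negative) again produce the alternating order $B_{0},F(B_{0}),B_{1},F(B_{1})$ on $\Si$, hence a crossing of $\wt\mu$ and $\wt f_{*}^{p}(\wt\mu)$, contradicting leaf-disjointness. The desired ``mixed-sign'' configuration---$F<\id$ on $(a_{0},b_{1})$ and $F>\id$ on $(b_{1},a_{1})$---makes $a_{0}$ and $a_{1}$ contracting and $b_{1}$ expanding, and once it is established the basins of $a_{0}$ and $a_{1}$ together cover $(a_{0},a_{1})\setminus\{b_{1}\}$ so that no second interior fixed point can exist. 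Since $b_{1}\in(a_{0},a_{1})\subset\Si$ this proves both (\ref{partone}) and (\ref{parttwo}).

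\textbf{Main obstacle.} The clean leaf-interleaving argument handles the existence of $b_{1}$ and excludes both consistent-sign configurations, but the hardest remaining case is the other mixed-sign configuration, in which $b_{1}$ attracts from both sides. Here the endpoints of $\wt\mu$ and $\wt f_{*}^{p}(\wt\mu)$ are \emph{nested} rather than interleaved, so no immediate crossing contradiction arises. I expect this case to be excluded by invoking the full strength of Definition~\ref{defnLL} (as noted in the remark preceding the lemma): the convergent subsequences $\wt\sigma_{n_{k}p}\to\wt\lambda$ of axes of closed geodesics---available for \emph{every} $p>0$---combined with the fact that $b_{1}$ is not a parabolic fixed point of the deck group (since $|\Lambda^{\sigma}_{+}|\subset L'$ is bounded away from cusps by Proposition~\ref{cuspnhb}) should yield a contradiction from the forced degeneration under iteration of the lifts $\wt f_{*}^{jp}(\wt\sigma_{n_{k}p})$ toward the single point $b_{1}$.
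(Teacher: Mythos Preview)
Your leaf-interleaving argument is clean for three of the four sign configurations, but the gap you flag in the ``main obstacle'' paragraph is real and is exactly the heart of the lemma. When $F>\id$ near $a_{0}$ and $F<\id$ near $a_{1}$ (so $b_{1}$ attracts and $a_{0},a_{1}$ repel), every leaf-lift $\wt\mu$ with endpoints in $(a_{0},a_{1})$ gives $\wt f_{*}^{p}(\wt\mu)$ \emph{nested} inside $\wt\mu$, and no iterate produces a crossing; your interleaving scheme simply cannot see this case. Your proposed rescue via degeneration toward $b_{1}$ is not a proof: a sequence of leaf-lifts shrinking to a single ideal point is not by itself a contradiction, since individual lifts may leave any fundamental domain while the leaves themselves stay in $L'$.

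The paper resolves this by abandoning leaves of $\Lambda^{\sigma}_{+}$ (whose lifts have both endpoints on one side of $\wt\lambda$) and instead using a lift $\wt\sigma$ of the closed geodesic $\sigma$ that \emph{crosses} $\wt\lambda$: one endpoint $A_{0}\in(a_{0},a_{1})$, the other $B_{0}\notin[a_{0},a_{1}]$. The monotone iterates $A_{k}=\wh f_{*}^{kp}(A_{0})$ converge to some $A$; if $A\in(a_{0},a_{1})$, pick a leaf-lift $\wt\lambda_{0}$ straddling $A$, and invoke the ``every $p$'' clause of Definition~\ref{defnLL} to produce lifts $\wt\sigma_{n_{k}p}\to\wt\lambda_{0}$. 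For large $k$ the geodesic $\wt f_{*}^{n_{k}p}(\wt\sigma)$ has one endpoint near $A$ (between those of $\wt\sigma_{n_{k}p}$) and the other outside $[a_{0},a_{1}]$, so it crosses $\wt\sigma_{n_{k}p}$. Both are lifts of the \emph{same} simple curve $\sigma_{n_{k}p}$, a contradiction. Thus $A\in\{a_{0},a_{1}\}$ with no case split, and attraction of $a_{0}$ (say) follows immediately. The contraction of $a_{1}$ and the uniqueness of $b_{1}$ are then obtained by iterating a nearby leaf-lift $\wt\lambda_{1}$ forward and backward and, if the backward limit $\wt\lambda_{2}$ is nondegenerate, re-applying part~(\ref{partone}) to $\wt\lambda_{2}$ on the complementary arc.

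Two smaller points: first, your uniqueness claim (``the basins of $a_{0}$ and $a_{1}$ cover $(a_{0},a_{1})\setminus\{b_{1}\}$'') does not follow from knowing only the signs at the two chosen points $B_{0},B_{1}$; further fixed points between them are not excluded by your argument. Second, $(a_{0},a_{1})\subset\Se$, not $\Si$, when $\bd L\ne\emptyset$; the conclusion $b_{1}\in\Si$ is an additional assertion, which in the paper's argument comes out for free because all the converging endpoints lie in $\Si$.
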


\begin{proof}

Any closed geodesic meeting $U^{\sigma}$ intersects $\Lambda^{\sigma}_{+}$ transversely. Thus $\sigma$ intersects $\lambda$ transversely.  Let $\wt\sigma$ be a lift of $\sigma$ with endpoints $A_{0},B_{0}\in\Si$ with $A_{0}\in(a_{0},a_{1})$ and $B_{0}\notin[a_{0},a_{1}]$. Thus $\wt\sigma\cap\wt\lambda\ne\0$. Let $A_{k} = \wh f_{*}^{kp}(A_{0})$ and $B_{k} = \wh f_{*}^{kp}(B_{0})$. Since $\wh f_{*}^{p}$ is a homeomorphism, it follows that the sequence $\{A_{k}\}_{k\ge 0}$ is monotone in $(a_{0}, a_{1})$ and that $A_{k}\to A\in[a_{0}, a_{1}]\cap\Si$. Similarly $B_{k}\to B\in\Si\sm(a_{0},a_{1})$.  Let  $\zeta$ be the geodesic in $\Delta$ with  endpoints $A,B$. Then $\wh f_{*}^{jp}(\wt\sigma)\to\zeta$ as $j\to\infty$..

Refer to Figure~\ref{b1b2}~(left). If $a_{0}\ne A\ne a_{1}$, by assumption, there exists a lift $\wt\lambda_{0}$ of a leaf of $\Lambda^{\sigma}_{+}$ with one endpoint in the interval $(a_{0},A)\cap\Si$ and the other endpoint in the interval $(A,a_{1})\cap\Si$. Since $\Lambda^{\sigma}_{+}\ss\LL^{\sigma}_{+}$, there exist a subsequence $\{\sigma_{n_{k}p}\}_{k\ge0}$ of the sequence $\{\sigma_{n}\}_{n\ge0}$ with lift  $\{\wt\sigma_{n_{k}p}\}_{k\ge0}$ such that $\wt\sigma_{n_{k}p}\to\wt\lambda_{0}$ as $k\to\infty$. Thus, for arbitarily large values of the integer $k$,  $\wt\sigma_{n_{k}p}$ has   one endpoint in the interval $(a_{0},A)\cap\Si$ and the other endpoint in the interval $(A,a_{1})\cap\Si$. Thus, for arbitrarily large integers $k$, $\wt\sigma_{n_{k}p}\cap\wt f_{*}^{n_{k}p}(\wt\sigma)\ne\0$. Since $\wt\sigma_{n_{k}p}$ and $\wt f_{*}^{n_{k}p}(\wt\sigma)$ are both lifts of $\sigma_{n_{k}p}$, it follows that $\sigma_{n_{k}p}$ is not simple. This contradiction implies that  $A=a_{0}$ or $A=a_{1}$. Without loss assume that $A=a_{0}$. The case $A=a_{1}$ is similar. From the above discussion, the point $a_{0}$ is a contracting fixed point of $\wh f_{*}^{p}$ in the interval $[a_{0}, a_{1}]\ss\Se$ and the only fixed point in the interval $[a_{0}, A_{0})$.

Refer to Figure~\ref{b1b2}~(right).  Choose $\lambda_{1}\in\Lambda^{\sigma}_{+}$ with lift $\wt\lambda_{1}$ having one endpoint in $(a_{0},A_{0})\cap\Si$. Then, $\wt f_{*}^{np}(\wt\lambda_{1})\to \wt\lambda$ as $n\to\infty$  so $a_{1}$ is also a contracting fixed point  of $\wh f_{*}^{p}$ in the interval $[a_{0}, a_{1}]$.
  
If the endpoints of $\wt f_{*}^{-np}(\wt\lambda_{1})$ converge to $b_{1}\ne b'_{1}\in(a_{0},a_{1})\cap\Si$ as $n\to\infty$, then $b_{1}$ and $b'_{1}$ are both contracting fixed points of $\wh f_{*}^{-p}$ and thus expanding fixed points of $\wh f_{*}^{p}$ in the subinterval $[b_{1},b'_{1}]\ss\Se$ which contains the points $a_{0}, a_{1}$. On the other hand, since $p$ is a multiple of $r$, by Lemma~\ref{minr} it follows that the $\wt f_{*}^{-np}(\wt\lambda_{1})$ are lifts of leaves of $\wt\Lambda^{\sigma}_{+}$ which converge to the lift $\wt\lambda_{2}$ of a leaf of $\Lambda^{\sigma}_{+}$ with endpoints $b_{1},b'_{1}\in\Si$.  Part (\ref{partone}) of this lemma applied to this interval $[b_{1},b'_{1}]$ and the geodesic $\wt\lambda_{2}$ implies that $b_{1}$ and $b'_{1}$ are both  contacting fixed points of $\wh f_{*}^{p}$ in the interval $[b_{1},b'_{1}]$. This contradiction implies $b_{1}=b'_{1}$ and (\ref{parttwo}) follows.
\end{proof}

\begin{figure}[h]
\begin{center}
\begin{picture}(200,140)(50,-140)
\rotatebox{270}{\includegraphics[width=140pt]{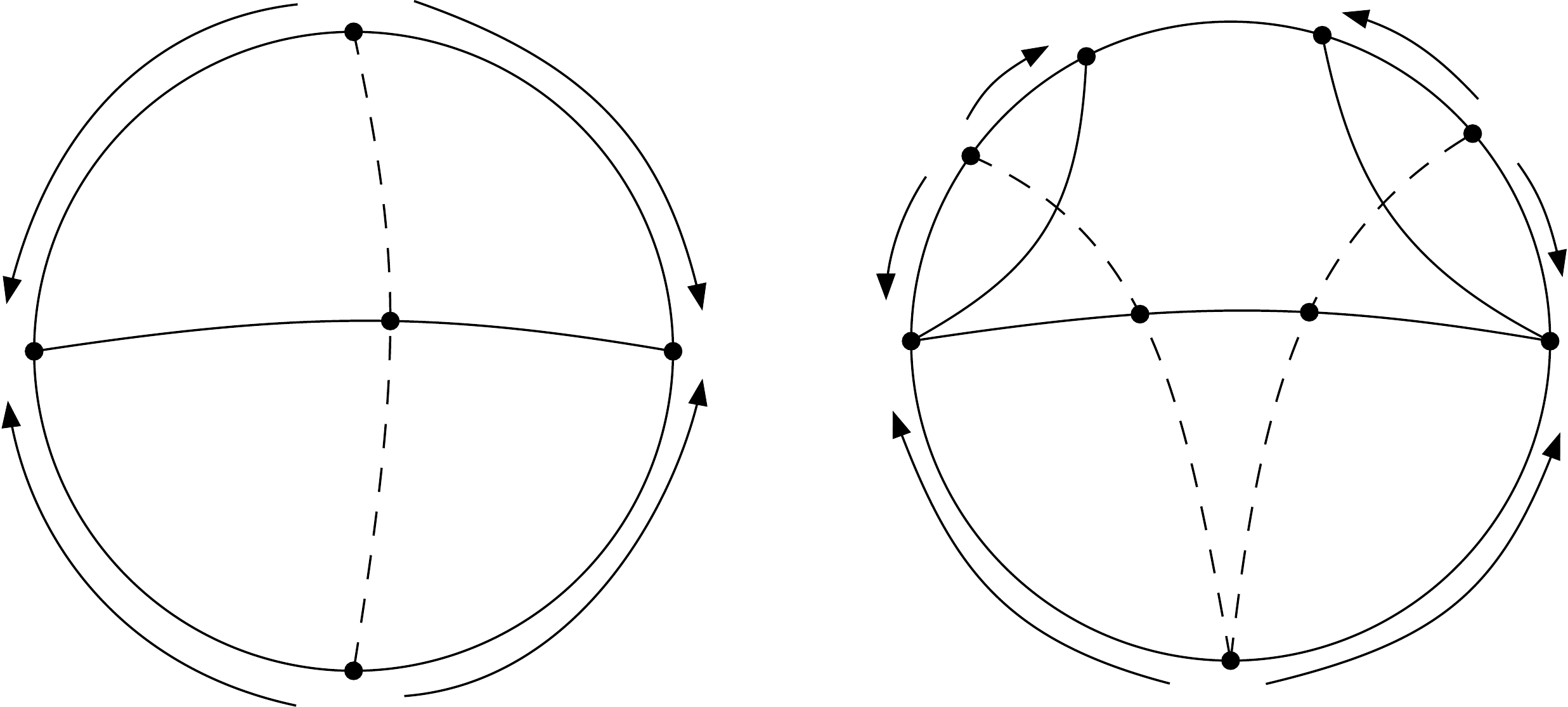}}

\put (-280,-62){\small$\tilde{\lambda}$}
\put (-96,-35){\small$\tilde{\gamma}^{+}_{0}$}
\put (-55,-35){\small$\tilde{\gamma}^{+}_{2}$}
\put (-81,-58){\small$\tilde{\lambda} = \tilde{\gamma}^{+}_{1}$}
\put (-75,-20){\small$\wt C_{+}$}
\put (-132,-30){\small$b_{0}$}
\put (-15,-28){\small$b_{2}$}
\put (-245,-2){\small$b_{0}$}
\put (-243,-144){\small$b_{1}$}
\put (-69,-142){\small$b_{1}$}
\put (-317,-70){\small$a_{0}$}
\put (-173,-70){\small$a_{1}$}
\put (-143,-68){\small$a_{0}$}
\put (0,-70){\small$a_{1}$}
\put (-229,-42){\small$\wt\gamma$}
\put (-87,-95){\small$\wt\gamma^{-}_{1}$}
\put (-58,-95){\small$\wt\gamma^{-}_{2}$}

\end{picture}
\caption{Two Cases}\label{TwoPics}
\end{center}

\end{figure}

Further,

\begin{prop}\label{bothsides}

If $\wt\lambda$ is as in \emph{Lemma~\ref{perleaf}}, then either,

\begin{enumerate}

\item $\wt\lambda$ is appoached from above and below by leaves of $\wt\Lambda^{\sigma}_{+}$ and there exist exactly four fixed points $a_{0},a_{1},b_{0},b_{1}$ of $\wh f_{*}^{p}$ in $\Se$  as in \emph{Figure~\ref{TwoPics} (left)}\upn{;}\label{prtone}

\item $\wt\lambda$ is bordered above by the lift of a crown set $C_{+}$ as in \emph{Figure~\ref{TwoPics} (right)} and $\lambda = \gamma^{+}_{1}$ belongs to a cycle $\{\gamma^{+}_{i}\}_{i\in\Z}\ss\delta C_{+}$ with lifts   $\{\wt\gamma^{+}_{i}\}_{i\in\Z}$  such that, for $i\in\Z$,  \label{prttwo}

\begin{enumerate}

\item $\wt\gamma^{+}_{i}$ and $\wt\gamma^{+}_{i+1}$ share endpoint $a_{i}\in\Si$\upn{;}

\item $a_{i}$ is a contracting fixed point of $\wh f_{*}^{p}$ in $\Se$\upn{;}\label{part2}

\item There exists $b_{i}\in(a_{i-1},a_{i})\cap\Si$ which is an expanding fixed point of $\wh f_{*}^{p}$ in $\Se$ and the only fixed point of $\wh f_{*}^{p}$ in $(a_{i-1},a_{i})$.

\end{enumerate}

\end{enumerate}

\end{prop}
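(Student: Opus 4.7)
The plan is to split on whether $\wt\lambda$ is approached from both sides by leaves of $\wt\Lambda^{\sigma}_{+}$ and apply Lemma~\ref{perleaf} on each relevant arc. In case~(\ref{prtone}), $\wt\lambda$ is approached from both sides, so Lemma~\ref{perleaf} applies with either of the two closed arcs of $\Se$ cut off by $\{a_{0}, a_{1}\}$ playing the role of $[a_{0}, a_{1}]$. On the arc $(a_{0}, a_{1})$ the lemma gives that $a_{0}, a_{1}$ are contracting and produces a unique expanding fixed point $b_{1}$; on the complementary arc $(a_{1}, a_{0})$ the same argument produces a unique expanding fixed point $b_{0}$. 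By the uniqueness clauses of Lemma~\ref{perleaf}(\ref{parttwo}) applied on each arc, these four points exhaust the fixed points of $\wh f_{*}^{p}$ in $\Se$, establishing (\ref{prtone}).

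In case~(\ref{prttwo}), $\wt\lambda$ is approached from only one side, so the opposite side of $\wt\lambda$ in $\Delta$ must contain a gap of $\wt\Lambda^{\sigma}_{+}$ that projects to a component $V$ of $L \sm |\Lambda^{\sigma}_{+}|$; I identify $\wt\lambda$ with one of the boundary geodesics of the associated lift of a crown set $C_{+}$ and label it $\wt\gamma^{+}_{1}$, so the neighboring boundary geodesics $\{\wt\gamma^{+}_{i}\}_{i\in\Z}$ share endpoints $a_{i}\in\Si$ as in (2a). The central step is to show $\wt f_{*}^{p}$ fixes every $\wt\gamma^{+}_{i}$, not only $\wt\gamma^{+}_{1}$: since $\wt f_{*}^{p}$ fixes $\wt\gamma^{+}_{1}$ it preserves the lift of $C_{+}$, and since $f^{p}$ is orientation preserving on $L$ the induced action on the cyclically ordered set $\{\wt\gamma^{+}_{i}\}_{i\in\Z}$ is order preserving; any order-preserving permutation of a totally or cyclically ordered set fixing one element fixes them all, so $\wt f_{*}^{p}(\wt\gamma^{+}_{i}) = \wt\gamma^{+}_{i}$ for each $i$, and each $a_{i}$ is consequently a fixed point of $\wh f_{*}^{p}$.

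With every $\wt\gamma^{+}_{i}$ fixed by $\wt f_{*}^{p}$, I apply Lemma~\ref{perleaf} to $\wt\gamma^{+}_{i}$ on its non-crown-side interval $(a_{i-1}, a_{i})$. The hypothesis of the lemma, namely that $\wt\gamma^{+}_{i}$ is approached by lifts of leaves of $\Lambda^{\sigma}_{+}$ with endpoints in $(a_{i-1}, a_{i})$, holds because $\Lambda^{\sigma}_{+}$ is minimal (so every leaf is a limit of others) while the crown-set side of $\wt\gamma^{+}_{i}$ contains no leaves of $\wt\Lambda^{\sigma}_{+}$ apart from the neighboring boundary geodesics $\wt\gamma^{+}_{i\pm1}$, which meet $\wt\gamma^{+}_{i}$ only at shared endpoints on $\Si$ and do not approach in the sense required. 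Lemma~\ref{perleaf}(\ref{partone}) then yields the contracting assertion of (2b), and Lemma~\ref{perleaf}(\ref{parttwo}) produces the unique expanding fixed point $b_{i}\in(a_{i-1}, a_{i})\cap\Si$ required by (2c). The main obstacle is justifying the fixing claim $\wt f_{*}^{p}(\wt\gamma^{+}_{i}) = \wt\gamma^{+}_{i}$ for all $i$: in the infinite-cycle cases (annulus-like, punctured-disk, and M\"obius-strip crown sets) one must rule out a nontrivial shift of the bi-infinite sequence of boundary geodesics, which is precisely what combining orientation-preservation of $f^{p}$ with $\wt f_{*}^{p}(\wt\gamma^{+}_{1}) = \wt\gamma^{+}_{1}$ accomplishes.
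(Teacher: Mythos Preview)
Your proof is correct and follows essentially the same route as the paper: split on whether $\wt\lambda$ is two-sided in $\wt\Lambda^{\sigma}_{+}$ and then invoke Lemma~\ref{perleaf} on each relevant arc, in case~(\ref{prttwo}) passing to the adjacent crown set and applying the lemma to every $\wt\gamma^{+}_{i}$. You actually supply more justification than the paper does for the key step that $\wt f_{*}^{p}$ fixes each $\wt\gamma^{+}_{i}$ (the paper simply asserts it); one small caution is that your blanket claim ``any order-preserving permutation of a totally or cyclically ordered set fixing one element fixes them all'' is false in general---it holds here only because the index set is $\Z$ or $\Z/n\Z$, where order-preserving bijections are shifts.
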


\begin{proof}

If $\wt\lambda$ is approached both above and below (in Figure~\ref{b1b2}~(right)) by lifts of leaves of $\Lambda^{\sigma}_{+}$, then (\ref{prtone}) follows by applying Lemma~\ref{perleaf}~(\ref{partone}) to each of the intervals $[a_{0},a_{1}]\ss\Se$.

Otherwise, there exists a component $V$ of $L\sm|\Lambda^{\sigma}_{+}|$ such that $\lambda\in\delta V$. Thus, there exists a crown set $C_{+}$ and cycle of geodesics $\{\gamma^{+}_{i}\}_{i\in\Z}\ss\delta C_{+}$ such that  $\lambda = \gamma^{+}_{1}$ belongs to the cycle. The lift $\wt\lambda$ of $\lambda$ fixed by $\wt f_{*}^{p}$ determines lifts $\wt\gamma^{+}_{i}$ of $\gamma^{+}_{i}$, $i\in\Z$, all fixed by $\wt f_{*}^{p}$. Then (\ref{prttwo})  follows from applying Lemma~\ref{perleaf}~(\ref{parttwo}) to each of the geodesics $\wt\gamma^{+}_{i}$, $i\in\Z$.
\end{proof}

\begin{rem}

We will see that the pairs of points  $b_{i-1},b_{i}\in\Si$, $i\in\Z$, are each endpoints of leaves of $\wt\Lambda^{\sigma}_{-}$

\end{rem}

\begin{example}

Proposition~\ref{bothsides}~(\ref{prttwo}) is illustrated in Figure~\ref{CSleft} if the crown set $C_{+}$ is simply connected, in Figure~\ref{CScusp} if the crown set is a cusp, and Figure~\ref{CSright} if the crown set is an annulus or a M\"obius strip.

\end{example}

\begin{cor}\label{trtau}

If $\tau$ is a simple closed geodesic transverse to $\Lambda^{\sigma}_{+}$, then $\tau_{n} = f_{*}^{n}(\tau)$, $n\in\Z$, are distinct.

\end{cor}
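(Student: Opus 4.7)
The plan is to argue by contradiction, in direct parallel with Lemma~\ref{disjlams}. Suppose $f_{*}^{n}(\tau)=\tau$ for some least integer $n>0$ and set $N=\#(\sigma\cap\tau)$. Since $f_{*}$ is a bijection on simple closed geodesics preserving transverse intersection numbers, $\#(\sigma_{jn}\cap\tau) = \#(\sigma_{jn}\cap\tau_{jn}) = \#(\sigma\cap\tau) = N$ for every $j\ge0$. The goal is to manufacture some index $m=jn$ for which $\#(\sigma_{m}\cap\tau)\ge N+1$, the desired contradiction.

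First I would fix a single leaf $\lambda\in\Lambda^{\sigma}_{+}$. Since $\Lambda^{\sigma}_{+}$ is transversely a Cantor set and $\tau$ is transverse to $\Lambda^{\sigma}_{+}$, the set $|\Lambda^{\sigma}_{+}|\cap\tau$ is locally a Cantor set in $\tau$, and in particular infinite. By minimality, $|\Lambda^{\sigma}_{+}|=\ol{\lambda}$, so $\lambda$ is dense in $|\Lambda^{\sigma}_{+}|$; combined with the local product structure of $\Lambda^{\sigma}_{+}$ at each point of $|\Lambda^{\sigma}_{+}|\cap\tau$, where the nearby leaves are uniformly transverse to $\tau$, this forces $\lambda$ itself to cross $\tau$ arbitrarily near every point of $|\Lambda^{\sigma}_{+}|\cap\tau$. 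Consequently $\#(\lambda\cap\tau)=\infty$, and I can select $N+1$ distinct points $y_{1},\ldots,y_{N+1}\in\lambda\cap\tau$ together with pairwise disjoint open neighborhoods $V_{1},\ldots,V_{N+1}\ss L$.

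Next I would apply Definition~\ref{defnLL} with $p=n$ to $\lambda\in\LL^{\sigma}_{+}$: fixing a lift $\wt\lambda$ of $\lambda$, there exist a subsequence $\{\sigma_{n_{k}n}\}_{k\ge0}$ and lifts $\wt\sigma_{n_{k}n}$ with $\wt\sigma_{n_{k}n}\to\wt\lambda$ in $\Delta$. By Proposition~\ref{homeoR} the covering $\wt\lambda\to\lambda$ is a homeomorphism, so each $y_{j}$ determines a unique point $\wt y_{j}\in\wt\lambda$; the lift of $\tau$ through $\wt y_{j}$ crosses $\wt\lambda$ transversely there. Uniform convergence of $\wt\sigma_{n_{k}n}$ to $\wt\lambda$ on compact sets then forces $\wt\sigma_{n_{k}n}$ to meet this lift of $\tau$ near $\wt y_{j}$ for all $k$ sufficiently large, and projecting yields a point of $\sigma_{n_{k}n}\cap\tau\cap V_{j}$ for $k\ge K_{j}$. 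Taking $K=\max_{j}K_{j}$ gives $\#(\sigma_{n_{K}n}\cap\tau)\ge N+1$, contradicting $\#(\sigma_{n_{K}n}\cap\tau)=N$.

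The hard part will be the second paragraph: establishing that the single fixed leaf $\lambda$ meets $\tau$ in infinitely many points. This requires combining the transverse Cantor structure of $\Lambda^{\sigma}_{+}$, the density $\ol{\lambda}=|\Lambda^{\sigma}_{+}|$, and the uniform transversality of nearby leaves to $\tau$ in a small neighborhood. Once that is in hand, the passage from $\wt\sigma_{n_{k}n}\to\wt\lambda$ to many points of $\sigma_{n_{k}n}\cap\tau$ is the argument already appearing in Lemma~\ref{disjlams} and introduces no new ideas.
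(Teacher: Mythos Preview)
Your argument is correct, but it is genuinely different from the paper's. The paper exploits the dynamics on $\Se$ established in Proposition~\ref{bothsides}: choosing a lift $\wt\tau$ meeting $\wt\lambda$ near $a_{0}$, one observes that if $f_{*}^{k}(\tau)=\tau$ then each $\wt f_{*}^{-nkp}(\wt\tau)$ is again a lift of $\tau$, and since $a_{0},a_{1}$ are attracting and $b_{0},b_{1}$ repelling for $\wh f_{*}^{p}$, these lifts converge to the geodesic with endpoints $b_{0},b_{1}$. This contradicts the local finiteness of the set of lifts of a fixed closed geodesic.

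Your route avoids Proposition~\ref{bothsides} entirely and instead recycles the intersection--count mechanism of Lemma~\ref{disjlams}: periodicity forces $\#(\sigma_{jn}\cap\tau)=N$, while minimality of $\Lambda^{\sigma}_{+}$ gives $\#(\lambda\cap\tau)=\infty$, and Definition~\ref{defnLL} with $p=n$ supplies a subsequence along multiples of $n$ whose lifts converge to $\wt\lambda$, producing $N+1$ crossings. This is a legitimate and slightly more elementary proof, in that it uses only minimality and the convergence built into Definition~\ref{defnLL}; in particular it shows the corollary could be placed earlier, before the fixed--point analysis. The paper's argument, on the other hand, is shorter once Proposition~\ref{bothsides} is in hand, and it simultaneously identifies the limiting geodesic with a leaf of $\Lambda^{\sigma}_{-}$, foreshadowing the next subsection.
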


\begin{proof}

Let $\wt\tau$ be a lift of $\tau$ meeting $\wt\lambda$ at a point close to $a_{0}\in\Si$. If $f_{*}^{k}(\tau) = \tau$, then $\wt f_{*}^{-nkp}(\wt\tau)$, $n\ge 0$, is a sequence of lifts of $\tau$ and as such should not accumulate in $\Delta$. One can see from Figure~\ref{b1b2} that  this sequence  limits on the geodesic with endpoints $b_{0},b_{1}\in\Si$. The corollary follows from this contradiction.
\end{proof} 

\begin{cor}

If $\tau$ is a simple closed geodesic  meeting $|R^{\sigma}|$, then $\tau_{n} = f_{*}^{n}(\tau)$, $n\in\Z$, are distinct.

\end{cor}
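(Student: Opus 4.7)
The plan is to reduce the claim to Corollary~\ref{trtau} by showing that every such $\tau$ is in fact transverse to $\Lambda^{\sigma}_{+}$. The hypothesis ``$\tau$ meets $|R^{\sigma}|$'' must be read in the sense that $\tau$ is not itself an element of the finite set $R^{\sigma}$, since by Lemma~\ref{minr} the map $f_{*}^{r}$ permutes $R^{\sigma}$ and so every rim and M\"obius center has a finite $f_{*}$-orbit. Accordingly, fix $\rho \in R^{\sigma}$ with $\tau \cap \rho \ne \0$; as $\tau \ne \rho$ are distinct simple closed geodesics, they cross transversely at finitely many points. Pick one such crossing, let $C_{+}$ denote the crown or M\"obius crown set associated to $\rho$, and let $\alpha$ be the component of $\tau \cap C_{+}$ containing that crossing.

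Lift $\alpha$ to a geodesic arc $\wt\alpha$ in $\Delta$ sitting inside a lift of $C_{+}$, namely the convex hull of a bi-infinite cycle of leaf lifts $\{\wt\gamma^{+}_{i}\}_{i\in\Z}$ of $\delta C_{+}$, bounded in the crown case by one specific lift $\wt\rho$ of the rim, or, in the M\"obius case, having $\wt\rho$ run through the interior of the lift. One endpoint of $\wt\alpha$ meets $\wt\rho$ by construction. In the crown case, if the other endpoint also lay on $\wt\rho$, then the geodesic line $\wt\tau$ containing $\wt\alpha$ would meet $\wt\rho$ at two distinct points; since any two distinct geodesics in $\Delta$ meet in at most one point, this would force $\wt\tau = \wt\rho$ and hence $\tau = \rho$, contradicting our choice. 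In the M\"obius case the same conclusion comes for free, since $\wt\tau$ already meets $\wt\rho$ at the initial endpoint of $\wt\alpha$ and cannot meet it again. Either way the second endpoint of $\wt\alpha$ lies on some $\wt\gamma^{+}_{i}$, so $\tau$ crosses the leaf $\gamma^{+}_{i}$ of $\Lambda^{\sigma}_{+}$.

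By Proposition~\ref{homeoR} the leaf $\gamma^{+}_{i}$ is homeomorphic to $\R$, hence non-closed, while $\tau$ is closed; two distinct geodesics can only meet transversely, so $\tau$ is transverse to $\Lambda^{\sigma}_{+}$, and Corollary~\ref{trtau} at once gives the $\tau_{n}$ distinct. The main obstacle I expect is the lift-counting step, which uses crucially that $\alpha$ is a single connected component of $\tau \cap C_{+}$ and therefore lifts to a single geodesic segment sitting entirely inside one convex lift of $C_{+}$, in which the only ``rim-like'' boundary is the single geodesic $\wt\rho$; once that setup is in place, the pigeonhole observation that $\wt\tau$ cannot meet $\wt\rho$ twice does all the real work.
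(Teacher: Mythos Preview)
Your approach is exactly the paper's: show that any simple closed geodesic $\tau$ meeting $|R^\sigma|$ must be transverse to $\Lambda^\sigma_+$, and then invoke Corollary~\ref{trtau}. The paper's own proof is the single sentence ``If $\tau$ meets $|R^{\sigma}|$, then $\tau$ is transverse to $\Lambda^{\sigma}_{+}$ and the corollary follows from the previous corollary,'' so you are supplying the details the paper omits, and your observation that the statement tacitly excludes $\tau\in R^\sigma$ is well taken.

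Your crown argument is fine, but the M\"obius case is muddled. When $C_+$ is a M\"obius crown set, the geodesic $\rho$ is the \emph{core}, not a boundary component; the frontier of $C_+$ consists solely of the leaves $\gamma^+_i$. Hence the crossing point of $\tau$ with $\rho$ lies in the \emph{interior} of $C_+$, so it is not an endpoint of the component $\alpha$ of $\tau\cap C_+$, and your claim ``one endpoint of $\wt\alpha$ meets $\wt\rho$ by construction'' fails here. What you actually need is a dichotomy: either $\alpha$ is a proper arc in $C_+$, in which case both of its endpoints already lie on $\bigcup_i\gamma^+_i$ and transversality to $\Lambda^\sigma_+$ is immediate; or $\alpha=\tau$, i.e.\ $\tau\subset C_+$, in which case $\tau$ is a simple closed geodesic contained in an open M\"obius strip and is therefore freely homotopic in $L$ to some power $\rho^k$, forcing $|k|=1$ and $\tau=\rho$ by uniqueness of geodesic representatives, contrary to hypothesis. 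Either branch gives what you want; your current text does not cover the second.
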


\begin{proof}

If $\tau$ meets $|R^{\sigma}|$, then $\tau$ is transverse to $\Lambda^{\sigma}_{+}$ and the corollary follows from the previous corollary.
\end{proof}

The previous two corollaries have as immediate consequence.

\begin{cor}\label{corcomp}\hfill

\begin{enumerate}

\item The elements of $R^{\sigma}$ not components of $\bd L$ are contained in $\Gamma$\upn{;}

\item If $U^{\sigma}$ is the component  of $L\sm|R^{\sigma}|$ containing $\Lambda^{\sigma}_{+}$, then $U^{\sigma} = U$\upn{;}

\item $U\sm\Lambda^{\sigma}_{+}$ is a finite union of sets whose closures are crown sets\upn{;}\label{itemcompl}

\item If $\tau\ss U\sm\bd L$ is a simple closed  geodesic, then   $\tau_{n} = f_{*}^{n}(\tau)$, $n\in\Z$, are distinct.

\end{enumerate}

\end{cor}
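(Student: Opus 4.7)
The plan is to derive items (1)--(4) in order; item (1) does most of the work, while (2)--(4) are quick consequences of (1) combined with the two preceding corollaries. For (1), every element of $R^{\sigma}$ is a simple closed geodesic. Because $f_{*}^{r}(\Lambda^{\sigma}_{+}) = \Lambda^{\sigma}_{+}$ by Lemma~\ref{minr}, $f_{*}^{r}$ permutes the finitely many crown sets of $\Lambda^{\sigma}_{+}$ and hence the finite set $R^{\sigma}$; some iterate $f_{*}^{rm}$ therefore fixes every element of $R^{\sigma}$, placing $R^{\sigma}\subset\Gamma'$. To upgrade to $\Gamma$, I must verify isolation: if $\rho\in R^{\sigma}\sm\bd L$ and some $\gamma\in\Gamma'\sm\{\rho\}$ meets $\rho$, then $\gamma$ is a simple closed geodesic meeting $|R^{\sigma}|$, so by the second preceding corollary its iterates $\gamma_{n}$ are all distinct, contradicting $\gamma\in\Gamma'$. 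Hence $\rho\in\Gamma$.

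For (2) and (3), (1) gives $|R^{\sigma}|\sm\bd L\subset|\Gamma|$, so $\sigma\subset U\sm\bd L$ is disjoint from $|R^{\sigma}|$. Since $\sigma$ meets $\Lambda^{\sigma}_{+}$ transversely (as noted at the start of the proof of Lemma~\ref{perleaf}), $\sigma$ and $\Lambda^{\sigma}_{+}$ lie in the same component of $L\sm|R^{\sigma}|$, namely $U^{\sigma}$, giving $U\subset U^{\sigma}$. Conversely, if $\gamma\in\Gamma$ meets $U^{\sigma}$, Corollary~\ref{trtau} rules out $\gamma$ crossing $\Lambda^{\sigma}_{+}$ transversely and Proposition~\ref{homeoR} rules out $\gamma$ being a leaf, so $\gamma$ must lie entirely inside some crown set $V$ of $\Lambda^{\sigma}_{+}$; the classification of crown-set topologies then forces $\gamma$ to be the rim or center geodesic, hence in $R^{\sigma}$, contradicting $\gamma\subset U^{\sigma}=L\sm|R^{\sigma}|$. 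Thus $U^{\sigma}\subset U$, proving (2). Item (3) is immediate: the finitely many components of $L\sm|\Lambda^{\sigma}_{+}|$ are crown sets, all of which sit in $U=U^{\sigma}$, and their closures are crown sets by construction.

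For (4), let $\tau\subset U\sm\bd L$ be a simple closed geodesic. By (3), either $\tau$ meets $|\Lambda^{\sigma}_{+}|$ or $\tau$ lies in a single crown set. In the first case $\tau$ cannot be a leaf (Proposition~\ref{homeoR} says leaves are homeomorphic to $\R$), so the intersection is transverse and Corollary~\ref{trtau} gives that the iterates $\tau_{n}$, $n\in\Z$, are distinct. In the second case the same crown-set classification forces $\tau$ to be the rim or center circle, placing $\tau\in R^{\sigma}\subset\Gamma$, which contradicts $\tau\subset U=L\sm|\Gamma|$.

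The main obstacle is the crown-set case analysis used in (2) and (4): one must invoke the list of topological types of a crown set (disk, punctured disk, annulus, or M\"obius band, each with finitely many boundary cusps) to conclude that any simple closed geodesic contained in such a set is necessarily its rim or center circle, hence an element of $R^{\sigma}$. Everything else is a direct reassembly of Lemma~\ref{minr} and the two corollaries immediately preceding the statement.
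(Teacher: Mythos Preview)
Your argument is correct and matches the paper's approach: the paper gives no explicit proof, declaring the corollary an immediate consequence of the two preceding corollaries, and you have accurately unpacked that dependence using Lemma~\ref{minr}, Corollary~\ref{trtau}, and the crown-set classification. One minor imprecision: in part~(3) you say ``the finitely many components of $L\sm|\Lambda^{\sigma}_{+}|$ are crown sets,'' but crown sets are by definition convex-hull pieces inside such components, and only after (2) gives $U=U^{\sigma}$ do the components of $U\sm|\Lambda^{\sigma}_{+}|$ have closures that are crown sets---which is what you actually need and use.
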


\subsection{The negative  lamination $\Lambda^{\sigma}_{-}$}\label{neglams}

 We define the negative lamination $\Lambda^{\sigma}_{-}$ and develop its properties exactly as in Sections~\ref{poslam} and~\ref{crsets}.  All the results of Sections~\ref{poslam} and~\ref{crsets}  hold for   $\Lambda^{\sigma}_{-}$   with the obvious modifications. 

Proposition~\ref{neglam} below relates $\Lambda^{\sigma}_{-}$ and $\Lambda^{\sigma}_{+}$. We need two preliminary lemmas.

\begin{lemma}\label{pmtrans}

If $\lambda_{-}\in\Lambda^{\sigma}_{-}$ and $\lambda_{+}\in\Lambda^{\sigma}_{+}$ are  semi-isolated leaves, then $\lambda_{-}\cap\lambda_{+}$ is nonempty and transverse.

\end{lemma}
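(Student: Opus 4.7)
The plan is to use the fixed-point dynamics of $\wh f_*^p$ on $\Si$ given by Proposition~\ref{bothsides}(\ref{prttwo}). Since $\lambda_+$ and $\lambda_-$ are semi-isolated, that proposition and its $-$ analog identify each as a boundary leaf of a crown set, hence periodic under $f_*$. I would pick a common positive period $p$ so that $f_*^p$ preserves the orientations of both leaves and $f^p$ is orientation-preserving on $L$, then fix a lift $\wt\lambda_+$ of $\lambda_+$ together with the lift $\wt f_*^p$ of $f_*^p$ satisfying $\wt f_*^p(\wt\lambda_+)=\wt\lambda_+$. By Proposition~\ref{bothsides}(\ref{prttwo}), the endpoints $a_0,a_1\in\Si$ of $\wt\lambda_+$ are contracting fixed points of $\wh f_*^p$ and there exist expanding fixed points $b_0\in(a_1,a_0)\cap\Si$, $b_1\in(a_0,a_1)\cap\Si$.

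Next I would produce a leaf of $\LL^\sigma_-$ whose lift crosses $\wt\lambda_+$ transversely. Since $\sigma\ss U=U^\sigma$ by Corollary~\ref{corcomp}(2) and $\sigma$ is transverse to $\Lambda^\sigma_+$, density of $\lambda_+$ in $|\Lambda^\sigma_+|$ together with the openness of transverse intersection gives $\sigma\cap\lambda_+\ne\0$, so there is a lift $\wt\sigma$ meeting $\wt\lambda_+$, with endpoints $A_0\in(a_0,a_1)$ and $B_0\in(a_1,a_0)$. Under $\wh f_*^{-p}$ the contracting fixed points in the arcs $(a_0,a_1)$ and $(a_1,a_0)$ are $b_1$ and $b_0$, so $\wh f_*^{-np}(A_0)\to b_1$ and $\wh f_*^{-np}(B_0)\to b_0$. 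Hence $\wt f_*^{-np}(\wt\sigma)\to\wt\mu$, the geodesic with endpoints $b_0,b_1$. Since each $\wt f_*^{-np}(\wt\sigma)$ is a lift of $\sigma_{-np}$, the projection $\mu\in\LL^\sigma_-$, and $\wt\mu$ crosses $\wt\lambda_+$ transversely because $\{b_0,b_1\}$ separates $\{a_0,a_1\}$ on $\Si$.

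Finally, the crossing must be passed from $\mu$ to the specific $\lambda_-$. By Lemma~\ref{convto}, $\ol\mu$ is a sublamination of $\LL^\sigma_-$ containing a minimal sublamination, and a Lemma~\ref{disjlams}-type non-crossing argument for minimal sublaminations of $\LL^\sigma_-$, combined with the incompatible fixed-point dynamics of $\Lambda^\sigma_+$ and $\Lambda^\sigma_-$ (attracting versus expanding endpoints of $\wh f_*^p$), forces $\Lambda^\sigma_-\ss\ol\mu$. Then lifts of $\lambda_-$ accumulate on $\wt\mu$ in $\Delta$, and any sufficiently close lift still has endpoints separating $\{a_0,a_1\}$, so it crosses $\wt\lambda_+$ transversely; projecting to $L$ yields $\lambda_-\cap\lambda_+\ne\0$ and transverse.

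The hard part will be the inclusion $\Lambda^\sigma_-\ss\ol\mu$, i.e., showing that the fixed minimal sublamination $\Lambda^\sigma_-\ss\LL^\sigma_-$ is contained in the closure of the dynamically constructed $\mu$. This is essentially the uniqueness of minimal sublaminations of $\LL^\sigma_-$ that Proposition~\ref{lamuniq} will later provide; in the flow of this section one likely proves these two statements in tandem, or else arranges the initial choice of $\Lambda^\sigma_-$ to lie inside $\ol\mu$.
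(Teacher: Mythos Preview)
Your approach takes a long detour and leaves a real circularity, while the paper's argument is almost a one-liner on each point.

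For nonemptiness, the paper does not construct an auxiliary leaf $\mu$ at all. It simply invokes Corollary~\ref{corcomp}(\ref{itemcompl}): the components of $U\sm|\Lambda^{\sigma}_{+}|$ are open disks, open or half-open annuli, or open M\"obius strips. A complete simple geodesic $\lambda_{-}$, which by Proposition~\ref{homeoR} is non-closed, cannot live entirely inside such a piece, so $\lambda_{-}$ meets $|\Lambda^{\sigma}_{+}|$. Since $\Lambda^{\sigma}_{+}$ is minimal, $\lambda_{+}$ is dense in it, and a nearby lift of $\lambda_{+}$ still separates the endpoints of $\wt\lambda_{-}$; hence $\lambda_{-}\cap\lambda_{+}\ne\emptyset$.

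For transversality, you already isolated the key dynamical fact but then buried it inside your uniqueness argument. The paper uses it \emph{directly}: suppose $\lambda_{-}=\lambda_{+}=\lambda$, pick $p>0$ and lifts with $\wt f_{*}^{p}(\wt\lambda)=\wt\lambda$ preserving all orientations. Proposition~\ref{bothsides}(\ref{prttwo}) applied to $\Lambda^{\sigma}_{+}$ makes the endpoints $a_{0},a_{1}$ of $\wt\lambda$ contracting for $\wh f_{*}^{p}$, while the same proposition applied to $\Lambda^{\sigma}_{-}$ (i.e., with $f$ replaced by $f^{-1}$) makes them expanding. That is the whole proof of $\lambda_{-}\ne\lambda_{+}$.

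Your route through the geodesic $\mu$ with endpoints $b_{0},b_{1}$ is exactly what the paper does \emph{after} this lemma, in Proposition~\ref{neglam}, to pin down $\Lambda^{\sigma}_{-}$ concretely. Trying to import it here forces you to need $\Lambda^{\sigma}_{-}\ss\ol\mu$, which is the content of Proposition~\ref{lamuniq}; but Proposition~\ref{lamuniq} is proved later using the leaves $\gamma^{-}_{i}$ supplied by Proposition~\ref{neglam}. So the ``hard part'' you flag is not just hard, it is circular in the paper's ordering, and your fallback of re-choosing $\Lambda^{\sigma}_{-}$ inside $\ol\mu$ changes the statement rather than proves it.
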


\begin{proof}

Since the components of $U\sm\Lambda_{+}$  are open disks, open or half-open annuli, or open M\"obius strips, it follows that $\lambda_{-}\cap\lambda_{+}$ is nonempty. It remains to show that $\lambda_{-}\ne\lambda_{+}$. Suppose $\lambda_{-} = \lambda_{+} = \lambda$ and choose the integer $p>0$ and lifts $\wt\lambda$ of $\lambda$ and $\wt f_{*}^{p}$ of $f_{*}^{p}$  so that $\wt f_{*}^{ p}(\wt\lambda) = \wt\lambda$ and $\wt f_{*}^{p}$ preserves the orientation of $\wt L$ and $\wt\lambda$. Refer to Figure~\ref{TwoPics}~(right). Then Proposition~\ref{bothsides}~(\ref{prttwo}) applied to $\Lambda^{\sigma}_{+}$ implies that the endpoints $a_{0},a_{1}\in\Si$ of $\wt\lambda$ are contracting fixed points of $\wt f_{*}^{p}$ while Proposition~\ref{bothsides}~(\ref{prttwo}) applied to $\Lambda^{\sigma}_{-}$ implies that the endpoints $a_{0},a_{1}\in\Si$ of $\wt\lambda$ are expanding fixed points of $\wt f_{*}^{p}$. This contradiciton implies $\lambda_{-}\ne\lambda_{+}$. 
\end{proof}

\begin{lemma}\label{bothends}

If  $\lambda\in\Lambda^{\sigma}_{\pm}$  and $\gamma$ is a geodesic meeting $\lambda$ transversely, then the points $\lambda\cap\gamma$ accumulate at   both ends of $\lambda$.

\end{lemma}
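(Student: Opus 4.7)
I argue for $\Lambda^\sigma_+$; the case of $\Lambda^\sigma_-$ is identical. By Proposition~\ref{homeoR}, parameterize $\lambda$ homeomorphically by $\R$ with the transverse intersection $x\in\lambda\cap\gamma$ at the origin, and let $\lambda^+,\lambda^-$ denote the resulting positive and negative rays. The lemma asserts that the parameter values at which $\lambda$ meets $\gamma$ are unbounded above and below.

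\emph{Reduction to recurrence.} Choose a disk neighborhood $B$ of $x$ so small that $\gamma\cap B$ is a single embedded arc $\gamma'$, and so that every leaf of $\Lambda^\sigma_+$ meeting $B$ does so in a single plaque transverse to $\gamma'$. This is possible because $\gamma$ meets $\lambda$ transversely at $x$ and transversality of the continuous tangent-line fields is an open condition, uniform on the compact $\overline B$. Then every entry of $\lambda^+$ into $B$ produces exactly one transverse intersection with $\gamma'\subseteq\gamma$. It therefore suffices to show that $\lambda^+$, and symmetrically $\lambda^-$, returns to $B$ for arbitrarily large parameter values; equivalently, each half-ray is dense in $|\Lambda^\sigma_+|$, which contains $x$.

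\emph{Density of half-leaves.} Let $Z=\overline{\lambda^+}\subseteq|\Lambda^\sigma_+|$. A standard sliding argument using local product charts of $\Lambda^\sigma_+$ shows: if $Z$ contains a point $y$ on a leaf $\mu$, then $Z$ contains the plaque of $\mu$ through $y$, and by propagating through a chain of product charts covering $\mu$, all of $\mu$. Hence $Z\cap(|\Lambda^\sigma_+|\setminus\lambda)$ is either empty or a nonempty closed union of leaves; minimality of $\Lambda^\sigma_+$ then forces $Z=|\Lambda^\sigma_+|$ in the latter case. If instead $Z\subseteq\lambda$, then since $|\Lambda^\sigma_+|$ is compact (its leaves are locally limits of the $\sigma_n$, each of which avoids a uniform cusp neighborhood by Proposition~\ref{cuspnhb}) while $\lambda^+$ is non-compact, $Z$ strictly contains $\lambda^+$ and so meets $\lambda^-$; the same sliding argument along $\lambda$ gives $\lambda\subseteq Z$, and minimality forces $Z=\overline\lambda=|\Lambda^\sigma_+|$. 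Either way $\lambda^+$ is dense, and the lemma follows.

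\emph{Main obstacle.} The delicate point is the density of each half-ray: this is the classical recurrence property of minimal geodesic laminations with transverse Cantor structure, but its verification must simultaneously invoke the local product structure and the compactness of the support $|\Lambda^\sigma_+|$, the latter itself depending on cusp-avoidance of the approximating closed geodesics $\sigma_n$.
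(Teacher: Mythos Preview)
Your argument follows the same route as the paper---minimality of $\Lambda^\sigma_\pm$---but where the paper's proof is a single line (``follows immediately since $\Lambda^\sigma_\pm$ are minimal laminations''), you have spelled out the underlying recurrence argument in full.

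One point deserves sharpening. You assert that ``$\lambda^+$ returns to $B$ for arbitrarily large parameter values'' is \emph{equivalent} to density of $\lambda^+$ in $|\Lambda^\sigma_+|$. It is not: since $x=\lambda(0)\in\lambda^+\cap B$ already, density of $\lambda^+$ says nothing about returns at large parameter. What the reduction actually needs is that the $\omega$-limit set $\omega(\lambda^+)=\bigcap_{T>0}\overline{\lambda([T,\infty))}$ equals $|\Lambda^\sigma_+|$. Your sliding argument proves exactly this once you set $Z=\omega(\lambda^+)$ instead of $\overline{\lambda^+}$: any $y\in\omega(\lambda^+)$ is approached by $\lambda(s_n)$ with $s_n\to\infty$, so the approximating plaques $\lambda([s_n-a_n,s_n+b_n])$ eventually lie in $\lambda([T,\infty))$ for every fixed $T$, and the limit plaque through $y$ lies in $\omega(\lambda^+)$. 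This modification also dissolves your awkward case $Z\subseteq\lambda$: the set $\omega(\lambda^+)$ is nonempty (by the compactness of $|\Lambda^\sigma_+|$ you noted), closed, and leaf-saturated, hence equals $|\Lambda^\sigma_+|$ by minimality in one stroke.
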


\begin{proof}

The lemma follows immediately since  $\Lambda^{\sigma}_{\pm}$ are minimal laminations.
\end{proof}

\begin{prop}\label{neglam}

\begin{enumerate}

\item In \emph{Proposition~\ref{bothsides}~(\ref{prtone})} applied to $\wt\Lambda^{\sigma}_{+}$, there exists a minimal lamination $\Lambda\ss\LL^{\sigma}_{-}$ such that the geodesic $\wt\gamma$ with endpoints $b_{0},b_{1}\in\Si$, is the lift of  a   leaf  $\gamma\in\Lambda$ that is not semi-isolated in $\Lambda$\upn{;}\label{itemone}

\item In \emph{Proposition~\ref{bothsides}~(\ref{prttwo})} applied to $\wt\Lambda^{\sigma}_{+}$, there exists a minimal lamination $\Lambda\ss\LL^{\sigma}_{-}$ such that  the geodesics $\wt\gamma^{-}_{i}$ with endpoints $b_{i-1},b_{i}\in\Si$, are the lifts of  semi-isolated leaves   $\gamma^{-}_{i}\in\Lambda$ $i\in\Z$.\label{itemtwo}

\end{enumerate}

\end{prop}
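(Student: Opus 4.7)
The strategy rests on a dynamical duality: by Lemma~\ref{perleaf}~(\ref{parttwo}), each $b_j$ is an expanding fixed point of $\wh f_*^p$ on $\Se$ and hence an attracting fixed point of $\wh f_*^{-p}$, unique in its interval $(a_{j-1}, a_j)$. Iterating $\wh f_*^{-p}$ on a transverse lift of $\sigma$ therefore pulls its endpoints onto consecutive $b_j$'s, producing the very geodesics claimed to be lifts of leaves of the negative minimal lamination.

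First I would produce $\gamma$ (resp.\ each $\gamma^-_i$) as a limit of iterates under $\wh f_*^{-np}$. Since $\sigma\ss U$ is a closed geodesic, it is transverse to every leaf of $\Lambda^\sigma_+$ (as noted at the start of the proof of Lemma~\ref{perleaf}), so a lift $\wt\sigma$ crossing $\wt\lambda$ (case (\ref{itemone})) or $\wt\gamma^+_i$ (case (\ref{itemtwo})) exists. In (\ref{itemone}) the two endpoints of $\wt\sigma$ lie in the arcs $(a_0,a_1)$ and $(a_1,a_0)$ of $\Se$; Lemma~\ref{perleaf}~(\ref{parttwo}) read for $\wh f_*^{-p}$ shows they are attracted to $b_1$ and $b_0$ respectively, so $\wh f_*^{-np}(\wt\sigma)\to\wt\gamma$. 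In (\ref{itemtwo}) I would choose the intersection point of $\sigma$ with $\gamma^+_i$ so that the corresponding arc of $\sigma$ inside the crown set $C_+$ connects $\gamma^+_{i-1}$ to $\gamma^+_i$; then the resulting lift $\wt\sigma$ has endpoints in the adjacent arcs $(a_{i-2},a_{i-1})$ and $(a_{i-1},a_i)$, so $\wh f_*^{-np}(\wt\sigma)\to\wt\gamma^-_i$. Convergence propagates to arbitrary lifts by the deck-transformation trick of Lemma~\ref{convto}, and for any prescribed $p'>0$ the sub-subsequence $\{\sigma_{-npp'}\}_{n\ge1}$ has indices that are multiples of $p'$, verifying membership in $\LL^\sigma_-$.

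Next I would extract a minimal lamination containing $\gamma$ (resp.\ $\gamma^-_i$). By the analog of Lemma~\ref{convto} for $\LL^\sigma_-$, the collection $\Lambda_\gamma$ of geodesics in $\ol\gamma$ is a lamination contained in $\LL^\sigma_-$. The periodicity $f_*^p(\gamma)=\gamma$ (immediate from $\wt f_*^p(\wt\gamma)=\wt\gamma$), combined with the dynamics of $\wh f_*^p$ on $\wt\gamma$---the endpoints are repellers and there is at least one interior attracting fixed point---forces $\gamma$ to be recurrent and dense in $\ol\gamma$, so $\Lambda_\gamma$ is itself minimal. Set $\Lambda:=\Lambda_\gamma$; the analogous construction handles each $\gamma^-_i$.

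Finally, the side behavior is forced by the mirror-image structure of $\Lambda^\sigma_\pm$. In case (\ref{itemone}), $\wt\lambda$ is two-sided approached by lifts of $\wt\Lambda^\sigma_+$; perturbing $\wt\sigma$ so that its endpoints lie just on either side of $b_0,b_1$ gives lifts of $\sigma_{-np}$ converging to $\wt\gamma$ from either side, and by minimality these project to $\Lambda$-leaves approaching $\gamma$ from both sides, so $\gamma$ is not semi-isolated. In case (\ref{itemtwo}), the $b_i$'s accumulate on $\Si$ to the same one or two points as the $a_i$'s, so the cycle $\{\wt\gamma^-_i\}_{i\in\Z}$ bounds a dual crown set on the side of each $\wt\gamma^-_i$ opposite to the positive-lamination leaves, making $\gamma^-_i$ approached by $\Lambda$-leaves on only one side. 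The technical heart---and the main obstacle---is the minimality assertion in the previous paragraph: proving that every geodesic in $\ol\gamma$ is dense in $\ol\gamma$. The tools are the periodicity of $\gamma$ and the interior-attracting/endpoint-repelling dynamics of $\wh f_*^p$ on $\wt\gamma$, but making this recurrence airtight is where the argument requires the most care.
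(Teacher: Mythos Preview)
Your construction of $\gamma$ (and of each $\gamma^-_i$) as a limit of $\wt f_*^{-np}(\wt\sigma)$, and your verification that these geodesics belong to $\LL^\sigma_-$, is correct and matches the paper's argument almost verbatim.

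The genuine gap is the minimality of $\Lambda=\Lambda_\gamma$. Your proposed mechanism---the dynamics of $\wt f_*^p$ \emph{restricted to the geodesic $\wt\gamma$}, with ``endpoints repellers and an interior attracting fixed point''---does not do the job. First, no interior attracting fixed point on $\wt\gamma$ has been produced: the repelling/attracting data from Lemma~\ref{perleaf} concern $\wh f_*^p$ acting on $\Se$, not on $\wt\gamma$, and nothing forces $\wt f_*^p|\wt\gamma$ to have any fixed point at all. Second, even if $\gamma$ were recurrent, that only says $\gamma\subset\ol\gamma$; minimality requires that \emph{every} leaf $\gamma'\in\Lambda$ satisfies $\ol{\gamma'}=|\Lambda|$, i.e.\ that $\gamma\in\ol{\gamma'}$ for every such $\gamma'$. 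Your argument never touches an arbitrary $\gamma'$.

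The paper closes this gap by a different route that you do not mention: it picks an \emph{arbitrary} $\gamma'\in\Lambda$, invokes Lemma~\ref{bothends} (applied to $\lambda\in\Lambda^\sigma_+$ and $\gamma'$) to find lifts $\wt\gamma'$ and $\wt\gamma''$ of $\gamma'$ crossing $\wt\lambda$ on each side of $\wt\gamma$, and then iterates these under $\wt f_*^{-kp}$. Since the endpoints of $\wt\gamma'$ lie in the two complementary arcs of $\Se$ determined by $a_0,a_1$, the contracting dynamics toward $b_0,b_1$ force $\wt f_*^{-kp}(\wt\gamma')\to\wt\gamma$ from the left and $\wt f_*^{-kp}(\wt\gamma'')\to\wt\gamma$ from the right. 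Because $\Lambda=\ol\gamma$ is $f_*^p$-invariant (as $f_*^p(\gamma)=\gamma$), these iterates are leaves of $\Lambda$; hence $\gamma$ is approached on both sides by leaves of $\Lambda$, giving non-semi-isolation, and $\gamma$ lies in the closure of the $f_*^p$-orbit of every leaf, which yields minimality. The key tool you are missing is Lemma~\ref{bothends}: it is what guarantees that an \emph{arbitrary} leaf of $\Lambda$ has lifts positioned so that backward iteration drives them onto $\wt\gamma$. Your ``perturbing $\wt\sigma$'' argument for non-semi-isolation produces only lifts of $\sigma_{-np}$, not of leaves of $\Lambda$, so it does not establish that $\gamma$ is two-sided approached \emph{within} $\Lambda$.
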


\begin{proof}

(\ref{itemone}) Refer to  Figure~\ref{TwoPics}~(left). Let $\lambda\in\Lambda^{\sigma}_{+}$ with lift $\wt\lambda\in\wt\Lambda^{\sigma}_{+}$ be the leaf of Proposition~\ref{bothsides}~(\ref{prtone}).  Denote by $\wt\gamma$ the geodesic in $\Delta$    having endpoints $b_{0},b_{1}\in\Si$.  

By Lemma~\ref{bothends}, lifts of $\sigma$ approach both ends of $\wt\lambda$ so there exists a lift $\wt\sigma$ of $\sigma$ meeting $\wt\lambda$ to the left of $\wt\gamma$ in Figure~\ref{TwoPics}~(left).  Thus, $\wt\sigma_{kp} = \wt f_{*}^{kp}(\wt\sigma)\to\wt\gamma$ from the left as $k\to-\infty$. It follows that $\wt\gamma$ is the lift of a simple geodesic $\gamma\ss L$.  Let $q>0$ be an integer and let $n_{k} = -kp$. Since $\{\wt\sigma_{n_{k}q}\}_{k\ge0}$ is a subsequence of $\{\wt\sigma_{kp}\}_{k\le0}$, it follows that $\wt\sigma_{n_{k}q}\to\wt\gamma$ as $k\to\infty$. By Definition~\ref{defnLL},  $\gamma\in\LL^{\sigma}_{-}$. 

Let $\Lambda$ be the lamination consisting of the geodesics in $\ol\gamma$. By Lemma~\ref{bothends}, if $\gamma'\in\Lambda$, then there exists a lift $\wt\gamma'$ of $\gamma'$ meeting $\wt\lambda$ to the left of $\wt\gamma$ in Figure~\ref{TwoPics}~(left). Thus, $\wt f_{*}^{kp}(\wt\gamma')\to\gamma$ from the left as $k\to-\infty$. Similarly, there exists a lift $\wt\gamma''$ of $\gamma'$ such that $\wt f_{*}^{kp}(\wt\gamma''_{-})\to\wt\gamma$ from the right as $k\to-\infty$. It follows that $\gamma$ is not semi-isolated in $\Lambda$ and that $\Lambda$ is a minimal lamination. By the previous paragraph $\Lambda\ss\LL^{\sigma}_{-}$  and (\ref{itemone}) is proven. 

(\ref{itemtwo}) is proven similarly. Refer to Figures~\ref{TwoPics}~(right), \ref{CSleft}, \ref{CScusp}, and~\ref{CSright}.
\end{proof}

We take $\Lambda$ of Proposition~\ref{neglam}~(\ref{itemtwo}) as $\Lambda^{\sigma}_{-}$.

\begin{cor}\label{trlams}

$\Lambda^{\sigma}_{+}$ and $\Lambda^{\sigma}_{-}$ are transverse.

\end{cor}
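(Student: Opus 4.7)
The plan is to rule out the possibility that a leaf of $\Lambda^{\sigma}_{+}$ coincides as a geodesic with a leaf of $\Lambda^{\sigma}_{-}$. Once coincidence is excluded, transversality of the two laminations is automatic, since two distinct geodesics in a hyperbolic surface are either disjoint or cross transversely.

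I would argue by contradiction. Suppose some $\lambda_{+}\in\Lambda^{\sigma}_{+}$ and $\lambda_{-}\in\Lambda^{\sigma}_{-}$ are equal to the same geodesic $\lambda$. The lemma in Section~\ref{poslam} that identifies $|\Lambda^{\sigma}_{\pm}|$ with $\overline{\mu}$ for any leaf $\mu$ of $\Lambda^{\sigma}_{\pm}$ then gives
$$|\Lambda^{\sigma}_{+}|\,=\,\overline{\lambda}\,=\,|\Lambda^{\sigma}_{-}|.$$
Since that same lemma recovers the leaves of a minimal geodesic lamination as the path components of its support, the two laminations must agree as sets of leaves: $\Lambda^{\sigma}_{+}=\Lambda^{\sigma}_{-}$.

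Next I would exhibit a leaf that is semi-isolated in both. By Corollary~\ref{corcomp}~(\ref{itemcompl}), $U\setminus|\Lambda^{\sigma}_{+}|$ is a nonempty finite union of sets whose closures are crown sets; so some complementary component $V$ has $\delta V\ne\0$, and any border geodesic $\mu\in\delta V$ is a semi-isolated leaf of $\Lambda^{\sigma}_{+}$. Under the contradiction hypothesis $\Lambda^{\sigma}_{+}=\Lambda^{\sigma}_{-}$, this $\mu$ is simultaneously semi-isolated in $\Lambda^{\sigma}_{-}$. Lemma~\ref{pmtrans} then applies with $\lambda_{+}=\lambda_{-}=\mu$ and asserts that $\mu\cap\mu$ is nonempty and transverse. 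Since $\mu\cap\mu=\mu$ shares a whole arc of tangent directions, this is absurd, and the contradiction is complete.

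The only delicate point in this plan is the passage from a single shared leaf to full equality of the two laminations. It rests entirely on the structural fact, established in Section~\ref{poslam}, that the leaves of a minimal geodesic lamination are precisely the path components of its support. The remaining steps --- existence of a crown-set border leaf via Corollary~\ref{corcomp}~(\ref{itemcompl}) and the invocation of Lemma~\ref{pmtrans} --- are immediate from material already in hand.
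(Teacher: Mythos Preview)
Your argument is correct, but it follows a different route from the paper's. The paper's proof is constructive: immediately before the corollary, Proposition~\ref{neglam} builds $\Lambda^{\sigma}_{-}$ so that a specific semi-isolated leaf $\gamma^{-}_{1}$ has a lift with endpoints $b_{0},b_{1}\in\Si$ interleaved with the endpoints $a_{0},a_{1}$ of the lift $\wt\gamma^{+}_{1}$ of a border leaf of $\Lambda^{\sigma}_{+}$. That endpoint configuration forces $\wt\gamma^{-}_{1}$ to cross $\wt\Lambda^{\sigma}_{+}$ transversely, and minimality (density of $\gamma^{-}_{1}$ in $\Lambda^{\sigma}_{-}$) propagates this to every leaf.

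Your approach instead reduces everything to Lemma~\ref{pmtrans} as a black box: a shared leaf forces $\Lambda^{\sigma}_{+}=\Lambda^{\sigma}_{-}$ by minimality, whence any border leaf of a crown set is semi-isolated in both, and Lemma~\ref{pmtrans} then yields the contradiction. This is clean and avoids unpacking the explicit construction of $\Lambda^{\sigma}_{-}$ in Proposition~\ref{neglam}; on the other hand, the paper's argument is shorter and extracts the transversality directly from the geometry that was just set up. Both proofs ultimately rest on the same dynamical fact (the attracting/expanding dichotomy of Proposition~\ref{bothsides}), reached via Lemma~\ref{pmtrans} in your case and via the interleaving of the $a_{i}$ and $b_{i}$ in the paper's.
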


\begin{proof}

Refer to Figures~\ref{TwoPics}~(right). The geodesic $\wt\gamma^{-}_{1}$ with endpoints $b_{0},b_{1}\in\Si$ is a semi-isolated leaf of $\wt\Lambda^{\sigma}_{-}$ which is transverse to $\wt\Lambda^{\sigma}_{+}$. Since $\Lambda^{\sigma}_{-}$ is minimal, if $\gamma^{-}_{1}$ is the leaf of $\Lambda^{\sigma}_{-}$ with lift $\wt\gamma^{-}_{1}$, then $\gamma^{-}_{1}$ is dense in  $\Lambda^{\sigma}_{-}$. It follows that $\Lambda^{\sigma}_{-}$ is transverse to $\Lambda^{\sigma}_{+}$.
\end{proof}

Suppose $C_{+}$ is the  crown set of Proposition~\ref{bothsides}~(\ref{prttwo}) associated to $\{\gamma^{+}_{i}\}_{i\in\Z}\ss\Lambda^{\sigma}_{+}$  Let $p>0$, $\wt f_{*}^{p}$ be a lift of $f_{*}^{p}$, $\{\wt\gamma^{+}_{i}\}_{i\in\Z}$ be a lift of $\{\gamma^{+}_{i}\}_{i\in\Z}$, and $b_{i}$, $i\in\Z$, be fixed points of $\wt f_{*}^{p}$ as in Proposition~\ref{bothsides}~(\ref{prttwo}). Then, if $\gamma^{-}_{i}$ is the geodesic in $L$ with lift $\wt\gamma^{-}_{i}$ having endpoints $b_{i-1},b_{i}\in\Si$,  $i\in\Z$, then let $C_{-}$ be the crown set associated to $\{\wt\gamma^{-}_{i}\}_{i\in\Z}\ss\Lambda^{\sigma}_{-}$.

\begin{defn}

The crown sets $C_{\pm}$ of $\Lambda_{\pm}$ are said to be  \emph{complementary}. 

\end{defn}

The closure of components of $U\sm(|\Lambda^{\sigma}_{+}|\cup|\Lambda^{\sigma}_{-}|)$ are components of the intersection of (not necessarily complementary) crown sets  $C_{\pm}$ of $\Lambda^{\sigma}_{\pm}$. All but finitely many of these components  are rectangles.  The nonrectangular components consist of one component of the intersection of some pair of  complementary crown sets.

\begin{defn}

The nonrectangular component of $C_{+}\cap C_{-}$ is called the \emph{nucleus} of the complementary crown sets $C_{\pm}$. 

\end{defn}

\begin{rem}

The nulcleus of complemtary crown sets $C_{\pm}$ has one boundary component $\zeta$ which is piecewise geodesic closed curve with edges altenately in  $\gamma^{+}_{i}$ and $\gamma^{-}_{i+1}$, $i\in\Z$. Additionally the nucleus of complementary crowns $C_{\pm}$ will have a boundary component that is the common rim of $C_{\pm}$.

\end{rem}

\begin{cor}\label{compcompl}

The finitely many nuclei  of complementary crown sets $C_{\pm}$ of $\Lambda^{\sigma}_{\pm}$   are either,

\begin{enumerate}

\item Closed disks\upn{;}

\item Punctured disks\upn{;}

\item Annuli with one edge the common rim of the complementary crowns  $C_{\pm}$\upn{;}

\item M\"obius strips.

\end{enumerate}

\end{cor}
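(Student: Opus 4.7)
The plan is to work in the universal cover and reduce the classification of nuclei to the four-case classification of crown sets given earlier in Section~\ref{crsets}, case by case.

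First, I would verify that the complementary crown sets $C_+$ and $C_-$ are of the same topological type. By Proposition~\ref{bothsides}~(\ref{prttwo}) applied to both $\Lambda^{\sigma}_{+}$ and $\Lambda^{\sigma}_{-}$, the endpoints $a_i$ of the $\wt\gamma^+_i$ and the endpoints $b_i$ of the $\wt\gamma^-_i$ interleave on $\Si$, with $b_i\in(a_{i-1},a_i)$. Hence the sequence $\{a_i\}_{i\in\Z}$ is finite of length $p$ if and only if $\{b_i\}_{i\in\Z}$ is, and in the infinite case both sequences accumulate on the same set of points of $\Si$: either on a single common ideal point, or on the two endpoints of a common limit geodesic $\wt\rho\ss\Delta$. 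In the latter case $\wt\rho$ projects to a single simple closed curve $\rho\ss L$, two-sided or one-sided, that serves simultaneously as the rim of $C_+$ and of $C_-$, or as the center of both Möbius-strip crown sets. Thus $C_+$ and $C_-$ fall simultaneously into case (1), (2), (3), or (4) of the crown-set lemma.

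Next, I would describe the lift $\wt N$ of the nucleus. Let $\wt C_\pm$ be the closure of the convex hull of $\{\wt\gamma^\pm_i\}_{i\in\Z}$; these project onto $C_\pm$. The intersection $\wt C_+\cap\wt C_-$ has one nonrectangular component $\wt N$ whose boundary is the piecewise geodesic curve lifting $\zeta$ from the preceding remark, with edges that alternate between subarcs of the $\wt\gamma^+_i$ and of the $\wt\gamma^-_{i+1}$ cut at their mutual intersection points, together with the geodesic $\wt\rho$ in the infinite non-cusp case. Because $\wt C_\pm$ contain no lifts of leaves of $\wt\Lambda^\sigma_\pm$ in their interior (beyond the $\wt\gamma^\pm_i$ on the boundary), no leaf of either lamination crosses the interior of $\wt N$, so $\wt N$ is exactly the lift of the nucleus.

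I then handle the four cases in parallel with the classification of $C_\pm$. If the cycle is finite of length $p$, then $\wt N$ is a closed hyperbolic $2p$\nobreakdash-gon, projecting injectively to a closed disk. If the cycle is infinite and the $a_i$ (equivalently the $b_i$) limit to a common ideal point, then $\wt N$ is an infinite piecewise geodesic region converging to a single point of $\Si$, and projects to a punctured disk. If the limits are distinct and $\rho$ is two-sided, the stabilizer of $\wt N$ is the infinite cyclic group generated by the hyperbolic translation along $\wt\rho$, and the quotient is an annulus whose boundary components are $\zeta$ and the common rim $\rho$. If $\rho$ is one-sided, the stabilizer is generated by a glide reflection along $\wt\rho$, and the quotient is a Möbius strip with core $\rho$.

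The main obstacle is the bookkeeping of the deck transformation action on $\wt N$: one must confirm that in cases (1) and (2) no nontrivial deck transformation preserves $\wt N$, while in cases (3) and (4) the stabilizer is exactly the cyclic group preserving $\wt\rho$. This follows from the fact that a deck transformation preserving $\wt C_+$ (equivalently the cycle $\{\wt\gamma^+_i\}$) must either be trivial (finite and cusp cases, where the cycle has no nontrivial combinatorial symmetry realized by the covering) or a power of the translation/glide reflection along the common axis $\wt\rho$, combined with the verification in the previous paragraph that no additional leaves enter the interior of $\wt N$.
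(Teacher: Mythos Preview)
Your approach is correct and is precisely the argument the paper leaves implicit: the corollary is stated without proof, immediately after the definition of nucleus and the remark on its boundary curve $\zeta$, as a direct consequence of the four-case classification of crown sets earlier in Section~\ref{crsets}. Your key observation---that the interleaving $b_i\in(a_{i-1},a_i)$ forces the sequences $\{a_i\}$ and $\{b_i\}$ to be simultaneously finite, or to accumulate on the same ideal point or the same pair of endpoints of $\wt\rho$---is exactly what makes $C_+$ and $C_-$ fall into the same case and share a common rim or core, so that the nucleus inherits the same topological type with the ideal boundary punctures replaced by the piecewise geodesic curve $\zeta$.
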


\subsection{Further properties of $\Lambda^{\sigma}_{\pm}$ and $\LL^{\sigma}_{+}$}

\begin{prop}\label{lamuniq}

The laminations $\Lambda^{\sigma}_{\pm}$ are independent of the choice of  minimal lamination $\Lambda^{\sigma}_{\pm}\ss\LL^{\sigma}_{\pm}$, and simple closed geodesic $\sigma\ss U$ such that $f_{*}^{n}(\sigma)$, $n\in\Z$, are distinct.

\end{prop}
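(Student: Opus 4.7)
The plan is to break Proposition~\ref{lamuniq} into two independence claims and argue them in order: (A) for fixed $\sigma$, any two minimal sublaminations of $\LL^{\sigma}_{+}$ coincide, and (B) the resulting $\Lambda^{\sigma}_{+}$ does not depend on $\sigma$. The statements for $\Lambda^{\sigma}_{-}$ will follow by the identical argument with $f_{*}^{n}$ replaced by $f_{*}^{-n}$.

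For (A), I would take $\Lambda,\Lambda'\subset\LL^{\sigma}_{+}$ minimal with $\Lambda\ne\Lambda'$ and derive a contradiction. The entire development of Sections~\ref{poslam}--\ref{crsets} applies with $\Lambda^{\sigma}_{+}$ replaced by $\Lambda'$, so Lemma~\ref{minr} furnishes a common $r>0$ with $f_{*}^{r}\Lambda=\Lambda$ and $f_{*}^{r}\Lambda'=\Lambda'$, and Corollary~\ref{corcomp}(3) makes each of $U\sm|\Lambda|$ and $U\sm|\Lambda'|$ a finite disjoint union of crown-set interiors. Distinct minimal laminations share no leaf, so I split into two sub-cases. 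If $|\Lambda|\cap|\Lambda'|=\0$, then connectedness of $|\Lambda'|$ (closure of any dense leaf) places it inside a single component of $U\sm|\Lambda|$ whose internal completion is a crown set; but any crown set admits only finitely many pairwise disjoint simple geodesics (their lifted endpoints must lie among the finitely many ideal vertices of the polygon and the at most two endpoints of any lifted rim, with different rim-winding spirals from a given vertex projecting to the same annular curve), contradicting the uncountability of the minimal lamination $\Lambda'$ guaranteed by Proposition~\ref{homeoR}. If instead some leaves of $\Lambda,\Lambda'$ cross transversely, I extract periodic leaves $\mu\in\Lambda$ and $\mu'\in\Lambda'$ still crossing transversely (a dense periodic leaf passes through a neighborhood of any fixed transverse crossing). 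After replacing $p$ by a multiple so that $f_{*}^{p}$ fixes both $\mu,\mu'$ and $f^{p}$ is orientation-preserving, I fix a lift $\wt F$ of $f_{*}^{p}$ with $\wt F(\wt\mu)=\wt\mu$. Proposition~\ref{bothsides} then says that the endpoints $a_{0},a_{1}\in\Si$ of $\wt\mu$ are contracting fixed points of $\wh F$ and that $\wh F$, being an orientation-preserving monotone homeomorphism on each arc between consecutive $\wh F$-fixed points, has open basins of attraction at $a_{0}$ and $a_{1}$. Density of deck translates of a lift of $\mu'$ lets me select $\wt\mu'$ so that its two endpoints fall in the two basins respectively; then $\wh F^{n}$ drives these endpoints to $\{a_{0},a_{1}\}$, so $\wt F^{n}(\wt\mu')\to\wt\mu$ as geodesics. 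Since each $\wt F^{n}(\wt\mu')$ is a lift of $f_{*}^{np}(\mu')=\mu'$, the leaf $\mu$ lies in the closure of $\Lambda'$ as a lamination, and minimality of $\Lambda'$ forces $\mu\in\Lambda'$; hence $\Lambda=\Lambda'$, contradicting $\Lambda\ne\Lambda'$.

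For (B), let $\sigma'$ be another admissible geodesic, and pick a periodic leaf $\lambda\in\Lambda^{\sigma}_{+}$ with lift $\wt\lambda$ fixed by $\wt f_{*}^{p}$ and contracting endpoints $a_{0},a_{1}$ of $\wh f_{*}^{p}$. Corollary~\ref{corcomp}(4) together with the crown-set decomposition of $U\sm|\Lambda^{\sigma}_{+}|$ forces $\sigma'$ to meet $|\Lambda^{\sigma}_{+}|$ transversely, so some lift $\wt\sigma'$ crosses $\wt\lambda$; by the same density argument as in (A) I can arrange the endpoints of $\wt\sigma'$ to lie in the basins of $a_{0}$ and $a_{1}$, so that $\wt f_{*}^{kp}(\wt\sigma')\to\wt\lambda$. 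These are lifts of $\sigma'_{kp}=f_{*}^{kp}(\sigma')$, and passing to the sub-subsequence of $k$ with $kp$ a multiple of any prescribed $p'>0$ verifies $\lambda\in\LL^{\sigma'}_{+}$ directly from Definition~\ref{defnLL}. The analogue of Lemma~\ref{convto} then propagates this to $|\Lambda^{\sigma}_{+}|=\ol\lambda\subset|\LL^{\sigma'}_{+}|$, exhibiting $\Lambda^{\sigma}_{+}$ as a minimal sublamination of $\LL^{\sigma'}_{+}$; applying (A) to $\sigma'$ concludes that $\Lambda^{\sigma}_{+}=\Lambda^{\sigma'}_{+}$.

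The hard part will be the transverse sub-case of (A): realizing the convergence $\wt F^{n}(\wt\mu')\to\wt\mu$ non-degenerately requires selecting a deck translate of $\wt\mu'$ whose two endpoints lie in the \emph{distinct} basins attracting to $a_{0}$ and $a_{1}$, not both in the basin of a single $a_{i}$ (in which case the iterated geodesic would collapse to a point of $\Si$). Securing this configuration uses density of deck orbits on $\Se$ combined with openness of the basins, and it is the step most in need of careful verification.
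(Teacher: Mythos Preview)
Your overall architecture is sound and the disjoint sub-case of (A) is correct, but the transverse sub-case (and hence also (B), which relies on the same maneuver) has a genuine gap exactly where you flag it.

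The difficulty is this. With $\wt F=\wt f_{*}^{p}$ fixing $\wt\mu$, Proposition~\ref{bothsides} tells you that (in the non--semi-isolated case) $\wh F$ has precisely four fixed points $a_{0},b_{1},a_{1},b_{0}$ in cyclic order, so the basin of $a_{0}$ is the open arc $(b_{0},b_{1})\ni a_{0}$ and the basin of $a_{1}$ is the complementary open arc. What you need is a lift of $\mu'$ that \emph{crosses the geodesic $\wt\gamma$ with endpoints $b_{0},b_{1}$}. Your proposed fix, ``density of deck translates of a lift of $\mu'$ on $\Se$ combined with openness of the basins,'' does not supply this: density of the $G$--orbit of a single point in $\Si$ says nothing about the $G$--orbit of an ordered pair in $\Si\times\Si\sm\Delta$. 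The endpoints of the lifts of $\mu'$ lie in the closed, $G$--invariant, nowhere--dense set of endpoint pairs of $\wt\Lambda'$, and there is no a~priori reason this set meets the open set $\{\text{pairs linking }(b_{0},b_{1})\}$. Concretely, any lift of $\mu'$ that crosses $\wt\mu$ near $a_{0}$ has \emph{both} endpoints in the basin of $a_{0}$, and forward iterates collapse to $a_{0}$ rather than to $\wt\mu$; density of such crossings along $\wt\mu$ (Lemma~\ref{bothends}) is therefore useless here.

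The paper avoids this trap by working on the other side of the saddle: it uses the leaf $\gamma^{-}_{1}\in\Lambda^{\sigma}_{-}$, whose lift has endpoints at the \emph{expanding} points $b_{0},b_{1}$, and picks a lift of the competing leaf $\gamma\in\Lambda$ that crosses $\wt\gamma^{-}_{1}$ near $b_{1}$ (possible because $\gamma\cap\gamma^{-}_{1}$ accumulates at both ends of $\gamma^{-}_{1}$, via Lemma~\ref{bothends}). Near a repeller the two sides are automatically sent to the two adjacent attractors $a_{0},a_{1}$, so $\wt f_{*}^{np}(\wt\gamma)\to\wt\gamma^{+}_{1}$ without any basin--selection argument. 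Since $\Lambda$ is $f_{*}^{p}$--invariant (Lemma~\ref{minr} applied to $\Lambda$) and closed, this forces $\gamma^{+}_{1}\in\Lambda$, hence $\Lambda^{\sigma}_{+}\subset\Lambda$ and equality by minimality. Note that this single argument takes $\Lambda\subset\LL^{\tau}_{+}$ for \emph{any} admissible $\tau$, so it dispatches your (A) and (B) simultaneously and never splits into disjoint/transverse cases. If you want to rescue your route, the honest repair is to show that $\mu'$ must cross the geodesic $\gamma$ with lift $\wt\gamma$; but since $\gamma\in\Lambda^{\sigma}_{-}$ by Proposition~\ref{neglam}, this again forces you through $\Lambda^{\sigma}_{-}$ and the crown--set finiteness, at which point you have essentially reproduced the paper's proof with extra scaffolding.
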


\begin{proof}

We prove the proposition for $\Lambda^{\sigma}_{+}$. The other case is analogous. Suppose $\Lambda\ss\LL^{\tau}_{+}$ is a minimal lamination of   $\LL^{\tau}_{+}$ and $\tau\ss U$ a simple closed geodesic   such that $f_{*}^{n}(\tau)$, $n\in\Z$, are distinct. If $\gamma\in\Lambda$, since $\Lambda^{\sigma}_{+}$ is minimal and $\Lambda\ss U$, then $\gamma\cap\gamma^{-}_{1}$ clusters at both ends of $\gamma^{-}_{1}$ where $\gamma^{-}_{1}$ is the leaf of $\Lambda^{\sigma}_{-}$ with lift $\wt\gamma^{-}_{1}$ having endpoints  $b_{0},b_{1}\in\Si$ (Figure~\ref{TwoPics}~(right)). Choose a lift $\wt\gamma$  of $\gamma$ near $b_{1}$. Then $\wt f_{*}^{np}(\wt\gamma)\to\wt\gamma^{+}_{1}$ as $n\to\infty$. Thus $\gamma^{+}_{1}\in\Lambda$ so $\Lambda^{\sigma}_{+}\ss\Lambda$. Since $\Lambda$ is minimal, it follows that $\Lambda^{\sigma}_{+} = \Lambda$.
\end{proof}

\begin{defn}

Define $\Lambda_{\pm} = \Lambda^{\sigma}_{\pm}$.

\end{defn}

\begin{rem}

We will still use the  notation $\Lambda^{\sigma}_{\pm}$ because it specifies the geodesic $\sigma$.

\end{rem}

\begin{lemma}\label{finitemany}

$\LL^{\sigma}_{+}\sm\Lambda^{\sigma}_{+}$ and $\LL^{\sigma}_{-}\sm\Lambda^{\sigma}_{-}$ are each a finite set of geodesics.

\end{lemma}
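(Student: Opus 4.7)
The plan is to show that every $\gamma \in \LL^\sigma_+ \sm \Lambda^\sigma_+$ is a simple geodesic contained in some crown-set component of $L \sm |\Lambda^\sigma_+|$, and then that each such component contains only finitely many simple complete geodesics. Since by the lemma in Section~\ref{crsets} there are only finitely many such components, the result will follow, and the $\LL^\sigma_-$ case will be handled by the symmetric argument announced in Section~\ref{neglams}.

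First I would show that $\gamma$ is disjoint from $|\Lambda^\sigma_+|$. By Lemma~\ref{convto}, the closure $\ol\gamma$ is a lamination $\Lambda_\gamma \ss \LL^\sigma_+$. By the compactness corollary preceding Proposition~\ref{lamuniq}, $\Lambda_\gamma$ contains a minimal sublamination, and by Proposition~\ref{lamuniq} that sublamination must be $\Lambda^\sigma_+$, so $\Lambda^\sigma_+ \ss \Lambda_\gamma$. Since the leaves of a lamination are pairwise disjoint, $\gamma$ is disjoint from every leaf of $\Lambda^\sigma_+$, hence from $|\Lambda^\sigma_+|$. Combined with Corollary~\ref{ssU}, the connected geodesic $\gamma$ lies in a single component $V$ of $U\sm|\Lambda^\sigma_+|$, whose closure is a crown set by Corollary~\ref{corcomp}~(\ref{itemcompl}).

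Second I would show that any crown set $V$ contains only finitely many simple complete geodesics. Fix a lift $\wt V \ss \Delta$. Because $\wt V$ is the (image of the) closed convex hull of its border lifts $\{\wt\gamma^+_i\}_{i\in\Z}$, every complete geodesic of $V$ lifts to a geodesic of $\Delta$ with both endpoints in $\bd\wt V \cap \Si$. By the classification of crown sets this set consists of the ideal vertices $\{a_i\}$ together with, depending on type, no extra points (disk), one cusp limit point (punctured disk), or the two endpoints of the rim lift or M\"obius-center lift (annulus or M\"obius). In the simply-connected case $\{a_i\}$ is finite of size $p$, giving at most $\binom{p}{2}$ simple geodesics in $V$. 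In the other three cases the stabilizer of $\wt V$ in the deck group is cyclic, generated by a hyperbolic isometry $T$ with $T(a_i)=a_{i+N}$ where $N$ is the number of ideal-vertex orbits; if a diagonal $\wt\gamma=[a_i,a_j]$ is to project to a \emph{simple} geodesic in $V$ then no translate $T^k\wt\gamma=[a_{i+kN},a_{j+kN}]$ may interleave with $\wt\gamma$ on $\Si$, which forces $|j-i|\le N$. This leaves at most $N^{2}$ $T$-orbits of diagonals, plus a bounded number of geodesics having at least one endpoint at an accumulation point of $\{a_i\}$; all finite.

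Combining, $\LL^\sigma_+ \sm \Lambda^\sigma_+$ is contained in the finite union, over the finitely many crown sets, of the finitely many simple complete geodesics each contains, hence is itself finite; the argument for $\LL^\sigma_- \sm \Lambda^\sigma_-$ is identical with the roles of $\Lambda^\sigma_\pm$ exchanged. The main obstacle is the counting in step two for the non-simply-connected crown sets: one must convert simplicity of the projection into a concrete upper bound on the deck-index gap $|j-i|$ by ruling out interleaving of $T$-translates on $\Si$; once this bound is secured, the finiteness of the remaining pieces is routine.
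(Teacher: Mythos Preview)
Your argument is correct and follows the same two-step outline as the paper: first show that any $\gamma\in\LL^\sigma_+\sm\Lambda^\sigma_+$ lies in a crown set of $\Lambda^\sigma_+$, then count. Your step one (via Lemma~\ref{convto} and Proposition~\ref{lamuniq}, forcing $\Lambda^\sigma_+\ss\Lambda_\gamma$ so that $\gamma$ is disjoint from $|\Lambda^\sigma_+|$) spells out what the paper compresses into one sentence. In step two the paper takes a shorter route: it asserts, without justification, that the lift $\wt\gamma$ has both ideal endpoints among the cusp vertices $a_i$ of $\wt C_+$ and then invokes finiteness of the cycle $\{\gamma^+_i\}_{i\in\Z}$; the omitted justification is that an endpoint of $\wt\gamma$ at a rim endpoint (or the parabolic fixed point of a cusped crown set) would force $\gamma$ to spiral onto a closed geodesic in $\Gamma\cup\bd L$ (or to exit a cusp), and Proposition~\ref{spirals} (respectively Proposition~\ref{cuspnhb}) then gives a contradiction because the approximating $\sigma_{n_k}$ are simple closed geodesics in $U$. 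You bypass this dynamical step by bounding \emph{all} simple complete geodesics in a crown set, allowing endpoints at the accumulation points as well; this costs you the interleaving bound $|j-i|\le N$ in the non-simply-connected cases but makes the finiteness count self-contained. Either route works.
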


\begin{proof}

We prove that $\LL^{\sigma}_{+}\sm\Lambda^{\sigma}_{+}$ is finite. The proof of the other case is analogous. We use the notation of Proposition~\ref{bothsides}~(\ref{prttwo}). Thus, if $\gamma\in\LL^{\sigma}_{+}\sm\Lambda^{\sigma}_{+}$, then $\gamma$ lies in a crown set $C_{+}$. It follows that the lift $\wt\gamma$ of $\gamma$ to $\wt C_{+}$ has vertices $a_{i}, a_{j}\in\Si$, $i\ne j\in\Z$. Since the set $\{\gamma^{+}_{i}\}_{i\in\Z}$ is finite, there can be only finitely many such $\gamma$.
\end{proof}







The proof of Theorem~\ref{proplams} is complete.

\section{Defining $h$ and the proof of Theorem~\ref{NTthm}}

By~\cite[Theorems~2.1 and~3.3]{Epstein:isotopy}  and induction,  there exists an ambient isotopy $\Phi:L\times[0,1]\to L$ such that $\Phi^{0} = \id$ and $\Phi^{1}\circ f(\gamma) = f_{*}(\gamma)$ for every   $\gamma\in\Gamma$. Here we use the notation, $\Phi^{t}(x) = \Phi(x,t)$. In abuse of notation, denote $\Phi^{1}\circ f$ by $f$.  Thus, from now on we  assume  that $f(\gamma) = f_{*}(\gamma)$  for every   $\gamma\in\Gamma$.














\begin{prop}

There exists a homeomorphism $h:L\to L$, isotopic to $f$such that,

\begin{enumerate}

\item If $U$ is  a component of $L\sm|\Gamma|$, then $h(U)$ is  a component of $L\sm|\Gamma|$\upn{;}\label{111} 

\item If $U$ is a periodic (respectively pseudo-Anosov) component, then $h(U)$  is a periodic (respectively pseudo-Anosov) component\upn{;}\label{222}

\item $h(\Lambda^{\sigma}_{\pm}) = \Lambda^{f_{*}(\sigma)}_{\pm}$ if $\sigma$ is a simple closed  geodesic in a pseudo-Anosov component.

\end{enumerate}

\end{prop}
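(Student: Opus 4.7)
The plan is to produce $h$ by post-composing $f$ with an ambient isotopy supported on neighborhoods of the pseudo-Anosov components, designed to straighten the image laminations. Since the preparatory isotopy already arranges $f(\gamma) = f_{*}(\gamma)$ for every $\gamma \in \Gamma$, the map $f$ preserves $|\Gamma|$ setwise and therefore permutes the components of $L\sm|\Gamma|$, establishing (\ref{111}). Moreover, because $f$ agrees with $f_{*}$ on the boundary geodesics $\bd U \subset \Gamma$ of any component $U$, the image $f(U)$ coincides with $f_{*}(U)$ (viewing $f_{*}$ as acting on components per the paper's convention). For (\ref{222}), if $U$ is pseudo-Anosov with simple closed geodesic $\sigma \subset U$ having distinct $f_{*}$-iterates, then $f_{*}(\sigma) \subset f_{*}(U) = f(U)$ and the iterates $f_{*}^{n+1}(\sigma)$ remain distinct, so $f(U)$ is pseudo-Anosov; the reverse implication uses $f^{-1}$.

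For (3), Proposition \ref{lamuniq} shows that $\Lambda^{\sigma}_{\pm}$ depends only on the pseudo-Anosov component $U = U_{\sigma}$, so we may write $\Lambda^{U}_{\pm}$. Shifting the indexing sequence in Definition \ref{defnLL} by one and applying Lemma \ref{f*seq} yields $f_{*}(\LL^{\sigma}_{\pm}) = \LL^{f_{*}(\sigma)}_{\pm}$, and passing to minimal sublaminations gives $f_{*}(\Lambda^{U}_{\pm}) = \Lambda^{f_{*}(U)}_{\pm}$. Thus the pseudo-geodesic lamination $f(\Lambda^{U}_{\pm}) \subset f(U)$ has geodesic tightening precisely the desired target $\Lambda^{f_{*}(U)}_{\pm}$, and the two are freely isotopic in $f(U)$ rel $\bd f(U) \subset |\Gamma|$.

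For each $f_{*}$-orbit of pseudo-Anosov components, choose a representative $U$ and construct an ambient isotopy $\Psi_{U}$ of $\overline{f(U)}$ fixing $\bd f(U)$ pointwise and carrying $f(\Lambda^{U}_{+} \cup \Lambda^{U}_{-})$ onto $\Lambda^{f_{*}(U)}_{+} \cup \Lambda^{f_{*}(U)}_{-}$. Propagate $\Psi_{U}$ around the orbit using $f$ to make the choices equivariant, extend by the identity to all of $L$ to obtain a global isotopy $\Psi$, and set $h = \Psi \circ f$. Conditions (\ref{111}) and (\ref{222}) survive because $\Psi$ is supported away from $|\Gamma|$.

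The principal obstacle is constructing the joint straightening $\Psi_{U}$. Individually, each $\Lambda^{U}_{\pm}$ can be carried to its geodesic tightening by a classical lamination-straightening argument, but handling the two simultaneously requires preserving their transverse intersection pattern. The finite combinatorial decomposition of $f(U)$ from Theorem \ref{proplams}(3) makes this tractable: on the finitely many non-rectangular nuclei enumerated in Corollary \ref{compcompl} (disks, punctured disks, annuli, M\"obius strips), the straightening reduces to a direct identification of a finite boundary configuration of leaves; on the complementary rectangles, the absence of interior leaves makes the straightening essentially automatic once the boundary matching on the nuclei, and on the rims of crown sets common to both $\Lambda^{U}_{+}$ and $\Lambda^{U}_{-}$, is chosen consistently.
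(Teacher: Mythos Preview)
Your outline for parts~(\ref{111}) and~(\ref{222}) is fine and matches the paper. The divergence is in part~(3), where your strategy is to start from $f$ and then find an ambient isotopy $\Psi_{U}$ straightening $f(\Lambda^{U}_{+}\cup\Lambda^{U}_{-})$ onto its geodesic tightening. The paper does \emph{not} do this: it defines $h$ from scratch on each pseudo-Anosov component, working in the universal cover. Setting $X = |\Lambda^{\sigma}_{+}|\cap|\Lambda^{\sigma}_{-}|$, it defines $\wt h(\wt x) = \wt f_{*}(\wt\lambda_{-})\cap\wt f_{*}(\wt\lambda_{+})$ for $\wt x = \wt\lambda_{-}\cap\wt\lambda_{+}\in\wt X$, extends linearly and equivariantly over arcs of $|\wt\Lambda^{\sigma}_{+}|\cup|\wt\Lambda^{\sigma}_{-}|$ and over rectangles (following Casson--Bleiler), and finally extends over the finitely many nuclei. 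The isotopy between $h$ and $f$ is then obtained for free: since $\wh h|\Se = \wh f|\Se$, the cited result \cite[Corollary~5]{cc:epstein} gives the isotopy directly. This bypasses the straightening problem altogether.

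Your route, by contrast, leaves the hard part undone. You correctly identify the joint straightening of both laminations as the ``principal obstacle'', but your final paragraph only gestures at a construction. The claim that individual geodesic laminations can be straightened by a ``classical'' argument already needs justification (this is not simply Epstein's curve isotopy theorem), and the simultaneous straightening of two transverse laminations while fixing $\bd f(U)$ pointwise is not established by pointing to the nucleus/rectangle decomposition. Matching boundary configurations on nuclei and declaring the rectangles ``automatic'' is not a proof: one must produce an actual ambient isotopy, check continuity across the uncountably many rectangles accumulating on the laminations, and verify everything glues. The paper's device of defining $h$ via $\wt f_{*}$ on the intersection grid is precisely what makes this tractable, and it is the missing idea in your argument.
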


\begin{proof}

Choose lifts $\wt L$ af $L$ and $\wt f:\ol L\to\wt L$ of $f:L\to L$. We will define $\wt h:\wt L\to\wt L$ so that $\wh h|\Se.= \wh f|\Se$.  First define $\wt h|\wt\gamma = \wt f|\wt\gamma$ if $\gamma\in\Gamma$ or $\gamma\ss\bd L$. Then (\ref{111}) follows since $f(\gamma) = f_{*}(\gamma)$  for every   $\gamma\in\Gamma$. Further,  if $U$ is a pseudo-Anosov somponent and $\sigma\ss U\sm\bd L$ is a simple closed  geodesic, then Lemma~\ref{f*seq} implies that  $f_{*}(\Lambda^{\sigma}_{\pm}) = \Lambda^{f_{*}(\sigma)}_{\pm}$ and $f(U)$ is a pseudo-Anosov component     and  (\ref{222}) follows. 

Next define  $\wt h|\wt U = \wt f|\wt U$ if $U$ is a periodic component. 

Now suppose $U$ is a pseudo-Anosov component and $\sigma\ss U$ is a simple closed  geodesic. To define $\wt h:\wt U\to\wt{h(U)}$, we mimic the proof of Lemma~6.1 of Casson-Bleiler~\cite[pp.~89-90]{bca}.   Choose  lifts $\wt U$ of $U$, $\wt{h(U)}$ of $h(U)$, and  $\wt f:\wt U\to \wt{h(U)}$ of $f$.  Let $\wh U$ (respectively $\wh{h(U}$) be the closure of $\wt U$ (respectively $\wt{h(U}$)  in $\D^{2}$  and let $\wh f:\wh U\to\wh{h(U)}$ be the extension of $\wt f$ to $\wh U$.  Let 
$$X = |\Lambda^{\sigma}_{+}|\cap|\Lambda^{\sigma}_{-}|, \qquad X^{*} = |\Lambda^{f_{*}(\sigma)}_{+}|\cap|\Lambda^{f_{*}(\sigma)}_{-}|$$  and $\wt X$ (respectively $\wt X^{*}$) the  lift of   $X$ to $\wt U$ (respectively $X^{*}$ to $\wt{h(U)}$). Define $\wt h:\wt X\to\wt X^{*}$ as follows. If $\wt x = \wt \lambda_{-}\cap\wt\lambda_{+}$ with $\wt\lambda_{-}\in\wt\Lambda_{-}$ and $\wt\lambda_{+}\in\wt\Lambda_{+}$, define $\wt h(\wt x) = \wt f_{*}(\wt\lambda_{-})\cap\wt f_{*}(\wt\lambda_{+})$.

The fact that $\wh f:\wh U\to\wh{h(U)}$ is continuous implies that the map $\wt h:\wt X\to\wt X^{*}$ is a homeomorphism.    As in   Casson-Bleiler~\cite[pp.~89-90]{bca}, we extend $\wt h$ linearly and equivariantly over any lift of an arc of $|\wt\Lambda^{\sigma}_{+}|\cup|\wt\Lambda^{\sigma}_{-}|$ with both endpoints in $\wt X$ and interior disjoint from $\wt X$ and equivariantly over the  lifts of rectangular components of $U\sm(|\Lambda^{\sigma}_{+}|\cup|\Lambda^{\sigma}_{-}|)$   using the technique of  Casson-Bleiler~\cite[pp.~90]{bca}. It follows from Corollary~\ref{compcompl} that the closures of the  nonrectangular components of $U\sm(|\Lambda^{\sigma}_{+}|\cup|\Lambda^{\sigma}_{-}|)$ consist of finitely many sets which are either,
\begin{enumerate}

\item Annuli that are the nuclei of pairs of complementary crowns with one edge a rim of the crown\upn{;}\label{first}

\item Disks that are the nuclei of pairs of complementary crown sets\upn{;}

\item  Punctured disks that are the nuclei of pairs of complementary crown sets\upn{;}

\item M\"obius strips that are the nuclei of pairs of complementary crown sets\upn{;}\label{fourth}

\item Annuli formed by pasting   the nuclei of two pairs of complementary crowns   together along their common rims.\label{fifth}

\end{enumerate}

The rims of the crown sets in cases~(\ref{first}) and~(\ref{fifth}) and the center geodesic of the M\"obius strips in case~(\ref{fourth}) belong to $\Gamma$  so the homeomorphism $h$ has already been defined above on these geodesics. In all cases, the homeomorphism $\wt h$ can be extended equivariantly in an  arbitrary way over the lifts of the nonrectangular components of $U\sm(|\Lambda^{\sigma}_{+}|\cup|\Lambda^{\sigma}_{-}|)$.

Both $\wh f,\wh h:\wh U\to \wh{h(U)}$ are defined and agree on $\Se$. Thus,~\cite[Corollary~5]{cc:epstein} implies that the homeomorphisms $h$ and $f$ are isotopic.  
\end{proof}

\subsection{Completing the proof of Theorem~\ref{NTthm}}

The next two lemmas complete the proof of Theorem~\ref{NTthm}. In the lemmas, $U$ is a component of $L\sm|\Gamma|$, $S$ is the internal completion of $U$, and $n_{S} = n_{U}$. The map $h:U\to h(U)$ extends in a natural way to a map $h:S\to h(S)$ where $h(S)$ is the internal completion of $h(U)$.

\begin{lemma}

If $S$ is a pseudo-Anosov component, the homeomorphism $h^{n_{S}}:S\to S$ satisfies \emph{Theorem~\ref{NTthm}~(\ref{NT2})}.

\end{lemma}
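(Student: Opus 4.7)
The plan is to verify the three conditions of Theorem~\ref{NTthm}~(\ref{NT2}) for the restriction $h^{n_{S}}|S$ acting on the laminations $\Lambda_{\pm}$ produced by Theorem~\ref{proplams} for the pseudo-Anosov component $U=S\sm\bd S$. Pick any simple closed geodesic $\sigma\ss U\sm\bd L$; since $U$ is pseudo-Anosov its $f_{*}$-iterates are distinct, so Theorem~\ref{proplams} applies and yields $\Lambda_{\pm}=\Lambda^{\sigma}_{\pm}$.

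First I would verify that $h^{n_{S}}$ permutes the pair $\{\Lambda_{+},\Lambda_{-}\}$. The proposition that defined $h$ gives $h(\Lambda^{\sigma}_{\pm})=\Lambda^{f_{*}(\sigma)}_{\pm}$; iterating $n_{S}$ times yields $h^{n_{S}}(\Lambda_{\pm})=\Lambda^{f_{*}^{n_{S}}(\sigma)}_{\pm}$. Because $f_{*}^{n_{S}}(U)=U$ by definition of $n_{S}$, the geodesic $f_{*}^{n_{S}}(\sigma)$ again lies in $U$, and the uniqueness statement of Proposition~\ref{lamuniq} identifies $\Lambda^{f_{*}^{n_{S}}(\sigma)}_{\pm}$ with $\Lambda^{\sigma}_{\pm}=\Lambda_{\pm}$. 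Hence $h^{n_{S}}$ preserves each of $\Lambda_{\pm}$ individually. Minimality of $\Lambda_{\pm}$ with every leaf dense in the lamination is built into the construction in Section~\ref{poslam}, and transversality is Corollary~\ref{trlams}.

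The heart of the argument is to show that every closed two-sided geodesic $\tau\ss S\sm\bd S$ meets both $|\Lambda_{+}|$ and $|\Lambda_{-}|$. Suppose $\tau\cap|\Lambda_{+}|=\0$. Then $\tau\ss U$ lies in some component of $U\sm|\Lambda_{+}|$, whose closure is contained in a crown set $C_{+}$ by Corollary~\ref{corcomp}~(\ref{itemcompl}). I would now argue case by case on the four topological types of $C_{+}$. A disk with boundary punctures contains no essential simple closed curve. In a punctured disk or a M\"obius strip with boundary punctures, an essential two-sided simple closed curve is forced to be boundary-parallel, and its geodesic representative is driven into a cusp of $L$, contradicting Proposition~\ref{cuspnhb}. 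In an annulus with punctures on one boundary, the only essential free homotopy class is that of the rim, whose geodesic representative is the rim itself; by Corollary~\ref{corcomp} the rim lies in $R^{\sigma}\ss\Gamma$, hence in $\bd S$, contradicting $\tau\ss S\sm\bd S$. The same reasoning applied with $\Lambda_{-}$ in place of $\Lambda_{+}$ gives $\tau\cap|\Lambda_{-}|\ne\0$.

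The main obstacle will be handling the M\"obius strip crown set cleanly: one must verify that every essential two-sided simple closed curve in a M\"obius strip with boundary punctures is boundary-parallel (so that its closed-geodesic realization, were it to exist, would be forced to spiral into a cusp). This rests on the topological fact that the two-sided essential simple classes in a M\"obius strip are all isotopic to twice the core, which in the punctured setting is either isotopic into a cusp neighborhood or bounds off a subdisk containing some of the boundary punctures. Once this case is dispatched the other three cases are routine applications of Proposition~\ref{cuspnhb} and Corollary~\ref{corcomp}.
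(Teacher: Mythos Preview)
Your permutation argument---iterating $h(\Lambda^{\sigma}_{\pm})=\Lambda^{f_{*}(\sigma)}_{\pm}$ and invoking Proposition~\ref{lamuniq}---is exactly the paper's proof, and the paper stops there: minimality, transversality, and the intersection property are treated as already established by Theorem~\ref{proplams} and Corollary~\ref{corcomp}, so the lemma's proof in the paper is just those two lines.

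Your additional case analysis for the intersection property is worth spelling out, and the disk, punctured-disk, and annulus cases are essentially right. The M\"obius strip case, however, is mishandled. The ``boundary punctures'' of a crown set are ideal endpoints on $\Si$ where adjacent border leaves meet; they are \emph{not} cusps of $L$, so nothing is ``driven into a cusp'' and Proposition~\ref{cuspnhb} is irrelevant here. The correct argument is that the core $\rho$ of a M\"obius strip crown set lies in $R^{\sigma}$, hence in $\Gamma$ by Corollary~\ref{corcomp}(1) (it cannot lie in $\bd L$, being one-sided). The unique essential two-sided simple closed curve in the crown set is freely homotopic in $L$ to $\rho^{2}$, whose geodesic representative is $\rho$ traversed twice---not a simple curve. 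Hence no simple closed two-sided geodesic can live there. Equivalently, since $\rho\in\Gamma$ one has $\rho\notin U$, so the crown set meets $U$ only in $C_{+}\sm\rho$, an open annulus whose core class is $[\rho^{2}]$; the same conclusion follows. Your closing paragraph acknowledges the M\"obius case as the obstacle but then resolves it with the wrong mechanism.
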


\begin{proof}

Since $h:U\to h(U)$ satisfies $h(\Lambda^{\sigma}_{\pm}) = \Lambda^{f_{*}(\sigma)}_{\pm}$, it follows that $h:S\to h(S)$ satisfies $h(\Lambda^{\sigma}_{\pm}) = \Lambda^{f_{*}(\sigma)}_{\pm}$ and $h^{n_{S}}:S\to h^{n_{S}}(S) = S$ satisfies $h^{n_{S}}(\Lambda^{\sigma}_{\pm}) = \Lambda^{f^{n_{S}}_{*}(\sigma)}_{\pm}$.  But by Proposition~\ref{lamuniq}, $\Lambda^{f^{n_{S}}_{*}(\sigma)}_{\pm} = \Lambda^{\sigma}_{\pm}$.
\end{proof}

\begin{lemma}

If $S$ is a periodic component, the homeomorphism $h^{n_{S}}:S\to S$ satisfies \emph{Theorem~\ref{NTthm}~(\ref{NT1})}.

\end{lemma}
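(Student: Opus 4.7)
The plan is to show that $[h^{n_{S}}]$ has finite order in the mapping class group $\mathrm{MCG}(S,\bd S)$, from which a suitable power of $h^{n_{S}}$ being isotopic to the identity is immediate. The key input is that since $U$ is a periodic component, every simple closed geodesic $\tau\ss U\sm\bd L$ satisfies $f_{*}^{k}(\tau)=\tau$ for some $k>0$, so $(h^{n_{S}})_{*}$ acts on the set of isotopy classes of essential simple closed curves in $S$ (through their geodesic representatives) with every orbit finite.

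If $S$ has low complexity---a disk with punctures, an annulus, a M\"obius band, or a pair of pants possibly with punctures---then $\mathrm{MCG}(S,\bd S)$ is generated up to finite index by Dehn twists and finite-order permutations of the punctures, and the finite-orbit condition forces a power of $h^{n_{S}}$ to be isotopic to the identity directly. Otherwise I would pick a filling collection $\gamma_{1},\ldots,\gamma_{m}$ of simple closed geodesics in $U$, so that $S\sm\bigcup_{i}\gamma_{i}$ is a disjoint union of disks possibly containing punctures. Since each $\gamma_{i}$ has a finite $f_{*}$-orbit, the union $\GG$ of these orbits is a finite, $f_{*}$-invariant, filling collection. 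After replacing $h^{n_{S}}$ by a suitable iterate $h^{Nn_{S}}$ I may assume each element of $\GG$ is preserved with its orientation, and then isotope so that $h^{Nn_{S}}$ is the identity on a regular neighborhood of $|\GG|$. On each complementary piece, a disk with finitely many punctures, the restriction fixes the boundary pointwise and merely permutes the punctures; a further power is isotopic to the identity rel boundary by the Alexander trick.

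The main obstacle I expect is excluding a residual Dehn twist factor about some $\gamma\in\GG$. A nontrivial power of a Dehn twist about $\gamma$ would send the isotopy class of any essential simple closed curve crossing $\gamma$ to infinitely many distinct classes, contradicting the finite-orbit conclusion of periodicity; for this one uses that transverse essential curves exist in $U$ because $\gamma$ is not boundary-parallel in $S$ (the components of $\bd S$ map into $|\Gamma|$, whereas $\gamma\ss U$). Combining the piecewise isotopies into a single global isotopy of $h^{Mn_{S}}$ to the identity on $S$, and checking that this is consistent across the orientation-reversing and M\"obius possibilities, are then routine matching arguments.
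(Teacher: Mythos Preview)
Your approach is correct and essentially the same as the paper's sketch: both pick a filling system of simple closed geodesics in $S$, use the periodicity hypothesis to find a power of $h^{n_{S}}$ fixing every curve in the system, and then apply the Alexander trick on the complementary disk pieces. The paper organizes the filling system as two pairwise-disjoint families $\Sigma,\Sigma'$ and cites Epstein's isotopy theorems explicitly for the straightening steps, while you pass to an $f_{*}$-invariant orbit collection and dispose of the residual-twist issue via the finite-orbit property; these are cosmetic differences in an otherwise identical strategy.
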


\begin{proof}[Sketch of proof]

First assume that $S$ is not planar with $b+c=3$.  It is elementary to construct two finite sets $\Sigma$ and $\Sigma'$ of pairwise disjoint, simple closed geodesics in $S$ such that the components of $S\sm(|\Sigma|\cup|\Sigma'|)$ consist of finitely many peripheral annuli, finitely many annular neighborhoods of cusps, and finitely many open disks.

Since $S$ is a periodic component, for each $\sigma\in\Sigma\cup\Sigma'$, there exists an integer $k>0$ such that $\sigma_{kn_{S}} = \sigma$. Thus, there exists an integer $n>0$ such that $\sigma_{nn_{S}} = \sigma$, all $\sigma\in\Sigma\cup\Sigma'$.  Denote $(h^{n_{S}}|S)^{n}$ by $g$.  Explicity construct an ambient  isotopy $\Phi:L\times[0,5]\to L$ such that $\Phi^{0} = \id$ and $\Phi^{5}\circ g = \id$ as follows,

\begin{enumerate}

\item Use~\cite[Theorems~2.1 and~3.3]{Epstein:isotopy}  and induction to  define $\Phi^{t}$, $t\in[0,1]$, moving $g(\sigma)$ to $\sigma$ for every   $\sigma\in\Sigma$\upn{;}   

\item Use the techniques of~\cite[Lemmas~4.58 and~4.60]{cc:hm}, to define $\Phi^{t}$, $t\in[1,2]$,   moving $\Phi^{1}\circ g(\sigma)$ to $\sigma$ for every   $\sigma\in\Sigma'$   while keeping each    $\sigma\in\Sigma$ fixed\upn{;}

\item It is elementary to define $\Phi^{t}$,  $t\in[2,3]$, supported in a neighborhood of $|\Sigma|\cup|\Sigma'|$ and fixing $|\Sigma|\cup|\Sigma'|$, such that   $\Phi^{3}\circ g = \id$ on $|\Sigma|\cup|\Sigma'|$\upn{;}

\item Use Alexander's trick to define $\Phi^{t}$, $t\in[3,4]$, so that $\Phi^{4}\circ g = \id$ on the simple connected components of $S\sm(|\Sigma|\cup|\Sigma'|)$\upn{;}

\item It is elementary to define $\Phi^{t}$,  $t\in[4,5]$, supported on the annular components of $S\sm(|\Sigma|\cup|\Sigma'|)$, so that $\Phi^{5}\circ g = \id$ on the annilar components of $S\sm(|\Sigma|\cup|\Sigma'|)$.

\end{enumerate}
Thus, $\Phi^{0}\circ (h^{n_{S}}|S)^{n} = (h^{n_{S}}|S)^{n} $ and $\Phi^{5}\circ (h^{n_{S}}|S)^{n} = \id$.


If $S$ is planar with $b+c=3$, the lemma is routine.
\end{proof}

\providecommand{\bysame}{\leavevmode\hbox to3em{\hrulefill}\thinspace}
\providecommand{\MR}{\relax\ifhmode\unskip\space\fi MR }
\providecommand{\MRhref}[2]{%
  \href{http://www.ams.org/mathscinet-getitem?mr=#1}{#2}
}
\providecommand{\href}[2]{#2}


\begin{thebibliography}{10}

\bibitem{cc:epstein}
J.~Cantwell and L.~Conlon,   \emph{Hyperbolic geometry and homotopic homeomorphisms of surfaces},
  Geom. Dedicata \textbf{177} (2015), 27--42.
  
 \bibitem{cc:hm}
 \bysame,  {Endperiodic automorphisms and foliations},
 arXiv:1006.4525v7.

\bibitem{bca}
A.~J. Casson and S.~A. Bleiler, \emph{Automorphisms of surfaces after {N}ielsen
  and {T}hurston}, Cambridge Univ. Press, Cambridge, 1988.

\bibitem{Epstein:isotopy}
D.~B.~A. Epstein, \emph{Curves on $2$-manifolds and isotopies}, Acta Math.
  \textbf{{\bf 115}} (1966), 83--107.

\bibitem{ha:th}
M.~Handel and W.~Thurston, \emph{New proofs of some results of {N}ielson}, Adv.
  in Math. \textbf{{\bf 56}} (1985), 173--191.


\end{thebibliography}
\end{document}